\newtheorem{theorem}{Theorem}[section]
\newtheorem{lemma}{Lemma}[section]
\newtheorem{corollary}{Corollary}[section]
\newtheorem{proposition}{Proposition}[section]
\newtheorem*{definition}{Definition}
\newtheorem*{theorem*}{Theorem}
\theoremstyle{definition}
\numberwithin{equation}{subsection}
\newcommand{\ignore}[1]{}
\newcommand{\mynote}[1]{}
\begin{document}
\setcounter{section}{0}
\title{\bf Automorphisms of Albert algebras and a conjecture of Tits and Weiss}
\author{Maneesh Thakur \\ \small {Indian Statistical Institute, 7-S.J.S. Sansanwal Marg} \\ \small {New Delhi 110016, India} \\ \small {e-mail: maneesh.thakur@
gmail.com}}
\date{}
\maketitle
\begin{abstract}
\noindent
\it{ Let $k$ be an arbitrary field. The main aim of this paper is to prove the Tits-Weiss conjecture for Albert division algebras over $k$ which are pure first Tits constructions. This conjecture asserts that for an Albert division algebra $A$ over a field $k$, every norm similarity of $A$ is inner modulo scalar multiplications. It is known that $k$-forms of $E_8$ with index $E^{78}_{8,2}$ and anisotropic kernel a 
strict inner $k$-form of $E_6$ correspond bijectively (via Moufang hexagons) to Albert division algebras over $k$. 
The Kneser-Tits problem for a form of $E_8$ as above is equivalent to the Tits-Weiss conjecture (see \cite{TW}). Hence we provide a solution to the Kneser-Tits problem for forms of $E_8$ arising from pure first Tits construction Albert division algebras. As an application, we prove that 
for $G={\bf Aut}(A),~G(k)/R=1$, where $A$ is a pure first construction Albert division algebra over $k$ and $R$ stands for $R$-equivalence in the sense of Manin (\cite{M}).}
\end{abstract}   
\section{\bf Introduction}
The main result of this paper is a solution to the Tits-Weiss conjecture for Albert division algebras over $k$, which are pure first Tits constructions. As a consequence, we solve the Kneser-Tits problem for $k$-forms of $E^{78}_{8,2}$, which have anisotropic kernel a strict inner $k$-form of $E_6$ associated to Albert division algebras which are pure first Tits constructions.  
We study some group theoretic properties of the structure group and the group of automorphisms of an Albert algebra over a given field, which we proceed to describe below briefly. 

Let $A$ be an Albert division algebra over $k$ and $G={\bf Aut}(A)$ denote the algebraic group of automorphisms of $A$. Then $G$ is defined and anisotropic over $k$, of type $F_4$. We classify connected reductive algebraic subgroups of $G$ (see Section 6). Let $L\subset A$ be a cubic subfield and $G_L$ be the algebraic subgroup of $G$ consisting of automorphisms of $A$ fixing $L$ pointwise. It is known that $G_L$ is an algebraic group defined over $k$, of type $D_4$ 
(see chapters (VIII), (IX) of \cite{KMRT}). We prove that $G_L$ is generated, as an algebraic group, by 
$G_{D_1}$ and $G_{D_2}$ for suitable $9$-dimensional subalgebras algebras $D_1$ and $D_2$ of $A$, containing $L$ as a maximal subfield (Proposition \ref{d1d2}), where, for a subalgebra $S\subset A,~G_S$ denotes the subgroup of $G$ consisting of automorphisms fixing $S$ pointwise.  
Let $\phi\in G(k)$ be irregular. We show that there is a $9$-dimensional subalgebra $B\subset A$ such that $\phi(B)=B$ (Theorem \ref{irreg}). 
We prove that every automorphism of an Albert division algebra $A$ fixes a cubic subfield of $A$ (Corollary \ref{fixedpointcor}) pointwise. 
If the fixed point subalgebra $L=A^{\phi}$ for an automorphism $\phi$ of $A$ 
is a noncyclic cubic extension of $k$, we prove $\phi$ is regular (Proposition \ref{regular}).  

Let $G$ be a semisimple, simply connected algebraic group, defined and isotropic over a field $k$. Let $G(k)$ denote the group of $k$-rational points of $G$ and 
$G(k)^+$ be the normal subgroup of $G(k)$, generated by the $k$-rational points of the unipotent radicals of the parabolic $k$-subgroups of $G$. The {\bf Kneser-Tits problem} asks if the quotient $W(k, G)=G(k)/{G(k)}^+$ is trivial (see \cite{PRap}, 7.2 and \cite{G} for a latest survey). 
In this paper we answer this question in the affirmative for a certain rank $2$ form of $E_8$. The form of $E_8$ that interests us has Tits index $E^{78}_{8,2}$, has anisotropic kernel a strict inner form of $E_6$. Such groups exist over $k$ if $k$ admits central division algebras of degree $3$ with nonsurjective reduced norm map or such algebras with unitary involutions over a quadratic extension of $k$ (see \cite{T1}). The Kneser-Tits problem for the form of $E_8$ mentioned above, has a reformulation in terms of the structure group of an Albert division algebra. This reformulation is due to J. Tits and R. Weiss, which made its first appearance in their book on Moufang polygons (see \cite{TW}, 37.41, 37.42 and page 418), in the form of a conjecture, henceforth referred to as the {\bf Tits-Weiss conjecture}. The assertion of the conjecture makes sense for {\it reduced} Albert algebras as well and follows rather easily from known results of Jacobson (see Theorem \ref{reduced}). Albert algebras are of profound importance in Lie theory and the theory of buildings. Exceptional groups of type $F_4, E_6, E_7$ and $E_8$ have close links with Albert algebras 
(see \cite{FF}, \cite{J1}, \cite{SV}, \cite{TW}). 
For example, over a 
base field $k$, all groups of type $F_4$ arise as groups of automorphisms of Albert algebras. Every Albert algebra $A$ over a field $k$ comes equipped with a cubic form called the {\bf norm form} of $A$. Also $A$ has a linear form defined on it, called its {\bf trace form}. Let $A$ be an Albert algebra over $k$, 
$\mathcal{N}$ be its norm and $\mathcal{T}$ be its trace. Recall that a similarity of $\mathcal{N}$ is a bijective linear map $\theta:A\rightarrow A$ 
such that $\mathcal{N}(\theta(x))=\alpha\mathcal{N}(x)$ for all $x\in A$, where $\alpha\in k^*$ is independent of $x$, 
called the factor of similitude of $\theta$.    
It is well known that the group of isometries of the norm form of $A$ is a simply connected group of type $E_6$ defined over $k$ (see for example \cite{SV}). The isotropy subgroup of $1\in A$ in the group of isometries is precisely the group of automorphisms of $A$.  
The conjecture of J. Tits and R. Weiss is about the group of similarities of the norm form $\mathcal{N}$ on $A$. It is clear that scalar multiplications 
$\mathcal{R}_t,~t\in k^*$ are norm similarities, since $\mathcal{N}(\mathcal{R}_t(x))=\mathcal{N}(tx)=t^3x$ for all $x\in A$. The right multiplications $\mathcal{R}_a,~a\in A^*$, by elements of $A$, however, are not similarities of the norm in general. The linear operators $U_a$ for $a\in A$ defined by 
$U_a=2\mathcal{R}_a^2-\mathcal{R}_{a^2}$ (these have a valid expression in characteristic $2$ as well, see Section 5) are norm similarities with similitude factor $\mathcal{N}(a)^2$ if $a$ is invertible in $A$. For an invertible element $u\in A$, the $u$-isotope of $A$ is the algebra $A^{(u)}$ whose underlying vector space structure is the same as $A$ but the multiplication on $A^{(u)}$ is defined as $x_uy=\{xuy\}$, where $\{xyz\}$ is the Jordan triple product in $A$, given by 
$$\{xyz\}=U_{x,z}(y),~~\\
U_{x,z}=\mathcal{R}_x\mathcal{R}_z+\mathcal{R}_z\mathcal{R}_x-\mathcal{R}_{xz},$$
$\mathcal{R}_x$ denotes the right multiplication on $A$ by $x$. Note that $U_a=U_{a,a}$. Two Albert algebras $A$ and $A'$ are said to be 
{\bf isotopic} if there is an isomorphism $\phi:A^{(u)}\longrightarrow A'$ for some invertible element $u\in A$. Such an isomorphism is called an {\bf isotopy} 
from $A$ to $A'$. Isotopy is an equivalence relation on the class of Albert algebras (see \cite{J1}, Sect. 7, Chap. VI). The set of all isotopies from $A\longrightarrow A$ forms a group 
under composition of maps, called the {\bf structure group} of $A$, denoted by $Str(A)$. It is known that isotopies are same as norm similarities as long as the base field is not too small (see \cite{J1}, Chap.VI, Theorem 6, Theorem 7). The subgroup of $Str(A)$ generated by the $U_a$, $a$ invertible, is called the {\bf inner structure group} of $A$, denoted by $Instr(A)$. One can prove that $Instr(A)$ is a normal subgroup of $Str(A)$ (see \cite{TW}, 37.42). Let $C$ be the subgroup of $Str(A)$ generated by the $\mathcal{R}_t,~t\in k^*$. Then $C.Instr(A)$ is normal in $Str(A)$. We can now state the conjecture of Tits and Weiss (see \cite{TW}, page 418):\\
\noindent
{\bf Conjecture (Tits-Weiss):} Let $A$ be an Albert division algebra over a field $k$. With the notation introduced above,
$$\frac{Str(A)}{C.Instr(A)}=\{1\}.$$
This conjecture has origins in the theory of Moufang polygons and is of fundamental importance to the subject. In this paper, 
we settle this conjecture for Albert division algebras which are pure first Tits constructions, thereby solving the Kneser-Tits problem for the $k$-forms of $E_8$ arising from such Albert division algebras. The elements of $Aut(A)\cap Instr(A)$ are called {\bf inner automorphisms} of $A$. We prove that every automorphism of an Albert division algebra which is a pure first Tits construction, is inner (Corollary \ref{pureinner}). 
The strategy to prove the above conjecture is to reduce the assertion about norm similarities to an assertion about automorphisms. 
The main step is to prove that every automorphism is inner. The key tool is a fixed point theorem for automorphisms of Albert algebras. We prove that every automorphism of an arbitrary Albert algebra $A$, fixes a nonzero element with trace zero. When the Albert algebra is a division algebra, this proves that every automorphism fixes a cubic subfield pointwise. We reprove a theorem of Jacobson on automorphisms of Cayley algebras and then use the same idea to derive the result on 
automorphisms of Albert division algebras. The analogy of Cayley algebras with Albert algebras is well known by the work of Petersson and Racine on 
Tits processes; our results strengthen this further. We apply our results on automorphisms of Albert division algebras to prove the triviality of yet another group, related to 
$R$-equivalence. Let $A$ be an Albert division algebra over $k$ which is a pure first construction and $G={\bf Aut}(A)$. We prove that $G(k)/R=1$, where $R$-stands for $R$-equivalence in algebraic groups (see \cite{M}).\\
\vskip1mm
\noindent
{\bf Groups with index $E^{78}_{8,2}$ :} Let $G$ be a semisimple simply connected group defined over $k$ with Tits index $E^{78}_{8,2}$ (see \cite{T3} for the index notation) and anisotropic kernel 
a strict inner form of $E_6$. Let $\Gamma$ denote the building associated to $G$, in the sense of (\cite{T2}) or (\cite{TW}). It is shown in (\cite{TW}) 
that $\Gamma$ is a Moufang hexagon, defined by a hexagonal system of type $27/F$, where $F/k$ is either a quadratic extension of $k$ or $F=k$. Recall from 
(\cite{TW}, 42.3.5, 42.6)  that such forms of $E_8$ exist if and only if $k$ admits Albert division algebras. Albert division algebras over $k$ exist if and only if there are degree $3$ central division algebras over $k$ with nonsurjective reduced norm maps, or $k$ admits degree $3$ central division algebras with unitary involution, over a quadratic extension of $k$, with nonsurjective reduced norm map. Hence these forms of $E_8$, for example, do not exist over finite fields, number fields, local fields or the field of real numbers, however the rational function field $\mathbb{Q}(t)$ admits these groups. 
In this context the following theorem of Albert is of importance (see \cite{J1}, Theorem 21, Chap. IX):
\begin{theorem} Let $k$ be a field such that there are degree $3$ central division algebras over $k$. Then there exist Albert division algebras over $k(t)$, the function field in one indeterminate. 
\end{theorem}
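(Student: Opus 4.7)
The plan is to use the first Tits construction applied to the base change $D_K := D \otimes_k K$, where $K = k(t)$, with parameter $\mu = t$, and to verify that the resulting Albert algebra is a division algebra. By hypothesis there is a degree $3$ central division algebra $D$ over $k$. I would first note that $D_K$ is still a degree $3$ central simple $K$-algebra and in fact remains a division algebra: since $k(t)/k$ is purely transcendental, any zero-divisor relation in $D_K$ could be cleared to one in $D[t]$ and then specialized to $k$, contradicting that $D$ is division. Equivalently, one can pass to the completion $\hat{K} := k((t))$ and note that $D \otimes_k \hat{K}$ is unramified over the henselian field $\hat{K}$ with residue algebra $D$, hence is itself a division algebra, forcing $D_K \hookrightarrow D \otimes_k \hat{K}$ to be division.

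Next, form the first Tits construction $J(D_K, t)$. By definition this is an Albert algebra over $K$, and by the standard criterion for first Tits constructions it is a division algebra precisely when $t$ is not a reduced norm from $D_K^*$. Thus the crux is the non-norm assertion $t \notin \mathrm{Nrd}(D_K^*)$. I would extract this from the $t$-adic valuation $v_t$ on $K$, normalized so that $v_t(t) = 1$. Since $D \otimes_k \hat{K}$ is an unramified degree $3$ central division algebra over $\hat{K}$, the valuation $v_t$ extends uniquely to a $\mathbb{Z}$-valued valuation $w$ on $D \otimes_k \hat{K}$, and in the unramified case a uniformizer of $\hat{K}$ is also a uniformizer of the division algebra. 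Consequently its reduced norm is a uniformizer of $\hat{K}$ times the cube of a unit, and more generally $v_t(\mathrm{Nrd}(x)) = 3 w(x)$ for every nonzero $x \in D \otimes_k \hat{K}$. Hence every reduced norm value in $\hat{K}^*$, and a fortiori in $K^*$, lies in $v_t^{-1}(3\mathbb{Z})$. Since $v_t(t) = 1$ is not a multiple of $3$, $t$ is not a reduced norm from $D_K$, and $J(D_K, t)$ is an Albert division algebra over $K = k(t)$.

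The main obstacle is the valuation-theoretic non-norm step: one needs existence and uniqueness of the extension of $v_t$ to a noncommutative central division algebra over a henselian field, together with the divisibility formula $v_t \circ \mathrm{Nrd} = 3 w$ in the unramified setting. These facts are standard in the theory of valued division algebras (and are precisely what underlies Albert's original argument), but must be applied over $\hat{K}$ rather than $K$; one then transfers the conclusion back to $K^* \subset \hat{K}^*$ using that the reduced norm is compatible with scalar extension. Everything else in the plan, including the Albert-algebra side of the first Tits construction, is formal once this arithmetic input is in place.
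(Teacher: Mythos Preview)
The paper does not supply its own proof of this theorem; it is quoted as a result of Albert with a reference to Jacobson (\cite{J1}, Chapter IX, Theorem 21), so there is no in-paper argument to compare against. Your approach---base-changing a degree $3$ central division algebra $D$ to $K=k(t)$, forming the first Tits construction $J(D_K,t)$, and showing $t\notin\mathrm{Nrd}(D_K^*)$ via the $t$-adic valuation on the completion $k((t))$---is correct and is essentially the classical argument. The key arithmetic point, that $D\otimes_k k((t))$ is an unramified division algebra over the henselian field $k((t))$ with residue algebra $D$, so that $v_t\circ\mathrm{Nrd}$ takes values in $3\mathbb{Z}$, is exactly right; one minor expository quibble is the sentence ``its reduced norm is a uniformizer of $\hat K$ times the cube of a unit,'' which is slightly garbled (for central $t$ one has $\mathrm{Nrd}(t)=t^3$), but the intended conclusion $v_t(\mathrm{Nrd}(x))=3w(x)\in 3\mathbb{Z}$ is valid and suffices.
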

Contrast this with the fact that $k=\mathbb{C}((x_1,\cdots,x_n)),~n\geq 3$, admits Albert division algebras (see \cite{PR1}), even though there are no central division algebras of degree $3$ over $\mathbb{C}$. \\
\noindent
{\bf Moufang hexagons of type $27/F$ :} Moufang hexagons of type $27/F$ correspond to Albert division algebras over $k$. Let $\Gamma$ be the Moufang hexagon associated to an Albert division algebra $A$ over $k$, as described in Chapter 15 of (\cite{TW}), or 4.7 of (\cite{T2}); see also 
(\cite{TH}) for an excellent introduction to the subject. Let $G$ be the group of ``linear'' automorphisms of $\Gamma$ (see \cite{TW} or \cite{W1} or \cite{W2}) and let $G^{\dagger}$ be the subgroup generated by the root groups of $\Gamma$. Then $G^{\dagger}$ is a normal subgroup of $G$ and the quotient $G/G^{\dagger}$ is isomorphic to $H/H^{\dagger}$, where $H$ is the structure group of the Albert algebra $A$ and $H^{\dagger}$ is the subgroup of $G$ generated by all maps of the form $x\mapsto tx,~t\in k^*$ or $x\mapsto U_b(x),~b\in A^*$; this is proved in 37.41 of (\cite{TW}). 
The group $G$ is the group of $k$ rational points of a an algebraic group defined over $k$ with index $E^{78}_{8,2}$, with anisotropic kernel $H$, 
the structure group of $A$. By 42.3.6 of (\cite{TW}), the root groups of $\Gamma$ are the groups $\mathcal{U}_{\alpha}(k)$, where $\alpha$ is a nondivisible root relative to a maximal $k$-split torus $S$ and $\mathcal{U}_{\alpha}$ is the corresponding unipotent $k$-group. This explains the Kneser-Tits problem in the context of the Tits-Weiss conjecture stated in (\cite{TW}, page 418) for the structure group of $A$.\\
\noindent
{\bf Structure of the paper :}
In the paper, the results that are needed to settle the Tits-Weiss conjecture, mostly from Section 5, are proved over arbitrary characteristics (one can work with cubic norm structures and use the formulae derived), however some other results on automorphisms in Section 6 may need omission of characteristic $2$ and $3$. For this reason and for simplicity of exposition, we choose to take the algebra point of view versus the cubic norm structure approach for Albert algebras and assume that characteristic of the base field is different from $2$ and $3$. We now describe the paper briefly. In Section 2, we introduce the notion of an Albert algebra and describe Tits constructions of such 
algebras, which will be used in the sequel. Section 3 gives preliminary material on Moufang hexagons of type $27/F$ and relates the Kneser-Tits problem to the Tits-Weiss conjecture. In Section 4, we take up the study of automorphisms of Albert algebras. The main tool in this is the fixed point subalgebra of an automorphism, or more generally, of a given (algebraic) subgroup of the algebraic group of automorphisms ${\bf Aut}(A)$ of a given Albert algebra $A$ over $k$. The results of Section 5 are oriented towards a proof of the Tits-Weiss conjecture and are valid over arbitrary characteristics. This section depends very little (except for the fixed point theorem) on Section 4. The key result being that every automorphism of an Albert division algebra, which is a pure first construction, is inner (i.e. is in the inner structure group of $A$ as defined in the introduction). This is proved step by step, while proving some results valid for general first constructions. We use the theorem of Wang on the reduced Whitehead group of division algebras (\cite{W}, \cite{T4}, Proposition 2.7) to settle the Tits-Weiss conjecture. We also derive some results for the norm similarities of first constructions, which help us in the proof. Using an analogue of Wang's result for the unitary Whitehead group of a division algebra, due to Yanchevskii, we prove that some automorphisms of a second Tits construction Albert algebra are inner (Cor. 5.2). We come back to the study of automorphisms in Section 6. We prove that irregular automorphisms of Albert division algebras are rather robust, we prove that such automorphisms stabilize $9$-dimensional subalgebras. We make some remarks on regular automorphisms as well. It seems likely that any automorphism is a product of automorphisms stabilizing $9$-dimensional subalgebras. The results of this section are general in 
nature, along the theme of Section 5. We apply the results of Section 5 to the $R$-triviality problem for groups of type $F_4$ over $k$, arising from pure first construction Albert division algebras. We conclude with some remarks, which may help in attacking the problem in its full glory.
\section{\bf Albert algebras}\label{albert}
In this section, we introduce some basic material on Albert algebras over a field $k$, assumed to be of characteristic not $2$ or $3$ for simplicity of exposition. As remarked before, one may work with cubic norm structures instead of algebras and the results continue to hold true. We assume some familiarity with the notion of an octonion (Cayley) algebra, which can be found in (\cite{SV}) for example. For a detailed account of Albert algebras and octonion algebras we refer to (\cite{J1}), (\cite{SV}) or (\cite{KMRT}).\\
\vskip1mm
\noindent
{\bf Reduced Albert algebras:} Let $C$ be an octonion algebra over $k$ and let $\Gamma=Diag(\gamma_1,\gamma_2,\gamma_3)\in GL(3,k)$ be a diagonal matrix. 
Let $x\mapsto\overline{x}$ denote the standard involution on $C$. Let $*:M_3(C)\rightarrow M_3(C)$ denote the involution $X\mapsto X^*=\Gamma^{-1}\overline{X}^t\Gamma$, where $M_3(C)$ denotes the (nonassociative) ring of $3\times 3$ matrices with entries from $C$ and $\overline{X}=(\overline{x_{ij}})$ for $X=(x_{ij})$. Let 
$\mathcal{H}(M_3(C),*)=\mathcal{H}_3(C,\Gamma)$ denote the space of hermitian (i.e. $*$-symmetric) matrices in $M_3(C)$. Then clearly $Dim_k(\mathcal{H}_3(C,\Gamma))=27$. We define a product on this space by $X\cdot Y=\frac{1}{2}(XY+YX)$. We thus get a $k$-algebra structure, called a {\it reduced} Albert algebra over $k$. These algebras have zero divisors. We call $A=\mathcal{H}_3(C,\Gamma)$ {\it split} if $C$ is split over $k$. It turns out that there is a unique split Albert algebra over a field $k$, up to isomorphism. An Albert algebra over $k$ can be defined as a $k$-algebra $A$ such that $A$ is isomorphic to the split Albert algebra over some field extension of $k$. An Albert algebra in which all non-zero elements are invertible, i.e. have their $U$-operators bijective, is called an Albert {\it division} algebra. The automorphism group of the split Albert algebra over $k$ is the split form of $F_4$ over $k$ (\cite{KMRT}, 25.13). The automorphism group of an Albert algebra is a $k$-anisotropic form of $F_4$ if and only if the algebra has no nonzero nilpotents (see \cite{PR5}, page 205). In particular, the automorphism group of an Albert division algebra is a $k$-anisotropic form of $F_4$. For a reduced Albert algebra, there is a natural notion of the {\bf trace}, namely, the trace of a matrix in a reduced Albert algebra $A$ is the sum of its diagonal entries (which are scalars) and is a 
linear form on $A$. Similarly, there is a {\bf norm} defined on $A$, which is the ``determinant'' computed in a suitable way, and is a cubic form on $A$. 
The notion of trace and norm passes down to an arbitrary Albert algebra over $k$. Tits has given two general constructions of all Albert algebras over a field $k$. It is known that any Albert algebra over $k$ comes from either of the two constructions. We shall give the norm and trace maps in the two constructions below. We now describe them briefly. \\
\noindent
{\bf Tits constructions of Albert algebras}\\
\noindent
{\bf The first construction:}\\
Let $k$ be a base field as fixed before. Let $D$ be a central simple (associative)  algebra over $k$ of degree $3$. We will denote by $D_+$ the (special) Jordan algebra structure on $D$, with multiplication $x\cdot y=\frac{1}{2}(xy+yx),~x,y\in D$. Let $\mu\in k^*$ be a 
scalar. To this data, one associates an Albert algebra $J(D,\mu)$ as follows:
$$J(D,\mu)=D_0\oplus D_1\oplus D_2,$$
where $D_i,~i=0,1,2$, is a copy of D. For the multiplication on $J(D,\mu)$, we need more notation. For $a,b\in D$ define
$$a\cdot b=\frac{1}{2}(ab+ba),~a\times b=2a\cdot b-t(a)b-t(b)a+(t(a)t(b)-t(a\cdot b))$$and $\widetilde{a}=\frac{1}{2}(t(a)-a)$, 
where $t:D\longrightarrow k$ is the reduced trace map of $D$. The multiplication on $J(D,\mu)$ is given by the formula :
$$(a_0,a_1,a_2)(b_0,b_1,b_2)~~~~~~~~~~~~~~~~~~~~~~~~~~~~~~~~~~~~~~~~~~~~~~~~~~~~~~~~~~~~~~~~~~~~~~~~~~~~~$$
$$=(a_0\cdot b_0+\widetilde{a_1b_2}+\widetilde{b_2a_2}, \widetilde{a_0}b_1+\widetilde{b_0}a_1+(2\mu)^{-1}a_2\times b_2,a_2\widetilde{b_0}
+b_2\widetilde{a_0}+\frac{1}{2}\mu a_1\times b_1).~~~~~~~~~~~$$ 
It is known that $J(D,\mu)$ is an Albert algebra over $k$ (see \cite{KMRT} or \cite{SV} for more details). Further, $J(D,\mu)$ is a {\bf division algebra} if and only if $\mu$ is not a reduced norm from $D$. Clearly $D_+$ is a subalgebra of $J(D,\mu)$. Let $A$ be an Albert algebra over $k$ and let $D_+\subset A$ for some degree $3$ central simple algebra $D$. Then there exists $\mu\in k^*$ such that $A\simeq J(D,\mu)$. \\
\noindent
{\bf Trace and Norm maps :}Let $A=J(D,\mu)$ be as defined above. Let $t=T_D$ and $n=N_D$ denote the reduced trace and reduced norm maps on $D$ (see \cite{J3}). The trace 
$T$ and the norm $N$ on $A$ are given by the formulae:
$$T(x,y,z)=t(x),~~N(x,y,z)=n(x)+\mu n(y)+\mu^{-1}n(z)-t(xyz).$$
From this, one gets an expression for the {\it trace bilinear form} on $A$, defined by, $T(x,y)=T(xy),~x,y\in A$. Therefore, one has, for $x=(x_0,x_1,x_2),~y=(y_0,y_1,y_2)$,  
$$T(x,y)=t(x_0y_0)+t(x_1y_2)+t(x_2y_1).$$
One knows that an Albert algebra $A$ is a division algebra if and only if its norm form is anisotropic over $k$. \\
\noindent
{\bf The adjoint map :}Let $A=J(D,\mu)$. One defines a quadratic map $\#:A\rightarrow A$ by $x\mapsto x^{\#}$, called the {\it adjoint} of $x$. Let $x=(x_0,x_1,x_2)$.  We define 
$$x^{\#}=(x_0^{\#}-x_1x_2,\mu^{-1}x_2^{\#}-x_0x_1,\mu x_1^{\#}-x_2x_0),$$
where, for $y\in D, 2y^{\#}=y\times y$. One can prove that $xx^{\#}=x^{\#}x=N(x)$ for all $x\in A$(see \cite{J1} for details). 
\vskip1mm
\noindent
{\bf The second construction:}\\
Let $K/k$ be a quadratic extension and let $(B,\tau)$ be a central simple $K$-algebra of degree $3$ over $K$ with a unitary involution $\tau$ over $K/k$. Let $u\in B^*$ be such that $\tau(u)=u$ and $N(u)=\mu\overline{\mu}$ for some $\mu\in K^*$, here bar denotes the nontrivial $k$-automorphism of $K$ and $N$ is the reduced norm map of $B$. Let $\mathcal{H}(B,\tau)$ be the special Jordan algebra structure on the $k$-vector subspace of $B$ of $\tau$-symmetric elements in $B$, with multiplication as in $B_+$. Let $J(B,\tau,u,\mu)=\mathcal{H}(B,\tau)
\oplus B$. With the notation introduced above, we define a multiplication on 
$J(B,\tau,u,\mu)$ by 
$$(a_0,a)(b_0,b)=(a_0\cdot b_0+\widetilde{au\tau(b)}+\widetilde{bu\tau(a)},
\widetilde{a_0}b+\widetilde{b_0}a+\overline{\mu}(\tau(a)\times\tau(b))u^{-1}).$$
Then $J(B,\tau,u,\mu)$ is an Albert algebra over $k$ and is a division algebra if and only if $\mu$ is not a reduced norm from $B$. Clearly $\mathcal{H}(B,\tau)$ is a subalgebra of $J(B,\tau,u,\mu)$. It is known that if $\mathcal{H}(B,\tau)$ is a subalgebra of an Albert algebra $A$ over $k$, then there are suitable parameters $u\in B^*$ and $\mu\in K^*$, where $K$ is the centre of $B$, such that $A\simeq J(B,\tau,u,\mu)$. \\
\noindent
{\bf Trace and Norm maps :}Let $A=J(B,\tau,u,\mu)$ be an Albert algebra arising from the second construction. Let $t=t_B$ and $n=N_B$ denote the reduced trace and reduced norm maps on $B$ (see \cite{J3}). The trace $T$ and the norm $N$ on $J(B,\tau,u,\mu)$ are given by the formulae:
$$T(b_0,b)=t(b_0),~~N(b_0,b)= n(b_0)+\mu n(b)+\overline{\mu}n(\tau(b))-t(b_0bu\tau(b)).$$ 
From this, one gets an expression for the trace bilinear form on $J(B,\tau,u,\mu)$, defined by $T(x,y)=T(xy),~x,y\in A$. Therefore, we have, for $x=(a_0,a),
y=(b_0,b)$,
$$T(x,y)=t(a_0b_0)+t(au\tau(b))+t(u\tau(a)b).$$
The Albert algebra $A=J(B,\tau,u,\mu)$ is a division algebra if and only if the norm form is anisotropic over $k$. \\
\noindent
{\bf The adjoint map :} Let $A=J(B,\tau,u,\mu)$ and $x=(a_0,a)$. In this case, the adjoint map is given by 
$$x^{\#}=(a_0^{\#}-au\tau(a),\overline{\mu}\tau(a)^{\#}u^{-1}-a_0a),$$
where for $y\in B,~2y^{\#}=y\times y$ as defined above. One has $N(x)=xx^{\#}=x^{\#}x$ for all $x\in A$. 
\vskip1mm
\noindent
{\bf Remarks :}It is known that all Albert algebras arise from these two constructions (see \cite{KMRT}) and these are not mutually exclusive. 
Hence there are Albert algebras which are of mixed type as well as of pure type. For simple recipe for pure second constructions, see (\cite{Th}) and for examples of pure first constructions, see (\cite{PR1}). Note that if $A$ is a pure first construction Albert division algebra then every $9$ dimensional subalgebra of $A$ must necessarily be of the form $D_+$ for a degree $3$ central division algebra $D$ over $k$. There is a cohomological characterization of pure second construction Albert algebras. An Albert algebra $A$ is a pure second construction if and only if $f_3(A)\neq 0$ (see \cite{KMRT}). However, such a characterization for pure first construction Albert algebras doesn't seem to be available in the literature. It is well known that any cubic subfield of an Albert division algebra reduces it, i.e., if $L\subset A$ is a cubic subfield, where $A$ is a division algebra over $k$, then $A\otimes L$ is reduced over $L$. Moreover, when $A$ is a first construction, every cubic subfield is a splitting field for $A$ (see \cite{PR2}). It can be shown that if $A$ is an Albert division algebra, ${\bf Aut}(A)$ remains anisotropic over any extension of $k$ of degree coprime with $3$.\\ 
\noindent
{\bf Pure first Tits construction Albert algebras :}Albert algebras which arise through the first Tits construction and cannot arise from the second construction are called pure first constructions. Let $A$ be an Albert algebra over $k$ and let $S\subset A$ be a $9$-dimensional subalgebra. It is known that $S=D_+$ or $S=\mathcal{H}(B,\tau)$ for $D$ a degree $3$ central simple algebra over $k$ or $B$ a degree $3$ central simple algebra over a quadratic extension $K/k$ with a unitary involution $\tau$. If $A$ is a pure first construction, then every $9$-dimensional subalgebra $S$ must be of the form $D_+$, by remarks made before on $9$ dimensional subalgebras of Albert algebras. Let $k_0$ be an algebraically closed field of characteristic not $2$ or $3$ and $n$ a positive integer. Then, by the work of Petersson and Racine (\cite{PR1}), every Albert division algebra over the iterated Laurent series field $k=k_0((x_1,\cdots,x_n)),~n\geq 3$, is a pure first construction. 

Following characterisation of pure first Tits construction Albert division algebras was communicated to us by Holger P. Petersson:
\begin{theorem}For an Albert division algebra over a field $k$ of arbitrary characteristic, the following conditions are equivalent.
\begin{enumerate}
\item[(i)] $A$ is a pure first Tits construction.
\item[(ii)] $A$ is a first Tits construction and every separable cubic subfield of $A$ is cyclic.
\item[(iii)] Every separable cubic subfield of any isotope of $A$ is cyclic. 
\end{enumerate}
\end{theorem}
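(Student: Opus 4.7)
The plan is to establish the chain $(iii) \Rightarrow (ii) \Rightarrow (i) \Rightarrow (iii)$, with the core content being the equivalence $(i) \Leftrightarrow (ii)$ obtained via the theory of $9$-dimensional subalgebras of $A$, and the jump from $(i)$ to $(iii)$ handled by showing that ``pure first Tits construction'' is an isotopy invariant. The implication $(iii) \Rightarrow (ii)$ is immediate on taking the trivial isotope $u=1$. For $(i) \Rightarrow (iii)$, I would invoke the classical fact that every isotope of a first Tits construction $J(D,\mu)$ is again a first Tits construction of the form $J(D,\mu')$ with $\mu' \in k^*$ (this is standard, cf.\ Chapter IX of \cite{J1} and the cubic-norm-structure formulation in \cite{KMRT}). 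Since the property of admitting a $\mathcal{H}(B,\tau)$ subalgebra is intrinsic to the cubic norm structure and so also an isotopy invariant, pure firstness is isotopy invariant; applying $(i) \Leftrightarrow (ii)$ to every isotope $A^{(u)}$ then gives $(iii)$.

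The heart of the argument is the equivalence $(i) \Leftrightarrow (ii)$. For $(ii) \Rightarrow (i)$, suppose $A$ is a first Tits construction but also a second Tits construction, so $A \cong J(B,\tau,u,\mu)$ for a degree $3$ central simple $K$-algebra $B$ with unitary involution $\tau$ over a quadratic extension $K/k$. Then $\mathcal{H}(B,\tau) \subset A$, and choosing a $\tau$-stable separable maximal subfield $M \subset B$ (which exists by standard involution theory) produces a separable cubic subfield $L := M^{\tau} \subset \mathcal{H}(B,\tau) \subset A$ with $M = L \otimes_k K$; the quadratic resolvent of $L$ is exactly the nontrivial extension $K$, so $L$ is non-cyclic, contradicting $(ii)$. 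Conversely, for $(i) \Rightarrow (ii)$, assume $A$ is pure first and suppose for contradiction that $L \subset A$ is a separable non-cyclic cubic subfield with quadratic resolvent $K/k$. The Albert algebra $A_K := A \otimes_k K$ is again a first Tits construction and contains the cyclic cubic extension $\widetilde{L} = L \otimes_k K$ of $K$. Using the semilinear action of $\mathrm{Gal}(K/k)$ on $A_K$ and the Petersson--Racine classification of $9$-dimensional subalgebras containing a prescribed cubic subfield, I would descend to produce a degree $3$ central simple $K$-subalgebra $B \subset A_K$ carrying a unitary involution $\tau$ over $K/k$, along with compatible parameters $u \in \mathcal{H}(B,\tau)^*$ and $\mu \in K^*$ with $N_B(u) = \mu \overline{\mu}$, identifying $A \cong J(B,\tau,u,\mu)$ as a second Tits construction. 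This contradicts pure firstness.

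The main obstacle will be the Galois-descent step in $(i) \Rightarrow (ii)$: it is not enough to exhibit the data $(B,\tau,u,\mu)$ at the level of $A_K$; one must verify that the resulting second-Tits-construction identification descends back to $A$ at the level of cubic norm structures. The delicate point is matching the Galois-fixed part of the $9$-dimensional $K$-subalgebra of $A_K$ with a genuine $k$-subalgebra of $A$ of the form $\mathcal{H}(B,\tau)$, and selecting $u$ and $\mu$ so that the multiplication formula for $J(B,\tau,u,\mu)$ agrees with that of $A$. The tools are the $9$-dimensional subalgebra structure theory from Chapter IX of \cite{KMRT} and the cubic-subfield correspondence of Petersson--Racine in \cite{PR2}; the remaining steps are either formal (the $(iii) \Rightarrow (ii)$ and $(ii) \Rightarrow (i)$ implications) or appeals to the well-known isotopy stability of the first Tits construction.
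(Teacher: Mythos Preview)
Your chain has two genuine gaps, both stemming from the same missed subtlety. First, $(iii)\Rightarrow(ii)$ is \emph{not} immediate: condition (ii) contains the clause ``$A$ is a first Tits construction,'' and taking the trivial isotope in (iii) gives you no information about this---you have not ruled out that $A$ is a pure second construction. Second, in your $(ii)\Rightarrow(i)$, the assertion that for a $\tau$-stable separable maximal subfield $M\subset B$ the fixed algebra $L=M^{\tau}$ automatically has quadratic resolvent equal to $K$ is false as stated: nothing prevents $L$ from being cyclic over $k$ (any cyclic cubic $L\subset\mathcal{H}(B,\tau)$ gives such an $M=L\otimes_k K$). What \emph{is} true, and what the paper uses, is that after replacing $\tau$ by $Int(u)\circ\tau$ so that the involution becomes distinguished, one can invoke \cite{P}, Theorem~3.1, to produce a separable cubic subfield of $\mathcal{H}(B,\tau)$ with discriminant algebra $K/k$, hence non-cyclic. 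This passage to an isotope is precisely why the paper runs the implication as $(iii)\Rightarrow(i)$ rather than $(ii)\Rightarrow(i)$: the non-cyclic cubic subfield is found only in a suitable isotope, not necessarily in $A$ itself, so the hypothesis of (iii) is genuinely needed at that point.

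Your proposed $(i)\Rightarrow(ii)$ via Galois descent along $K/k$ is also far more elaborate than necessary, and the ``main obstacle'' you identify is in fact avoidable entirely. The paper's argument is two lines: given separable cubic $L\subset A$, extend the inclusion to an embedding $J(L,\lambda)\hookrightarrow A$ for some $\lambda\in k^*$ by \cite{PR2}, Cor.~4.5; since $A$ is a pure first construction, this $9$-dimensional subalgebra must be of the form $D_+$ for some degree-$3$ central division algebra $D$ over $k$, and by \cite{PR6}, Prop.~5.1 this forces $L/k$ to be cyclic. No base change, no construction of $(B,\tau,u,\mu)$, and no descent of cubic norm structures is required.
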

\begin{proof} (i) implies (ii). Suppose $A$ is a pure first Tits construction and let $L\subset A$ be a separable cubic subfield. By (\cite{PR2}), Cor. 4.5, there exists $\lambda\in k^*$ such that the inclusion $L\hookrightarrow A$ extends to an embedding from the first Tits construction $J=J(L,\lambda)$ to $A$. By the hypothesis, $J$ has the form $D_+$ for some central associative division algebra $D$ of degree $3$ over $k$. But by (Prop. 5.1, \cite{PR6}), this forces the cubic extension $L/k$ to be cyclic.\\
\noindent
(ii) implies (iii). If $A$ is a first Tits construction, then by (\cite{PR2}, Cor. 4.9) all its isotopes are isomorphic. Hence (iii) is a trivial consequence 
of (ii).\\
\noindent
(iii) implies (i). Arguing indirectly, let us suppose that (iii) holds, but $A$ is not a pure first Tits construction. Then there exists a central associative division algebra $(B,\tau)$ of degree $3$ with an involution of second kind over $k$ such that $\mathcal{H}(B,\tau)$ is a subalgebra of $A$ and $K$, the center of $B$, is a separable quadratic field extension of $k$. Passing to an appropriate isotope of $\mathcal{H}(B,\tau)$, equivalently, replacing $\tau$ by $Int(u)\circ\tau$ for some invertible $u\in \mathcal{H}(B,\tau)$, we may assume that the involution $\tau$ is distinguished (see \cite{P}, Theorem 2.10). But then, by loc.cit. Theorem 3.1, $\mathcal{H}(B,\tau)$, hence $A$, contains a separable cubic subfield with discriminant algebra $K/k$, which therefore cannot be cyclic.
\end{proof}
\noindent
{\bf The structure group of Albert algebras :}Recall that every Albert algebra comes equipped with a cubic form $\mathcal{N}$, called the norm. The isometries of 
$\mathcal{N}$ form the $k$-rational points of a simply connected $k$-algebraic group of type $E_6$. This group contains the algebraic group ${\bf Aut}(A)$ of automorphisms of $A$, we denote its group of $k$-rational points by $Aut(A)$. The group of similarities of $\mathcal{N}$ is called the {\bf structure group} of $A$ and we denote it by $Str(A)$. This coincides with the group of $k$-rational points of a strict inner $k$-form of $E_6$, which we may denote by ${\bf Str}(A)$. Let $a\in A$ and $\mathcal{R}_a$ denote the right multiplication by $a$ acting on $A$. We let $U_a=2\mathcal{R}_a^2-\mathcal{R}_{a^2}$. Then $U_a\in Str(A)$ for all invertible $a\in A$. The (normal) subgroup generated by $U_a,~a$ invertible, is called the {\bf Inner structure group} of $A$ and denoted by $Instr(A)$, this also is the group of $k$-points of a certain algebraic group 
${\bf Instr(A)}$. The group $Aut(A)\cap Instr(A)$ is called the group of {\bf inner automorphisms} of $A$. Finally, recall that a norm isometry is an automorphism if and only if it fixes the identity element of $A$ (see \cite{J1}, Section 7, Chapter VI). 
\section{\bf Moufang Hexagons of type $27/F$}
In this section, we explain the construction of Moufang hexagons of type $27/F$ and connect them to the Kneser-Tits problem as well as the Tits-Weiss conjecture 
mentioned in the introduction. We reproduce below some of the material from (\cite{TH}) for our exposition. Let $A$ be an Albert division algebra over $k$. Let $U_1, U_3, U_5$ be three groups isomorphic to the additive group $(A,+)$ and let $U_2,U_4,U_6$ be three groups isomorphic to the additive group $(k,+)$. Let $x_i$ denote the isomorphism of 
$(A,+)$ or $(k,+)$ with $U_i$. We define a group $U_+$, generated by the $U_i$ subject to the commutation relations as follows (see \cite{TW} 8.13): 
$$[U_1,U_2]= [U_2,U_3]=[U_3,U_4]=[U_4,U_5]=[U_5,U_6]=1,$$
$$[U_2,U_4] = [U_4,U_6]=1,$$
$$[U_1,U_4]  =  [U_2,U_5]=[U_3,U_6]=1,$$
$$[x_1(a),x_3(b)]  =  x_2(T(a,b)),$$
$$[x_3(a),x_5(b)] = x_4(T(a,b)),$$
$$[x_1(a),x_5(b)]  =  x_2(-T(a^{\#},b))x_3(a\times b)x_4(T(a,b^{\#})),$$
$$[x_2(t),x_6(u)]  =  x_4(tu),$$
$$[x_1(a),x_6(t)]  =  x_2(-tN(a))x_3(ta^{\#})x_4(t^2N(a))x_5(-ta),$$
for all $a,b\in A$ and all $t,u\in k$. We construct a graph $\Gamma$ from this data as follows: Let $\phi$ be a map from $\{1,2,\cdots,6\}$ to the set of subgroups of $U_+$ defined by :
$$\phi(i)=U_{[1,i]},~1\leq i\leq 3,~~\phi(i)=U_{[i-3,3]}, 4\leq i\leq 6,$$
where 
$$ U_{[i,j]}=<U_i, U_{i+1},\cdots, U_j>,~~i\leq j< i+n;~~U_{[i,j]}=1~\rm{otherwise}.$$
Let the vertex set of $\Gamma$ be defined by 
$$V(\Gamma)=\{(i,\phi(i)g|1\leq i\leq 6,~g\in U_+\},$$
where $\phi(i)g$ is the right coset of $\phi(i)$ containing $g$. The edge set of $\Gamma$ is defined by 
$$E(\Gamma)=\{((i,R),(j,S))|~|i-j|=1,~R\cap S\neq\emptyset\},$$
here $|i-j|$ is computed modulo $6$. This gives a graph $\Gamma=(V(\Gamma), E(\Gamma))$, which is completely determined (up to isomorphism) by the $4$-tuple $(U_+,U_1,U_2,U_3)$. The graph $\Gamma$ is the building associated to the $k$-algebraic group with index $E^{78}_{8,2}$ and anisotropic kernel the strict inner 
$k$-form of $E_6$ corresponding to the structure group of $A$. This graph is called a Moufang hexagon of type $27/F$, where $F=k$ if $A$ is a first construction and $F=K/k$, a quadratic extension of $k$ if $A$ is a second construction. Let $G$ be the group of ``linear'' automorphisms of $\Gamma$. The linear automorphisms of $\Gamma$ are those which are induced by the linear elements of the structure group of the Albert algebra $A$. Then $G$ is the group of $k$-rational points of the 
$k$-form of $E_8$ described above. Let $G^{\dagger}$ denote the subgroup of $G$ generated by the ``root groups'' of $\Gamma$ (see \cite{TW} for definition). Then 
$G/G^{\dagger}\simeq H/H^{\dagger}$, where $H$ is the structure group of $A$ and $H^{\dagger}$ the subgroup of $H$ generated by the maps $\mathcal{R}_t,t\in k^*$ and $U_a,~a\in A^*$ (see 37.41 of \cite{TW}). By (\cite{TW}, 42.3.6), the root groups of $\Gamma$ are the groups $\mathcal{U}_{\alpha}(k)$, where $\alpha$ is a nondivisible root corresponding to a maximal split $k$-torus $S$ in the relative rank $2$ form of $E_8$ described above and $\mathcal{U}_{\alpha}$ is the unipotent group corresponding to $\alpha$. Hence if ${\bf G}$ denote the $k$-form of $E_8$ in question, with ${\bf G}(k)=G$, we have 
$${\bf G}(k)/{\bf U}(k)\simeq G/G^{\dagger}\simeq H/H^{\dagger},$$
where ${\bf U}(k)$ denotes the subgroup of ${\bf G}(k)=G$ generated by the $k$-rational points of the unipotent radicals of parabolic $k$-subgroups of ${\bf G}$.  The Kneser-Tits problem is to determine if the quotient ${\bf G}(k)/{\bf U}(k)$ is trivial. The Tits-Weiss conjecture asserts that the quotient $G/G^{\dagger}$ is trivial.   

\section{\bf Automorphisms of Albert algebras}\label{autos1}
In this section, we prove some results about automorphisms of Albert algebras in general. Some of these are intended for the proof of the Tits-Weiss conjecture, but others may be of independent interest. Let $A$ be an Albert algebra over $k$ and let $A_0$ denote the subspace of elements of $A$ having zero trace. We shall denote the algebraic group of automorphisms of $A$ by ${\bf Aut}(A)$ and $Aut(A)$ will denote its group of $k$-rational points. Also, for a subalgebra $S\subset A$, 
$G_S={\bf Aut}(A/S)$ will denote the algebraic subgroup of ${\bf Aut}(A)$ consisting of automorphisms fixing $S$ pointwise. For a central division algebra $D$ over $k$, we will denote by ${\bf SL}(1,D)$ the algebraic group of norm $1$ elements in $D$ and $SL(1,D)$ will denote its group of $k$ rational points. We need descriptions of certain subgroups of ${\bf Aut}(A)$ for our purpose, we include it here (see \cite{KMRT}), Chapter IX, Section 39). We have,
\begin{proposition} Let $A=\mathcal{H}_3(C,\Gamma)$ be a reduced Albert algebra over $k$ and let $H\subset {\bf Aut}(A)$ be the algebraic subgroup of ${\bf Aut}(A)$ consisting of all automorphisms of $A$ which fix the three diagonal idempotents in $A$. Then $H$ is a $k$-subgroup of type $D_4$, in fact $H\simeq {\bf Spin}(n_C)$, where $n_C$ is the norm on the octonion algebra $C$. 
\end{proposition}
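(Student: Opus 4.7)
The strategy is to analyze $H$ through the Peirce decomposition of $A=\mathcal{H}_3(C,\Gamma)$ with respect to the three orthogonal primitive idempotents $E_i = e_{ii}$, $i=1,2,3$. One has
$$A = kE_1\oplus kE_2\oplus kE_3\oplus A_{12}\oplus A_{13}\oplus A_{23},$$
with each off-diagonal component
$$A_{ij}=\{c[ij] := c\,e_{ij}+\gamma_j^{-1}\gamma_i\,\bar c\,e_{ji}\mid c\in C\}$$
canonically $k$-linearly identified with $C$ via $c\mapsto c[ij]$. A direct matrix computation, using $X\cdot Y=\tfrac{1}{2}(XY+YX)$, yields the Peirce product formulas
$$c[ij]\cdot c'[jk]=\tfrac{1}{2}\,(cc')[ik],\qquad c[ij]\cdot c[ij]=\gamma_j^{-1}\gamma_i\,n_C(c)\,(E_i+E_j),$$
for distinct $i,j,k$ and all $c,c'\in C$.

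Any $\phi\in H$ fixes $E_1,E_2,E_3$ pointwise, hence preserves each Peirce summand and acts as the identity on the diagonal part. Thus $\phi$ is completely determined by three $k$-linear maps $t_{ij}:C\to C$ defined by $\phi(c[ij])=t_{ij}(c)[ij]$. Applying $\phi$ to the two identities just displayed and comparing terms translates the multiplicativity of $\phi$ into the relations
$$t_{ij}(c)\,t_{jk}(c')=t_{ik}(cc'),\qquad n_C\bigl(t_{ij}(c)\bigr)=n_C(c),$$
valid for all $c,c'\in C$ and all distinct $i,j,k\in\{1,2,3\}$. Conversely, any such triple of $k$-linear maps extends, by $k$-linearity and the decomposition above, to a unique element of $H$.

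These relations are exactly the defining identities of the group of \emph{related triples} of isometries of the $8$-dimensional quadratic space $(C,n_C)$. By the local principle of triality (see \cite{KMRT}, Chap. VIII, or \cite{SV}, Chap. 3), this group is canonically isomorphic to $\mathbf{Spin}(n_C)$, the three projections onto the individual factors realizing the vector representation together with the two half-spin representations. The assignment $\phi\mapsto(t_{12},t_{23},t_{13})$ is given by polynomial formulas in the matrix entries of $\phi$, and is therefore functorial in commutative $k$-algebras; this upgrades the identification of $k$-points to an isomorphism $H\simeq\mathbf{Spin}(n_C)$ of $k$-algebraic groups. In particular $H$ is of type $D_4$, as asserted.

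The main point requiring care is the bookkeeping in the Peirce calculation: the placement of the diagonal parameters $\gamma_i$ and of the octonion conjugation $c\mapsto\bar c$ in the formula for $c[ij]$ must be tracked carefully so that the multiplicativity of $\phi$ converts into the \emph{standard} symmetric triality relation rather than into a variant involving explicit twists. Once this is pinned down, the classical triality principle closes the argument without further work, and functoriality of the construction takes care of the passage from $k$-points to algebraic groups.
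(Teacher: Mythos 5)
Your proof is correct; note, however, that the paper itself gives no argument for this proposition and simply refers to \cite{KMRT}, Chapter IX, Section 39, where exactly your argument appears: the Peirce decomposition relative to the diagonal frame, the reduction of an idempotent-fixing automorphism to a triple $(t_{12},t_{23},t_{13})$ of norm isometries satisfying the triality relation, and the identification of the group of related triples with $\mathbf{Spin}(n_C)$. So you have in effect reconstructed the cited proof rather than found a different route, and the one point genuinely requiring care --- normalizing away the $\gamma_i$'s and the conjugations so that the relations become the standard symmetric triality identities --- is exactly the point you flag.
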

We will denote the subgroup in the proposition by ${\bf Spin}(8)$ when the norm form $n_C$ of $C$ is not important in the discussion. 
\begin{corollary} Let $A$ be an Albert algebra over $k$ and $L\subset A$ be a cubic \'{e}tale subalgebra of $A$. Then ${\bf Aut}(A/L)$ is a $k$-subgroup of ${\bf Aut}(A)$ of type $D_4$. 
\end{corollary}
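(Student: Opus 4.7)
The plan is to pass to a splitting field of $L$ and invoke Proposition 4.1. First, note that $\mathbf{Aut}(A/L)$ is a well-defined closed $k$-subgroup of $\mathbf{Aut}(A)$: it is the (scheme-theoretic) pointwise stabilizer of the finite-dimensional $k$-subspace $L\subset A$ under the natural $k$-action of $\mathbf{Aut}(A)$ on $A$. Hence the only substantive content of the corollary is the determination of the absolute type.

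Since $L$ is a cubic \'etale $k$-algebra, choose a finite Galois extension $k'/k$ splitting $L$, so that $L_{k'}:=L\otimes_k k'\simeq k'\times k'\times k'$. Then $L_{k'}$ has a canonical basis of three orthogonal primitive idempotents $e_1,e_2,e_3\in A_{k'}$ with $e_1+e_2+e_3=1_A$. By Jacobson's coordinatization theorem for Albert algebras (see \cite{J1}, and Chapter IX of \cite{KMRT}), any Albert algebra containing such a complete frame of three mutually orthogonal primitive idempotents admits a reduced presentation with these idempotents as the diagonal ones; hence $A_{k'}\cong \mathcal{H}_3(C',\Gamma')$ for some octonion algebra $C'$ over $k'$ and some diagonal $\Gamma'\in GL_3(k')$, with the $e_i$ identified with the three diagonal matrix units. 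In this presentation, an automorphism of $A_{k'}$ fixes $L_{k'}$ pointwise if and only if it fixes each $e_i$, since $1,e_1,e_2$ form a $k'$-basis of $L_{k'}$.

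Therefore $\mathbf{Aut}(A/L)_{k'} = \mathbf{Aut}(A_{k'}/L_{k'})$ coincides with the subgroup of $\mathbf{Aut}(A_{k'})$ described in Proposition 4.1, which by that proposition is isomorphic to $\mathbf{Spin}(n_{C'})$, a simply connected group of type $D_4$ over $k'$. Since the absolute type (as well as smoothness and connectedness) of an algebraic $k$-group is invariant under base change, $\mathbf{Aut}(A/L)$ is itself a connected reductive $k$-group of type $D_4$, giving the claim. The single nontrivial step is the application of the coordinatization theorem over $k'$; everything else is descent bookkeeping.
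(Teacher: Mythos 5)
Your argument is correct and is precisely the derivation the paper intends: the corollary is stated without proof immediately after Proposition 4.1 (with a pointer to \cite{KMRT}, Ch.~IX), and the standard route is exactly yours --- split $L$ over a Galois extension to obtain a complete orthogonal system of primitive idempotents, coordinatize $A_{k'}$ as $\mathcal{H}_3(C',\Gamma')$ with those idempotents on the diagonal, identify $\mathbf{Aut}(A/L)_{k'}$ with the group of Proposition 4.1, and descend the type. No gaps; the coordinatization step is the only substantive input, as you note.
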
 
\begin{proposition}Let $A$ be an Albert algebra arising from the first construction. Let $D_+\subset A$. Then, with the notation introduced above, 
${\bf Aut}(A/D_+)$ is a $k$-subgroup of ${\bf Aut}(A)$, isomorphic to ${\bf SL}(1,D)$ over $k$.
\end{proposition}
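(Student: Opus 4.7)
The plan is to define an explicit morphism $\rho: \mathbf{SL}(1,D) \to \mathbf{Aut}(A/D_+)$ of $k$-algebraic groups and show it is an isomorphism. For $c \in D^{\times}$ with $N_D(c) = 1$, set
\[
\phi_c(x_0, x_1, x_2) = (x_0, x_1 c, c^{-1} x_2).
\]
The first step is to verify that $\phi_c$ is a $k$-algebra automorphism of $A = J(D,\mu)$ fixing $D_+$ pointwise. The key identities
\[
(xc)^{\#} = c^{-1} x^{\#}, \qquad (c^{-1} x)^{\#} = x^{\#} c
\]
follow from $N_D(c) = 1$ together with $x x^{\#} = N_D(x)$ in $D$; polarizing gives
\[
(x_1 c) \times (y_1 c) = c^{-1}(x_1 \times y_1), \qquad (c^{-1} x_2) \times (c^{-1} y_2) = (x_2 \times y_2) c,
\]
which are precisely what the middle and third components of the Tits product formula demand for $\phi_c$-equivariance. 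Cyclicity of the reduced trace, $t(x_0 (x_1 c)(c^{-1} x_2)) = t(x_0 x_1 x_2)$, secures preservation of the Albert norm. Since the formulas defining $\phi_c$ are polynomial in the coordinates of $c$, the assignment $c \mapsto \phi_c$ gives a morphism of $k$-group schemes $\rho$, and one checks directly from the expression that it is a homomorphism.

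Injectivity of $\rho$ is immediate, since $\phi_c(0, 1, 0) = (0, c, 0)$. For surjectivity it suffices to work at the level of $\overline{k}$-points by Galois descent. Let $\psi \in \mathbf{Aut}(A/D_+)(\overline{k})$. The product identities
\[
(d, 0, 0)(0, x_1, 0) = (0, \widetilde{d} x_1, 0), \qquad (d, 0, 0)(0, 0, x_2) = (0, 0, x_2 \widetilde{d})
\]
realize $A = D_0 \oplus D_1 \oplus D_2$ as a $\mathbb{Z}/3$-graded algebra with each $D_i$ a $D_+$-submodule. Since $\psi$ commutes with multiplication by $D_+$ and preserves the Albert norm (whose restriction is $\mu \, n_D$ on $D_1$ and $\mu^{-1} n_D$ on $D_2$), one shows that $\psi$ preserves each graded component; a hypothetical swap $D_1 \leftrightarrow D_2$ would require a similitude of $n_D$ on $D$ with factor $\mu^{2}$, which is excluded whenever $A$ is a division algebra by the observation that $k^*/N_D(D^*)$ has exponent dividing $3$, and is handled by a direct check in the remaining case. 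Once the grading is preserved, $\psi|_{D_1}$ is a $k$-linear self-map of $D \cong D_1$ commuting with left multiplication by $D$ (using that $d \mapsto \widetilde{d}$ is a $k$-linear bijection of $D$), hence is right multiplication by a unique $c \in D^{\times}$. Matching the product $(0, x_1, 0)(0, 0, x_2) = (\widetilde{x_1 x_2}, 0, 0)$ forces $\psi|_{D_2}(x_2) = c^{-1} x_2$, and preservation of the cubic norm finally imposes $N_D(c) = 1$. Hence $\psi = \phi_c$.

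The main obstacle will be the grading-preservation step: ruling out the potential swap of $D_1$ and $D_2$ by an automorphism fixing $D_+$. Once this is settled, the remainder of the argument is a mechanical consequence of the adjoint identities in $D$ and the Tits construction multiplication formulas. Because $\rho$ is defined by polynomial formulas over $k$, the isomorphism of algebraic groups descends to $k$ automatically, establishing the proposition.
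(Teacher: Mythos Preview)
The paper does not prove this proposition; it is stated as a known fact with a reference to \cite{KMRT}, Chapter~IX, Section~39.B. So there is no in-paper argument to compare against, and your proposal is effectively supplying a proof where the paper defers to a source.

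Your strategy is sound and most of the details are correct, but the grading-preservation step has a genuine gap. You declare that surjectivity will be checked on $\overline{k}$-points, and then rule out a swap $D_1\leftrightarrow D_2$ by invoking ``$A$ is a division algebra'' together with a norm obstruction in $k^*/N_D(D^*)$. Over $\overline{k}$ the algebra $A$ is \emph{never} a division algebra and every scalar is a reduced norm, so this case is vacuous; the entire content therefore lies in what you call ``a direct check in the remaining case'', which you do not carry out. As written, the argument does not exclude mixing of $D_1$ and $D_2$ over $\overline{k}$.

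The fix is short and uses exactly the $D_+$-module structure you already set up. Write $\psi(0,x,0)=(0,a(x),b(x))$. Equivariance under Jordan multiplication by $D_0$ and the bijection $d\mapsto\widetilde{d}$ give $a(dx)=d\,a(x)$ and $b(dx)=b(x)\,d$ for all $d,x\in D$. Setting $x=1$ in the second relation yields $b(d)=b(1)\,d$, hence $b(1)(dx-xd)=0$ for all $d,x$; since $D\otimes\overline{k}\cong M_3(\overline{k})$ is noncommutative and $dx-xd$ can be taken invertible, $b(1)=0$ and so $b=0$. The same reasoning kills the component $D_2\to D_1$. Thus $\psi$ preserves the grading with no appeal to norms or to $A$ being division, and the rest of your argument goes through. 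Alternatively, once injectivity of $\rho$ is known, a dimension count (both groups are $8$-dimensional and smooth, and $\mathbf{Aut}(A/D_+)_{\overline{k}}$ is connected by the cited result in \cite{KMRT}) yields the isomorphism without any explicit surjectivity computation.
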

\noindent
We now can prove our first key result, the proof rests on the conjugacy theorem for maximal tori in algebraic groups. Recall that for an Albert algebra $A$ the subspace of trace zero elements is denoted by $A_0$. 
\begin{theorem}{\label{fixedpoint}}Let $\phi\in Aut(A)$. There exists a nonzero element $x_0\in A_0$ such that $\phi(x_0)=x_0$. 
\end{theorem}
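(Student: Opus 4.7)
The plan is to use the Jordan decomposition of $\phi$ together with the fact that the $26$-dimensional representation of $\mathbf{Aut}(A)$ on $A_0$ --- being the fundamental representation of a group of type $F_4$ --- has zero weight of multiplicity two. Since the fixed subspace of $\phi$ on $A_0$ is $\ker(\phi-\mathrm{id})|_{A_0}$, its $k$-dimension equals its $\bar{k}$-dimension, so it suffices to exhibit a nonzero $\phi$-fixed element of $A_0 \otimes_k \bar{k}$.

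Working over $\bar{k}$, I would write the Jordan decomposition $\phi = \phi_s \phi_u$, with $\phi_s$ semisimple, $\phi_u$ unipotent, and $\phi_s \phi_u = \phi_u \phi_s$. By functoriality of Jordan decomposition for the closed immersion $\mathbf{Aut}(A) \hookrightarrow \mathrm{GL}(A)$, both components lie in $\mathbf{Aut}(A)(\bar{k})$. The semisimple element $\phi_s$ is contained in some maximal torus $T \subset \mathbf{Aut}(A)_{\bar{k}}$, and by the conjugacy theorem for maximal tori, $T$ is a conjugate of the diagonal maximal torus in the split realization $A \otimes \bar{k} \cong \mathcal{H}_3(C_s, \Gamma)$ (with $C_s$ the split octonion algebra). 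That diagonal torus pointwise fixes the span of the three primitive diagonal idempotents $E_1, E_2, E_3$, and intersecting this span with the trace-zero subspace produces a two-dimensional $T$-fixed subspace $V_0 \subset A_0 \otimes \bar{k}$ on which $\phi_s$ acts as the identity.

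Since $\phi_u$ commutes with $\phi_s$, it preserves $V_0$. Its restriction to $V_0$ is a unipotent operator on a nonzero finite-dimensional $\bar{k}$-vector space and therefore fixes some nonzero vector $x_0 \in V_0$. Then $\phi(x_0) = \phi_s(\phi_u(x_0)) = \phi_s(x_0) = x_0$ is the required fixed vector over $\bar{k}$; the dimension comparison from the opening paragraph descends this to a nonzero fixed element of $A_0$ over $k$.

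The main obstacle is essentially bookkeeping rather than conceptual depth: one must record the multiplicity of the zero weight of the $26$-dimensional representation (most transparent from the three diagonal idempotents), verify that the Jordan components of a $k$-algebra automorphism remain algebra automorphisms, and handle a possibly non-perfect base field by performing the Jordan decomposition only after passing to $\bar{k}$ and relying on descent purely for the final dimension count of $\ker(\phi-\mathrm{id})|_{A_0}$.
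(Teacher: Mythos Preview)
Your approach is essentially the paper's: reduce to $\bar{k}$ via the dimension of $\ker(\phi-\mathrm{id})|_{A_0}$, take the Jordan decomposition, put $\phi_s$ in a maximal torus, and use that any maximal torus of $\mathbf{Aut}(A)_{\bar{k}}$ fixes a $2$-dimensional subspace of $A_0\otimes\bar{k}$ (the zero-weight space of the $26$-dimensional representation, realized concretely via the diagonal idempotents). The paper compresses the unipotent step into ``we may assume $\phi$ is semisimple,'' while you spell it out.

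There is one small slip to repair. From ``$\phi_u$ commutes with $\phi_s$'' you infer that $\phi_u$ preserves $V_0$, but $V_0$ is the \emph{$T$-fixed} subspace, not a priori the full $1$-eigenspace of $\phi_s$; commuting with $\phi_s$ only forces $\phi_u$ to preserve the latter. The fix is immediate: set $W=\ker(\phi_s-\mathrm{id})|_{A_0\otimes\bar{k}}$; then $V_0\subset W$ so $W\neq 0$, $\phi_u$ preserves $W$, and being unipotent it fixes a nonzero vector there. Equivalently, once $1$ is an eigenvalue of $\phi_s$ it is an eigenvalue of $\phi=\phi_s\phi_u$, since on $W$ the operator $\phi$ coincides with the unipotent $\phi_u$.
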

\begin{proof}Since $\phi$ is an automorphism, we have $\phi(A_0)=A_0$. To prove the assertion, it suffices to prove that $1$ is an eigenvalue for the restriction of $\phi$ to $A_0$. To prove the assertion on eigenvalues of $\phi$, we may assume $k$ is algebraically closed. We may, by considering the Jordan decomposition of $\phi$, assume that $\phi$ is semisimple. We can write $A=\mathcal{H}_3(\mathcal{C},1)$, for the split Cayley algebra $\mathcal{C}$ over $k$. Let $T\subset G={\bf Aut}(A)$ be a maximal torus containing $\phi$. Recall that the 
subgroup $H$ of $G$ consisting of all automorphisms of $A$ that fix the three diagonal idempotents in $A$ is isomorphic to ${\bf Spin}(8)$ and hence has rank $4$. Therefore, for some $g\in G,g^{-1}Tg\subset H$. Hence $g^{-1}\phi g$ fixes the three diagonal idempotents. This implies that $g^{-1}\phi g$ fixes $a=diag(1,-1,0)\in A_0$. Hence $\phi$ fixes $x_0=g(a)\in A_0$.
\end{proof}
\begin{corollary}{\label{fixedpointcor}}Let $A$ be an Albert division algebra over a field $k$ of arbitrary characteristic and $\phi\in Aut(A)$ an automorphism of $A$. Then $\phi$ fixes a cubic subfield of $A$ pointwise.
\end{corollary}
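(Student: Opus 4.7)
The plan is to deduce the Corollary from Theorem \ref{fixedpoint} by arguing that the unital $k$-subalgebra generated by a fixed trace-zero element is automatically a cubic subfield of $A$. Apply Theorem \ref{fixedpoint} to $\phi$ to obtain $x_0\in A_0$ with $x_0\ne 0$ and $\phi(x_0)=x_0$, and set $L:=k[x_0]$, the unital subalgebra of $A$ generated by $x_0$. Since Jordan algebras are power-associative, $L$ is commutative and associative; since $A$ has no zero divisors, $L$ is a subfield of $A$. The generic cubic minimal polynomial of $x_0$ bounds $\dim_k L\le 3$, and because $\phi$ is an algebra automorphism fixing both $1$ and $x_0$, it fixes all of $L$ pointwise. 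It therefore suffices to show that $\dim_k L=3$.

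The dimension cannot be $1$: if $x_0\in k$, then $0=T(x_0)=3x_0$ forces $x_0=0$ (as $\mathrm{char}(k)\ne 3$), contradicting the choice of $x_0$. The crux is to rule out $\dim_k L=2$; this is the only place where something nontrivial is used. The plan is to invoke the remark at the end of Section \ref{albert} that ${\bf Aut}(A)$ remains anisotropic over any extension of $k$ of degree coprime to $3$. Suppose, for contradiction, that $L\subset A$ is a quadratic subfield. Since $[L:k]=2$ is coprime to $3$, ${\bf Aut}(A\otimes_k L)$ is anisotropic over $L$, and hence $A_L:=A\otimes_k L$ is again an Albert division algebra over $L$ (a division Albert algebra being characterised by its automorphism group being anisotropic). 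On the other hand, tensoring the embedding $L\hookrightarrow A$ over $k$ with $L$ yields an injection $L\otimes_k L\hookrightarrow A_L$; the Jordan product on the image coincides with the associative product because $L$ is commutative and associative inside $A$. Since $L/k$ is an \'etale quadratic extension, $L\otimes_k L\cong L\oplus L$, so $A_L$ contains two nonzero orthogonal idempotents summing to $1$. A unital Jordan division algebra admits no such nontrivial idempotents, contradicting the divisionality of $A_L$.

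The main obstacle is the anisotropy input quoted from Section \ref{albert}; granted it, the remainder of the argument is formal and yields $\dim_k L=3$, so $L$ is a cubic subfield of $A$ fixed pointwise by $\phi$.
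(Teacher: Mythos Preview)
Your approach matches the paper's --- invoke Theorem~\ref{fixedpoint}, set $L = k[x_0]$, and argue that $L$ is a cubic subfield --- but whereas the paper simply asserts this last point, your attempt to justify it contains an error. The step that fails is the claim that ``a division Albert algebra [is] characterised by its automorphism group being anisotropic.'' What Section~\ref{albert} actually says is that ${\bf Aut}(A)$ is anisotropic if and only if $A$ has no nonzero \emph{nilpotents}, which is strictly weaker than being a division algebra: reduced Albert algebras $\mathcal{H}_3(C,\Gamma)$ with $C$ a Cayley division algebra furnish examples with nontrivial idempotents but no nonzero nilpotents, hence anisotropic automorphism group. Thus from the anisotropy of ${\bf Aut}(A_L)$ you cannot deduce that $A_L$ lacks nontrivial idempotents, and your contradiction with $L\otimes_k L\cong L\times L$ does not follow.

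The underlying fact you want --- that an Albert division algebra remains a division algebra over every extension of degree coprime to $3$ --- is true but needs a different justification (for instance via the invariant $g_3(A)\in H^3(k,\mathbb{Z}/3)$, which detects non-reducedness and survives such restriction). A more direct fix avoids base change altogether: if $\dim_k k[x_0]\le 2$ with $x_0\notin k$, then the minimal polynomial of $x_0$ over $k$ properly divides the generic cubic $m_{x_0}(X)=X^3-T(x_0)X^2+S(x_0)X-N(x_0)$, so $m_{x_0}$ has a root $\alpha\in k$; but then $N(\alpha\cdot 1-x_0)=m_{x_0}(\alpha)=0$ while $\alpha\cdot 1-x_0\ne 0$, contradicting that $A$ is a division algebra.
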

\begin{proof} By the above theorem, there is a nonzero element $x_0\in A_0$ such that $\phi(x_0)=x_0$. Let $L$ be the subalgebra generated by $x_0$. Then $L$ is a cubic field extension of $k$ and clearly $\phi$ fixes $L$ pointwise.
\end{proof}
\begin{theorem}{\label{etalefixedpoint}}Let $A$ be an Albert algebra over $k$ , $char(k)\neq 2,3$, and suppose 
$\phi\in Aut(A)$ is semisimple. Then $\phi$ fixes a cubic \'{e}tale subalgebra $L\subset A$ pointwise. 
\end{theorem}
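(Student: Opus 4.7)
The plan is to exhibit the required subalgebra as $L := A^T$, the pointwise fixed subalgebra of a maximal $k$-torus $T$ of $G = {\bf Aut}(A)$ containing $\phi$. Since $\phi \in G(k)$ is semisimple in the connected reductive $k$-group $G$, a classical result (Borel--Springer) guarantees the existence of such a maximal $k$-torus $T$ through $\phi$. Being $k$-defined, $T$ has a $k$-defined pointwise fixed subalgebra $L \subset A$, and $L \subset A^{\phi}$ since $\phi \in T$. Everything then reduces to showing that $L$ is an \'{e}tale cubic subalgebra.

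I compute $L$ after base change to $\bar k$. As $T$ is a torus (hence linearly reductive), $T$-invariants commute with base change: $L \otimes_k \bar k = (A \otimes_k \bar k)^{T_{\bar k}}$. Over $\bar k$, $A$ becomes the split reduced Albert algebra $\mathcal{H}_3(\mathcal{C}, 1)$, and $T_{\bar k}$ is a split maximal torus of $G_{\bar k} \simeq F_4$ of rank $4$. By conjugacy of maximal tori over $\bar k$ and the existence of the maximal-rank subgroup $H \simeq {\bf Spin}(8)$ from the proposition at the start of this section (which fixes the three diagonal idempotents and also has rank $4$), $T_{\bar k}$ may be conjugated inside a maximal torus $T_H \subset H_{\bar k}$. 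As an $H_{\bar k}$-module, $A_{\bar k}$ decomposes into the diagonal summand $\bar k e_1 \oplus \bar k e_2 \oplus \bar k e_3$ (trivial $H$-action) together with three $8$-dimensional off-diagonal summands on which $H$ acts via the vector representation and the two half-spin representations---the classical triality picture. All weights of $T_H$ on each of these three $8$-dimensional summands are nonzero, hence $A_{\bar k}^{T_H}$ coincides with the $3$-dimensional diagonal subalgebra. Therefore $\dim_k L = 3$ and $L \otimes_k \bar k \simeq \bar k \times \bar k \times \bar k$.

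Since $L$ is a $3$-dimensional commutative unital $k$-subalgebra of $A$ whose base change to $\bar k$ is split \'{e}tale, and \'{e}taleness descends under faithfully flat base change, $L$ is itself a cubic \'{e}tale $k$-subalgebra of $A$. As $\phi \in T$, $\phi$ fixes $L$ pointwise, proving the theorem.

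The main obstacle is the appeal in the first step to the existence of a maximal $k$-torus through an arbitrary semisimple $k$-rational element; this is precisely why semisimplicity of $\phi$ is essential here (in contrast to Theorem \ref{fixedpoint}, which only produced a single fixed vector). An alternative route bypassing this fact is to work directly with the algebra $A^{\phi}$: the ${\bf Spin}(8)$ argument shows that $A^{\phi} \otimes_k \bar k$ contains an \'{e}tale cubic subalgebra, and a Zariski density argument on the $k$-vector space $A^{\phi}$ (valid when $k$ is infinite) then produces an element $a \in A^{\phi}$ whose generic minimum polynomial has nonzero discriminant and for which $1, a, a^2$ are linearly independent, making $k[a]$ the desired cubic \'{e}tale subalgebra.
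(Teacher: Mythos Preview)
Your proof is correct but follows a genuinely different route from the paper's. The paper works directly with the fixed-point subalgebra $B=A^{\phi}$: it uses semisimplicity of $\phi$ to ensure $B$ commutes with base change, then invokes the ${\bf Spin}(8)$ argument over $\bar{k}$ only to conclude that $B\otimes\bar{k}$ contains a split cubic \'{e}tale subalgebra, and finally appeals to Jordan-algebraic structure theory (the identification, due to Petersson--Racine, of the nil radical of $B$ with the radical of its trace form, plus a Wedderburn-type splitting $B=S\oplus N$ available when $\mathrm{char}(k)\neq 2,3$) to extract a cubic \'{e}tale subalgebra over $k$ itself. You instead pass through a maximal $k$-torus $T$ containing $\phi$ (legitimate here since $G$ is of type $F_4$, hence simply connected, so $Z_G(\phi)$ is connected reductive and contains a maximal $k$-torus, which is then maximal in $G$ and contains the central element $\phi$), and compute $A^T$ directly via the triality decomposition of the $27$-dimensional representation restricted to ${\bf Spin}(8)$. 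In effect you prove the paper's \emph{subsequent} theorem (that $A^T$ is cubic \'{e}tale for any maximal $k$-torus $T$) first, and obtain the present statement as a corollary---reversing the paper's logical order. Your approach trades the Jordan-algebraic machinery for the group-theoretic input on $k$-tori through semisimple elements and a small amount of ${\bf Spin}(8)$ weight bookkeeping; the paper's route stays within Jordan algebra theory and does not need Steinberg's connectedness theorem. Your brief ``alternative route'' at the end is in fact closer in spirit to the paper's argument, though the paper replaces your Zariski-density step by the nil-radical/separability argument, which avoids any hypothesis on $|k|$.
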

\begin{proof} Let $B=A^{\phi}$ denote the subalgebra of fixed points of $\phi$ in $A$. Then $B$ is compatible with arbitrary base field extensions, i.e. for any extension $M/k,~~(A\otimes_kM)^{\phi\otimes 1}=A^{\phi}\otimes_kM$.
By an argument as in the proof of the above theorem, $A^{\phi}\otimes_k\overline{k}$ contains a split cubic \'{e}tale subalgebra, where $\overline{k}$ is a fixed algebraic closure of $k$. In particular, the nil radical of $A^{\phi}\otimes_k\overline{k}$ has codimension at least $3$ over $\overline{k}$. By (\cite{PR4}), $N=$ Nil radical of $A^{\phi}$ coincides with the radical of the bilinear trace of $A^{\phi}$. Hence $\overline{N}=N\otimes_k\overline{k}=$Nilrad$(A^{\phi}\otimes_k\overline{k})$. Since $char(k)\neq 2,3$, the algebra $A^{\phi}/N$ is separable. Hence there exists a cubic separable Jordan subalgebra $S\subset A^{\phi}$ such that $A^{\phi}=S\oplus N$, direct sum of subspaces. But $dim(S)\geq 3$, hence $S$ contains a cubic \'{e}tale subalgebra $L$ and since $S\subset A^{\phi}$, $\phi$ fixes $L$ pointwise. 
\end{proof}
\noindent
{\bf Remark :} The original proof of this theorem had some mistakes, the proof above was suggested to us by Holger P. Petersson. 
\begin{corollary}Let $G$ be a group of type $F_4$, defined and anisotropic over a (perfect) field $k$, $char(k)\neq 2,3$. Let $T$ be a maximal torus in $G$, defined over $k$. Then there exists a simply connected subgroup $H\subset G$ defined over $k$, of type $D_4$, such that $T\subset H$ over $k$. 
\end{corollary}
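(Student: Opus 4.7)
The plan is to exhibit a cubic \'{e}tale $k$-subalgebra $L \subset A$ that $T$ fixes pointwise, where $A$ is an Albert division algebra with $G = {\bf Aut}(A)$; the earlier corollary of this section asserting that ${\bf Aut}(A/L)$ is of type $D_4$ will then supply our candidate $H$, and a descent argument will yield simple connectedness.

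First I would invoke the standard fact that an anisotropic $k$-form of $F_4$ (in characteristic $\neq 2,3$) is the automorphism group of an Albert division algebra, so $G = {\bf Aut}(A)$ and $T$ sits as a $k$-subgroup of ${\bf Aut}(A)$ acting on $A$ by algebra automorphisms. Then I would set $L := A^T$, the fixed subspace of the $T$-action on $A$. Since $T$ is $k$-defined, $L$ is a $k$-linear subspace; since $T$ acts by algebra automorphisms, $L$ is in fact a $k$-subalgebra of $A$.

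The next step is to show that $L$ is a cubic \'{e}tale subalgebra, which may be checked after base change to an algebraic closure $\bar k$. Over $\bar k$, $G_{\bar k}$ is split $F_4$ and $A_{\bar k} \cong \mathcal{H}_3(\mathcal{C}, 1)$ for the split octonion algebra $\mathcal{C}$. By Proposition 4.1, the subgroup ${\bf Spin}(8) \subset G_{\bar k}$ fixing the three diagonal idempotents $e_1, e_2, e_3$ has rank $4$, so it contains a maximal torus of $G_{\bar k}$; by conjugacy of maximal tori in $G_{\bar k}$, $T_{\bar k}$ is $G(\bar k)$-conjugate to a maximal torus $S$ of this ${\bf Spin}(8)$. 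Under the decomposition
\[ A_{\bar k} = \bar k e_1 \oplus \bar k e_2 \oplus \bar k e_3 \oplus \mathcal{C}_{12} \oplus \mathcal{C}_{13} \oplus \mathcal{C}_{23}, \]
$S$ acts on each off-diagonal piece $\mathcal{C}_{ij}$ via one of the three $8$-dimensional fundamental representations of ${\bf Spin}(8)$ (related by triality), none of which has a zero weight. Hence $A_{\bar k}^S = \bar k e_1 \oplus \bar k e_2 \oplus \bar k e_3$ is the split cubic \'{e}tale algebra, and $A_{\bar k}^{T_{\bar k}}$ is a conjugate of this, still a $3$-dimensional \'{e}tale subalgebra. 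Therefore $L \otimes_k \bar k$ is cubic \'{e}tale, so $L$ itself is a cubic \'{e}tale $k$-subalgebra of $A$.

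Finally, setting $H := {\bf Aut}(A/L)$, we have $T \subseteq H$ by construction, and the aforementioned corollary tells us $H$ is a $k$-subgroup of $G$ of type $D_4$. To verify that $H$ is simply connected, I would base-change to $\bar k$: there $L \otimes_k \bar k$ becomes the split cubic \'{e}tale algebra, and $H \otimes_k \bar k$ becomes exactly the group of Proposition 4.1, which is ${\bf Spin}(8)$; as simple connectedness descends from $\bar k$ to $k$, $H$ is simply connected over $k$, completing the proof. The main obstacle in this plan is the weight calculation showing $\dim_{\bar k} A^{T_{\bar k}} = 3$, but this is standard given the absence of a zero weight in any of the three $8$-dimensional fundamental representations of ${\bf Spin}(8)$; everything else is either invocation of the structure results already proved or a routine descent argument.
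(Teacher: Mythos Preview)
Your argument is essentially correct, but one small imprecision: an anisotropic $k$-form of $F_4$ is ${\bf Aut}(A)$ for an Albert algebra $A$ \emph{without nonzero nilpotents}, not necessarily a division algebra (e.g.\ the compact real $F_4$ comes from the reduced algebra $\mathcal{H}_3(\mathbb{O})$). Fortunately you never use the division property, so the proof stands once you replace ``Albert division algebra'' by ``Albert algebra''.

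Your route is genuinely different from the paper's. The paper does not compute $A^T$ directly; instead it first argues that $k$ cannot be algebraic over a finite field (else $G$ would be split), then invokes Borel to pick a \emph{regular} element $\phi\in T(k)$, uses perfectness of $k$ to ensure $\phi$ is semisimple, and applies Theorem~\ref{etalefixedpoint} to obtain a cubic \'etale $L$ fixed by $\phi$. Finally, it chooses a $k$-maximal torus $T'\subset G_L$ containing $\phi$ and uses regularity of $\phi$ to force $T=T'$. Your approach bypasses all of this: you set $L=A^T$ from the start, base-change, conjugate $T_{\bar k}$ into the ${\bf Spin}(8)$ fixing the diagonal idempotents, and read off $\dim A^T=3$ from the fact that the three $8$-dimensional fundamental representations of ${\bf Spin}(8)$ are minuscule (no zero weight). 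This is cleaner and does not rely on the existence of regular $k$-points, on perfectness of $k$, or on Theorem~\ref{etalefixedpoint}. The paper's method, by contrast, showcases the utility of the fixed-point theorems just proved; and amusingly, your computation of $A^T$ is exactly the content of the \emph{next} theorem in the paper, whose proof there still goes through a regular element rather than your weight argument.
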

\begin{proof} First assume that $k$ is an algebraic extension of a finite field. Then $Br(k)=0$ (see e.g. \cite{S}). Hence any Albert algebra over $k$ is split and every group of type $F_4$ over $k$ is split. Hence $k$ is not an algebraic extension of a finite field and we can apply (\cite{B}, Proposition 8.8) to find a regular element $\phi\in T(k)$. Let $A$ be the Albert algebra over $k$ such that $G={\bf Aut}(A)$. Since $G$ is anisotropic over $k$ and $k$ is perfect, by (\cite{R}, Proposition 6.3), all elements of ${\bf Aut}(A)(k)=Aut(A)$ are semisimple. Hence by the theorem above, there is a cubic \'{e}tale subalgebra $L\subset A$ fixed pointwise by $\phi$. Let $G_L={\bf Aut}(A/L)$ be the subgroup of $G$ consisting of automorphisms of $A$ that fix $L$ pointwise. Then $G_L$ is a simply connected group of type $D_4$, defined over $k$ (\cite{KMRT}, Chapter IX). Clearly $\phi\in G_L$. Let $T'\subset G_L$ be a maximal torus defined over $k$ such that $\phi\in T'$. But then $T'$ is also maximal in $G$. Since $\phi$ is regular, it belongs to a unique maximal torus. Hence $T=T'\subset G_L$. Hence we have proved the assertion.  
\end{proof}
\noindent
{\bf Some automorphisms of first Tits constructions :}\\
We recall some known automorphisms of Albert algebras which are Tits first constructions and introduce notation for them for our future use. The facts used on Albert algebras can be found in Chapter IX of (\cite{KMRT}) or Chapter IX of ({\cite{J1}).  
\vskip1mm
Let $A=J(D,\mu)$ be an Albert algebra and $a,b\in D^*$ be such that $N_D(a)=N_D(b)$. Let $\psi_{a,b}:A\longrightarrow A$ be the automorphism of $A$ given by 
$$\psi_{a,b}(x,y,z)=(axa^{-1},ayb^{-1},bza^{-1}).$$
Then clearly $\psi_{a,b}(D_0)=D_0$ and it is known (see \cite{KMRT}, Chapter IX) that any automorphism of $A$ stabilizing $D_0$ is of this form. A special case of this merits a separate mention. Let $p\in SL(1,D)$. Let $\mathcal{J}_p:A\longrightarrow A$ denote the automorphism $\psi_{1,p^{-1}}$, i.e. 
$$\mathcal{J}_p(x,y,z)=(x,yp,p^{-1}z).$$ 
We shall write $\mathcal{I}_a$ for the automorphism $\mathcal{I}_a(x,y,z)=(axa^{-1},aya^{-1},aza^{-1})$. Hence $\mathcal{I}_a=\psi_{a,a}$. Observe that 
$$\psi_{a,b}\psi_{c,d}=\psi_{ac,bd},~~\mathcal{I}_{ab}=\mathcal{I}_a\mathcal{I}_b,~~\mathcal{J}_{ab}=\mathcal{J}_b\mathcal{J}_a,~~a,b\in D^.$$
It is known that any automorphism of $A$ fixing $D_0$ pointwise is of this form. It is also known that the subgroup 
${\bf Aut}(A/D_+)$ of ${\bf Aut}(A)$ fixing $D_+$ pointwise is isomorphic to ${\bf SL}(1,D)$ (see \cite{KMRT}, Section 39.B). We record the extension of automorphisms of subalgebras of the form $D_+$ of $A$ below, since we will use this frequently in the paper: 
\begin{lemma}{\label{extendaut}}Let $A$ be an Albert algebra over $k$ and $D_+\subset A$. Then any automorphism of $D$ extends to an automorphism of $A$. Since every automorphism of $D$ is inner, we may extend any automorphism of $D$ to an automorphism by $\psi_{a,b}$, for suitable $a,b\in D^*$ and, in particular, $\mathcal{I}_a=\psi_{a,a}$ gives such an extension for a suitable $a\in D^*$. If 
$A=J(D,\mu)$, then every automorphism of $A$ stabilizing $D_0$ is of the form $\psi_{a,b}$ for suitable $a,b\in D^*$.
\end{lemma}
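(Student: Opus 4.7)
The plan is to first verify that conjugation by any $a\in D^*$ extends to an algebra automorphism $\mathcal{I}_a=\psi_{a,a}$ of $A$, and then to combine this with the description of ${\bf Aut}(A/D_+)$ recorded in the proposition above in order to classify all automorphisms of $A$ stabilizing $D_0$. Since $D_+\subset A$, the first Tits construction theorem quoted in Section 2 lets us write $A=J(D,\mu)$ for a suitable $\mu\in k^*$.

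Given $\alpha\in Aut(D)$, Skolem--Noether gives $\alpha = Int(a)$ for some $a\in D^*$. Set $\mathcal{I}_a(x,y,z)=(axa^{-1},aya^{-1},aza^{-1})$, which by construction restricts to $\alpha$ on $D_0$. To check $\mathcal{I}_a\in Aut(A)$, I would verify the Tits product formula component-by-component, using that conjugation by $a$ in the associative algebra $D$ preserves the reduced trace, so that $\widetilde{axa^{-1}}=a\widetilde{x}a^{-1}$; it preserves the Jordan product $x\cdot y$; and it preserves the cross product $x\times y$ (the latter being built from trace and Jordan product via $2y^{\#}=y\times y$, $y^{\#}=N(y)y^{-1}$, and polarization). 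This yields the first two assertions of the lemma.

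For the final assertion, let $\phi\in Aut(A)$ with $\phi(D_0)=D_0$. Restriction gives a Jordan automorphism of $D_+$. Since $D$ has degree $3$, every Jordan automorphism of $D_+$ is in fact an associative automorphism of $D$ (odd-degree central simple algebras admit involutions of the first kind only when split, and in every case any anti-automorphic possibility is ruled out as an extension to $A$ by the structure of $J(D,\mu)$). Hence by Skolem--Noether, $\phi|_{D_0}=Int(a)$ for some $a\in D^*$, and $\mathcal{I}_a^{-1}\circ\phi$ fixes $D_0$ pointwise. By the proposition above, ${\bf Aut}(A/D_+)\simeq{\bf SL}(1,D)$, and each $k$-rational point is realized explicitly as $\mathcal{J}_p=\psi_{1,p^{-1}}$ for some $p\in SL(1,D)$. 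Thus $\phi=\mathcal{I}_a\circ\mathcal{J}_p=\psi_{a,ap^{-1}}$ via the composition law $\psi_{a,b}\psi_{c,d}=\psi_{ac,bd}$, and $N(ap^{-1})=N(a)$ since $N(p)=1$.

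The hard part will be checking cross-product compatibility in the verification that $\psi_{a,b}$ preserves the Tits multiplication. Using $(uv)^{\#}=v^{\#}u^{\#}$ and $u^{\#}=N(u)u^{-1}$ in the associative algebra $D$, a direct computation gives $(bxa^{-1})\times(bya^{-1}) = (N(b)/N(a))\,a(x\times y)b^{-1}$, which reduces to $a(x\times y)b^{-1}$ precisely under the norm condition $N(a)=N(b)$. In the diagonal case $b=a$ this condition is automatic, so $\mathcal{I}_a$ is always an automorphism, completing the verification and hence the proof.
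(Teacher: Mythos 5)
The paper gives no proof of this lemma at all: it is recorded as a known fact, with a pointer to \cite{KMRT}, Chapter IX and Section 39.B, immediately after the statements that $\psi_{a,b}$ stabilizes $D_0$ and that ${\bf Aut}(A/D_+)\simeq{\bf SL}(1,D)$. So your argument is necessarily a different route, namely an actual verification, and its computational core is sound. The identity $(bxa^{-1})^{\#}=N_D(b)N_D(a)^{-1}\,a\,x^{\#}\,b^{-1}$, linearized to the same transformation rule for $x\times y$, together with $\widetilde{axa^{-1}}=a\widetilde{x}a^{-1}$, does show that $\psi_{a,b}$ respects all three components of the Tits multiplication precisely when $N_D(a)=N_D(b)$; and the reduction of the converse to ${\bf Aut}(A/D_+)\simeq{\bf SL}(1,D)$ via $\mathcal{I}_a^{-1}\circ\phi$ and the composition law $\psi_{a,a}\psi_{1,p^{-1}}=\psi_{a,ap^{-1}}$ is correct.

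The one genuine soft spot is the step disposing of the possibility that $\phi|_{D_+}$ is an anti-automorphism. First, you are implicitly invoking the theorem that a Jordan automorphism of $D_+$ is induced by an automorphism or an anti-automorphism of the associative algebra $D$; that deserves an explicit citation. Second, your parenthetical claim that ``in every case any anti-automorphic possibility is ruled out as an extension to $A$ by the structure of $J(D,\mu)$'' is unproved and is in fact false when $D$ is split: for $A=J(M_3(k),1)$ the map $(x,y,z)\mapsto(x^t,z^t,y^t)$ is an automorphism of $A$ stabilizing $D_0$ whose restriction to $D_0$ is the transpose, which is not of the form $Int(a)$; hence the final assertion of the lemma itself must be read with $D$ a division algebra (the only case in which the paper ever applies it). In the division case the correct justification is the Brauer-group argument you half-state: an anti-automorphism of $D$ is an isomorphism $D\simeq D^{op}$, forcing the class of $D$ to have exponent dividing $2$, which is impossible for a division algebra of degree $3$. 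With the statement so restricted, your proof is complete.
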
 
\vskip1mm
\noindent
{\bf Automorphisms of second Tits constructions :}\\
Let $A$ be an Albert algebra over a field $k$. We will briefly discuss here automorphisms of $A$ that leave a $9$-dimensional subalgebra invariant. Let 
$A=J(B,\sigma,u,\mu)$ be a second Tits construction Albert division algebra. Let $\sigma'=Int(u)\circ\sigma$. We then have the special unitary group 
of $(B,\sigma')$ given by 
$$SU(B,\sigma')=\{x\in B|x\sigma'(x)=1,~N_B(x)=1\}.$$
Let $p\in SU(B,\sigma)$ and $q\in SU(B,\sigma')$. Let $\phi_{p,q}$ denote the automorphism of $A$ given by 
$$\phi_{p,q}(a,b)=(pa\sigma(p),pbq).$$
Then $\phi_{p,q}$ stabilizes $\mathcal{H}(B,\sigma)$ and it is known (see \cite{KMRT}, 39.16) that any automorphism of $A$ stabilizing $\mathcal{H}(B,\sigma)$ is of this form. Let $p\in SU(B,\sigma)$. We will denote $\phi_{p,1}$ simply by $\phi_p$. 
\vskip2mm
\noindent
{\bf Fixed points of subgroups of ${\bf Aut}(A)$:} Here we discuss the subalgebra of fixed points of certain subgroups of $G={\bf Aut}(A)$. 
For a subgroup $H\subset G$, we denote the subalgebra of $A$ of fixed points of $H$ by 
$$A^H=\{x\in A|h(x)=x~(h\in H)\}.$$
\begin{theorem}
Let $A$ be an Albert algebra over a field $k$, $char(k)\neq 2,3$. Let 
$T\subset G={\bf Aut}(A)$ be a maximal torus defined over $k$. Then $A^T$ is a cubic \'{e}tale subalgebra of $A$.
\end{theorem}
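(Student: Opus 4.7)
The plan is to reduce to the split case by flat base change, then identify $A^T$ as the weight-zero space of a maximal torus inside a $\mathbf{Spin}(8)$-subgroup of $G$. Two facts are immediate. First, $A^T$ is a $k$-subalgebra because $T$ acts by algebra automorphisms. Second, its formation commutes with arbitrary field extension, i.e.\ $A^T \otimes_k M = (A \otimes_k M)^{T_M}$ for every $M/k$, since $A^T$ is cut out of $A$ by the vanishing of the linear maps $t - \mathrm{id}$ for $t \in T$. It therefore suffices to prove $A^T \otimes_k \bar k \cong \bar k \times \bar k \times \bar k$; dimension three and \'etaleness then descend to $k$.

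After passing to $\bar k$, one may assume $A = \mathcal{H}_3(\mathcal{C},1)$ is split and $G = \mathbf{Aut}(A)$ is split of type $F_4$. The subgroup $H \simeq \mathbf{Spin}(8)$ of $G$ fixing the three diagonal idempotents $E_1, E_2, E_3$ (as in the first proposition of this section) is of type $D_4$ and hence of rank $4 = \mathrm{rk}(F_4)$, so any maximal torus $T_0$ of $H$ is also maximal in $G$. Over $\bar k$ all maximal tori of $G$ are conjugate, so we may choose $g \in G(\bar k)$ with $T = g T_0 g^{-1}$. Then $g$ carries $A^{T_0}$ isomorphically onto $A^T$ as a $\bar k$-subalgebra, so it suffices to compute $A^{T_0}$.

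For this, decompose $A = (\bar k E_1 \oplus \bar k E_2 \oplus \bar k E_3) \oplus \mathcal{C}_{12} \oplus \mathcal{C}_{23} \oplus \mathcal{C}_{31}$ under $H$: each $E_i$ is fixed pointwise, and the three off-diagonal copies of $\mathcal{C}$ carry the three inequivalent $8$-dimensional representations of $\mathbf{Spin}(8)$, namely the vector representation and the two half-spin representations, permuted by triality. With respect to the standard basis of characters of $T_0$, the weights of these representations are $\pm e_i$ and $\tfrac12(\pm e_1 \pm e_2 \pm e_3 \pm e_4)$ (with even, respectively odd, number of minus signs); none of these weights is zero. Consequently $T_0$ has no nonzero fixed vector on any off-diagonal component, and $A^{T_0} = \bar k E_1 \oplus \bar k E_2 \oplus \bar k E_3$, a split cubic \'etale subalgebra.

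The key input is the identification of the three off-diagonal components with the three $8$-dimensional representations of $\mathbf{Spin}(8)$; this is precisely the triality picture underlying the embedding $D_4 \subset F_4$. Once that is granted, the weight count is elementary and the rest is descent. I do not anticipate any genuine obstacle beyond the bookkeeping of these weight spaces and the routine base-change compatibility of fixed-point subspaces under linear actions of algebraic groups.
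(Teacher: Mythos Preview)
Your argument is correct, and it takes a genuinely different route from the paper's.

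The paper argues in two separate steps. For the \emph{upper} bound $\dim A^T\le 3$, it constructs by hand (in the split first construction $J(M_3(k),1)$) a semisimple automorphism $\psi$ with $\dim A^\psi=3$, places $\psi$ in a maximal torus $T'$, and then uses conjugacy of maximal tori over $\bar k$. For the \emph{lower} bound $\dim A^T\ge 3$ and the \'etaleness, it invokes the earlier Theorem~\ref{etalefixedpoint} (whose proof goes through nilradicals and the Wedderburn decomposition of $A^\phi$) applied to a regular element of $T(k)$, together with Corollary~\ref{fixedpointcor}.

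You bypass both steps by a single representation-theoretic computation: after conjugating $T$ into the $\mathbf{Spin}(8)$ subgroup $H$, you read off $A^{T_0}$ as the zero-weight space, using that the three off-diagonal octonion slots carry the vector and two half-spin representations, none of which has zero as a weight. This simultaneously gives the exact dimension and the identification $A^{T_0}\cong\bar k^3$, so \'etaleness is immediate by descent. Your approach is cleaner and avoids the auxiliary Jordan-theoretic result Theorem~\ref{etalefixedpoint}, as well as the need to locate a regular $k$-rational point in $T$; the paper's approach, on the other hand, stays within the toolkit it has already developed and does not need to appeal to the explicit triality weight structure. One minor remark: your justification for base-change compatibility of $A^T$ (``cut out by $t-\mathrm{id}$'') is a little informal; the clean statement is that for a diagonalizable group acting on a finite-dimensional space, the weight decomposition is defined over $k$ and the invariant subspace is the trivial-weight summand, which visibly commutes with scalar extension.
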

\noindent
We need the following lemma for the proof.
\begin{lemma} Let $\mathcal{C}$ be a split Cayley algebra over $k$ and let $A=\mathcal{H}_3(\mathcal{C},1)$. Then there exists a semisimple automorphism $\phi\in 
Aut(A)$ such that $Dim_k(A^{\phi})=3$.
\end{lemma}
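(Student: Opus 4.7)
The plan is to exhibit $\phi$ as a regular semisimple element in a maximal $k$-split torus of $G=\mathbf{Aut}(A)$. Since $\mathcal{C}$ is the split Cayley algebra, $A$ is the split Albert algebra and $G$ is the split simple group of type $F_4$ over $k$, of rank $4$.

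First I would invoke the representation theory of $F_4$: as a $G$-module one has $A=\mathbf{1}\oplus V_{26}$, with $V_{26}$ the $26$-dimensional trace-zero irreducible representation. Restricting to a maximal split torus $T\subset G$, the weights of $V_{26}$ are the $24$ short roots of $F_4$, each with multiplicity one, together with the zero weight of multiplicity two. Consequently the $T$-fixed subspace of $A$ has total dimension $1+2=3$.

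Next I would place $T$ concretely inside the known $D_4$-subgroup. By the first proposition of this section, the subgroup $H\subset G$ fixing the three diagonal idempotents of $\mathcal{H}_3(\mathcal{C},1)$ is a $k$-form of $\mathbf{Spin}(8)$, of rank $4$ equal to that of $G$; so any maximal $k$-split torus $T\subset H$ is maximal in $G$. Such a $T$ fixes pointwise the cubic \'etale diagonal subalgebra $L=\{\mathrm{diag}(\alpha_1,\alpha_2,\alpha_3):\alpha_i\in k\}$, and the weight computation above forces $A^T=L$.

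Then I would choose $\phi\in T(k)$ to be regular, meaning that no short root of $F_4$ vanishes at $\phi$. The non-regular locus is a union of finitely many proper subtori of $T$ (one per root pair), hence its complement is Zariski-dense; for any $k$ of characteristic $\neq 2,3$ this complement contains $k$-rational points, by a direct count on $T(k)\cong(k^\times)^4$ (in the potentially awkward case of a small finite field, one instead takes $\phi$ to be a triality-compatible triple $(g_1,g_2,g_3)\in\mathrm{SO}(n_\mathcal{C})(k)^3$ with each $g_i$ fixed-point-free on $\mathcal{C}$). For any such regular $\phi$, the eigenspace $V_{26}^\phi$ for eigenvalue $1$ coincides with the zero-weight space of $T$ acting on $V_{26}$, so $A^\phi=\mathbf{1}\oplus V_{26}^\phi=A^T=L$ has dimension $3$. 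Being an element of a torus, $\phi$ is semisimple, completing the construction.

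The main potential obstacle is ensuring the existence of $k$-rational regular elements of $T$ over small finite fields; this is a finitary counting issue handled either by the explicit triality triple above or by a harmless enlargement of $T$ to a suitable twisted torus of the same rank, and does not affect the conclusion. The representation-theoretic input (that the trivial weight appears with multiplicity two in the short adjoint representation of $F_4$) is classical and can also be read directly off the Cayley-matrix description of $A$, where the three diagonal entries account for the zero-weight space.
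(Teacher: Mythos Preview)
Your argument is correct, but it takes a genuinely different route from the paper's. The paper proceeds by a direct, elementary construction: it writes the split Albert algebra as a first Tits construction $A=J(M_3(k),1)$, chooses a diagonal matrix $a=\mathrm{Diag}(\alpha,\beta,\gamma)\in SL_3(k)$ with pairwise distinct entries, none equal to $1$, and sets $\psi=\psi_{a,1}$, i.e.\ $\psi(x,y,z)=(axa^{-1},ay,za^{-1})$. A two-line computation then gives $A^{\psi}=(k\times k\times k,0,0)$, the diagonal of $D_0=M_3(k)$, and semisimplicity of $\psi$ is read off from the diagonal form of $a$. No representation theory of $F_4$ enters.

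Your approach, by contrast, invokes the weight decomposition of the $26$-dimensional representation restricted to a maximal split torus, identifies $A^T$ with the $3$-dimensional zero-weight space, and then selects a regular $\phi\in T(k)$. This is more conceptual and explains structurally \emph{why} the fixed-point dimension is exactly $3$, at the cost of importing the weight multiplicities of the little adjoint representation. The paper's explicit $\psi_{a,1}$ in fact sits inside the rank-$2$ diagonal torus of the $A_2$-subgroup $\mathbf{SL}(1,M_3(k))$ rather than the $\mathbf{Spin}(8)$ you use, so even the concrete placements differ. Both arguments share the same mild issue over very small finite fields (your regularity count, the paper's need for three distinct nonidentity scalars with product $1$), but since the lemma is only ever applied over $\overline{k}$ in the subsequent theorem, this is immaterial.
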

\begin{proof}Write $A$ as a first construction $A=J(M_3(k),1)$. Choose $\alpha, \beta,\gamma\in k^*$, each different from $1$, pairwise distinct, such that $\alpha\beta\gamma=1$,  and $a=Diag(\alpha,\beta,\gamma)\neq\pm{1}$. Define $\psi:A\longrightarrow A$ by 
$\psi(x,y,z)=(axa^{-1},ay,za^{-1})$. By the choice of the matrix $a$, it is clear that $\psi$ is semisimple. We have,
$$\psi(x,y,z)=(x,y,z)\Longleftrightarrow (axa^{-1},ay,za^{-1})=(x,y,z)$$
$\Longleftrightarrow$ $x$ centralizes $a$ and $y=z=0$ since $a\neq\pm{1}$ 
Hence $x\in k\times k\times k$ and $A^{\psi}=(k\times k\times k,0,0)$. 
Therefore $Dim_k(A^{\psi})=3$.  
\end{proof}
\noindent
{\bf Proof of Theorem :}Let $\phi\in Aut(A\otimes_k\overline{k})$ be a semisimple automorphism such that $Dim_{\overline{k}}(A\otimes_k\overline{k})=3$, which exists by the lemma. Let $T'\subset G$ be a maximal torus with $\phi\in T'$. For some $g\in G$ we have $gTg^{-1}=T'$. Hence $Dim_k (A^T)\leq 3$. We now apply 
Theorem (\ref{etalefixedpoint}) to a regular element in $T(k)$ to conclude $Dim_k(A^T)\geq 3$. Hence $Dim_k(A^T)=3$ and that $A^T$ is \'{e}tale 
follows from Corollary (\ref{fixedpointcor}).
\begin{corollary} Let $S\subset {\bf Aut}(A)$ be a $k$-torus. Then $Dim(A^S)\geq 3$.
\end{corollary}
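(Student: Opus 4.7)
\bigskip
\noindent
\textbf{Proof proposal.} The plan is to reduce to the case of a maximal torus and then quote the preceding theorem. More precisely, I will produce a maximal $k$-torus $T$ of $G = \mathbf{Aut}(A)$ containing $S$, and then use the obvious monotonicity of fixed point subalgebras: if $S \subseteq T$, then $A^T \subseteq A^S$, so $\dim_k A^S \geq \dim_k A^T$.

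First I would establish that any $k$-subtorus $S$ of the connected reductive $k$-group $G = \mathbf{Aut}(A)$ sits inside a maximal $k$-torus of $G$. This is a standard fact in the theory of algebraic groups over arbitrary fields: the centralizer $Z_G(S)$ is a connected reductive $k$-subgroup of $G$ containing $S$ in its center, and any maximal $k$-torus $T$ of $Z_G(S)$ (which exists, since every connected reductive $k$-group contains a maximal $k$-torus defined over $k$) automatically contains $S$. Moreover, because $T$ contains the central torus $S$ of $Z_G(S)$ and is maximal in $Z_G(S)$, the torus $T$ is in fact a maximal $k$-torus of $G$ itself.

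Having chosen such a $T$, the inclusion $S \subseteq T$ gives $A^T \subseteq A^S$. By the theorem just proved, $A^T$ is a cubic \'{e}tale subalgebra of $A$, in particular $\dim_k A^T = 3$. Consequently
\[
\dim_k A^S \;\geq\; \dim_k A^T \;=\; 3,
\]
which is exactly the assertion of the corollary.

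The only step that requires any care is the embedding of $S$ into a maximal $k$-torus: one must know that the centralizer of a $k$-subtorus in a reductive $k$-group is itself reductive and admits a maximal $k$-torus. Since $\mathbf{Aut}(A)$ is of type $F_4$ and hence connected reductive, and since the statement and proof are purely group-theoretic (independent of whether $A$ is reduced or a division algebra), no further input on Albert algebras is needed beyond the preceding theorem.
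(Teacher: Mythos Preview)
Your proof is correct and follows essentially the same approach as the paper: embed $S$ in a maximal $k$-torus $T$, use the inclusion $A^T \subseteq A^S$, and invoke the preceding theorem to conclude $\dim_k A^S \geq \dim_k A^T = 3$. The paper's version is terser, simply asserting the existence of such a $T$ without the centralizer justification you supply.
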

\begin{proof} Let $S\subset T$, where $T\subset {\bf Aut}(A)$ is a $k$-maximal torus. It follows that $A^T\subset A^S$ and the result follows from the above theorem.
\end{proof}
\begin{corollary}Let $L\subset A$ be a cubic \'{e}tale subalgebra. Then for $G_L={\bf Aut}(A/L)$, $A^{G_L}=L$. 
\end{corollary}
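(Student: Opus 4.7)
The plan is to use the previous theorem, which says that if $T \subset \mathbf{Aut}(A)$ is a maximal $k$-torus then $A^T$ is a cubic étale subalgebra (in particular three-dimensional), and to compare dimensions.

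First I observe the trivial inclusion $L \subseteq A^{G_L}$, which holds by the very definition of $G_L = \mathbf{Aut}(A/L)$. The reverse inclusion is where the work lies. Choose a maximal $k$-torus $T$ of $G_L$; such a $T$ exists because $G_L$ is a reductive $k$-group (of type $D_4$ by the corollary to the first proposition). Now compare ranks: $G_L$ has rank $4$ and $\mathbf{Aut}(A)$, being of type $F_4$, also has rank $4$. Hence $T$, viewed as a torus of $\mathbf{Aut}(A)$, is maximal there as well.

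Since $T \subseteq G_L$, every element of $T$ fixes $L$ pointwise, so $L \subseteq A^T$. By the theorem proved just above, $A^T$ is a cubic étale subalgebra of $A$, hence $\dim_k A^T = 3 = \dim_k L$. The inclusion $L \subseteq A^T$ between equidimensional subspaces forces $L = A^T$. Finally, the inclusion $T \subseteq G_L$ gives $A^{G_L} \subseteq A^T = L$, which combined with the trivial inclusion yields $A^{G_L} = L$.

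The only point that could cause trouble is the dimension argument, which depends entirely on the previous theorem; that theorem in turn relied on producing a semisimple automorphism over $\overline{k}$ with a three-dimensional fixed subalgebra and on Corollary \ref{fixedpointcor}. Given those, no further calculation is required — the whole argument is rank-matching plus a dimension count, and nothing here depends on whether $A$ is a division algebra or on the particular Tits construction realizing $A$.
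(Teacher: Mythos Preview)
Your proof is correct and is essentially identical to the paper's own argument: take a maximal $k$-torus $T\subset G_L$, observe it has rank $4$ and hence is maximal in ${\bf Aut}(A)$, and then use the chain $L\subset A^{G_L}\subset A^T=L$ together with the previous theorem. The paper compresses this into a single displayed line, but the reasoning is the same.
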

\begin{proof} We have, for a maximal $k$-torus $T\subset G_L$, $Dim(T)=4$. Hence $T$ is maximal in ${\bf Aut}(A)$ as well and hence we have 
$$L\subset A^{G_L}\subset A^T=L.$$
\end{proof}
\noindent
{\bf Examples :}{\bf (i)} Let $A=J(D,\mu)$ be an Albert algebra with $D$ a degree $3$ division algebra. Let $L\subset D$ be a cubic cyclic subfield. Let 
$p\in L$ with $N_{L/k}(p)=1$. Then the 
automorphism $\mathcal{I}_p:A\rightarrow A,~(x,y,z)\mapsto (x,yp,p^{-1}z)\in{\bf Aut}(A/D_0)$. The subgroup $S=\{\mathcal{I}_p|p\in L,~N_{L/k}(p)=1\}$ is a torus in ${\bf SL}(1,D)$, in fact $S=R^{(1)}_{L/k}(\mathbb{G}_m)$. Hence $S$ is a torus with $Dim(S)=2$. If $(x,y,z)\in A^S$ then we have 
$(x,yp,p^{-1}z)=(x,y,z)$ for all $p\in S$. This implies in particular, $yp=y,~p^{-1}z=z$ for all $p\in S$. Hence $y=z=0$. Therefore $A^S=D_0$ and 
$Dim(A^S)=9$.\\
\noindent
{\bf (ii)} Consider again $A=J(D,\mu)$ with $D$ and $L$ as above. Let $p\in L$ with $N_{L/k}(p)=1$. Consider the automorphism $\psi_{p,1}:A\rightarrow A$ given by $(x,y,z)\mapsto (pxp^{-1},py,zp^{-1})$. Then the subgroup $S=\{\psi_{p,1}|p\in L,~N_{L/k}(p)=1\}$ is a torus in ${\bf Aut}(A)$. Now if $(x,y,z)\in A^S$, then 
$(pxp^{-1},py,zp^{-1})=(x,y,z)$ for all $p\in L,~N_{L/k}(p)=1$. Hence $y=z=0$ and $x\in L$ because $L\subset D$ is a maximal subfield of $D$. Therefore 
$A^S=L$ and $Dim(A^S)=3$. This, together with Example (i) shows that the dimension of the fixed point subalgebra of a torus does not determine the dimension of the torus.  
\section{\bf Tits-Weiss Conjecture}{\label{titsweiss}}
In this section, we take up the proof of the conjecture of Tits and Weiss on the structure group of Albert division algebras over a field. We begin with recalling some classical results on automorphisms of Cayley algebras. We will reprove a theorem of Jacobson (\cite{J2}), which immediately generalizes in the context of Albert algebras. 
\vskip3mm
\noindent
{\bf The case of Cayley algebras :}
\vskip1mm
\noindent
Let $\mathcal{C}$ be a Cayley algebra over a field $k$. Let $a\in\mathcal{C}$. Let $a_L,a_R$ respectively denote the left and the right multiplication maps on $\mathcal{C}$ for $a\in\mathcal{C}$. Then the {\bf flexible law} in $\mathcal{C}$ says that, for $x\in\mathcal{C},~a(xa)=(ax)a$. Hence $a_La_R=a_Ra_L$. Let $U_a=a_La_R$. An automorphism $\eta$ of $\mathcal{C}$ is said to be {\bf inner} if 
$\eta=U_{a_1}\cdots U_{a_r}$ for some $a_1,\cdots a_r\in\mathcal{C}$. We call a nontrivial automorphism $\tau$ of $\mathcal{C}$ a {\bf reflection} if $\tau^2=1$ and if the fixed point subalgebra $\mathcal{B}$ of $\tau$ is $4$--dimensional (hence is a quaternion subalgebra). We can decompose $\mathcal{C}$ as $\mathcal{B}\oplus\mathcal{B}^{\perp}$. Then $\tau$ is identity on $\mathcal{B}$ and $-1$ on $\mathcal{B}^{\perp}$. We have the following two theorems of Jacobson (\cite{J2}), we will give a different proof of the second theorem, only for Cayley division algebras, since that gives us the idea for the Albert algebra case.
\begin{theorem}Every reflection in a Cayley algebra is inner.
\end{theorem}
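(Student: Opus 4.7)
The plan is to exhibit $\tau$ explicitly as a product of five $U$-operators by identifying $U_v/n(v)$ (for a trace-zero $v$) with a hyperplane reflection in the norm form of $\mathcal{C}$. First I would record the universal identity
\[
U_a(x) \;=\; T(a,x)\, a \;-\; n(a)\, \bar{x}, \qquad a,x \in \mathcal{C},
\]
which follows from the degree-$2$ relation $a^2 - T(a)\, a + n(a) = 0$ together with left-alternativity in $\mathcal{C}$ (one verifies $(\bar{x}\bar{a})a = n(a)\,\bar{x}$ via $[\bar{x},\bar{a},a] = [\bar{a},a,\bar{x}] = 0$). For a trace-zero $v$ with $n(v) \neq 0$, this gives $U_v(1) = -n(v)$, while on the trace-zero subspace $\mathcal{C}_0$ (where $\bar{x} = -x$) it yields $U_v(x)/n(v) = x + T(v,x)\,v/n(v)$, which is exactly the hyperplane reflection $\rho_v$ on $\mathcal{C}_0$ with respect to the norm form.

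Next I would choose a norm-orthogonal basis $v_1, v_2, v_3, v_4$ of $\mathcal{B}^{\perp}$; each $v_i$ is automatically trace-zero with $n(v_i) \neq 0$. The product $\prod_{i=1}^{4} U_{v_i}/n(v_i)$ acts on $k\cdot 1$ as $(-1)^{4} = 1$, and on $\mathcal{C}_0$ as $\rho_{v_1}\rho_{v_2}\rho_{v_3}\rho_{v_4}$; since the $v_i$ are orthogonal and span $\mathcal{B}^{\perp}$, this composite of hyperplane reflections is $-1$ on $\mathcal{B}^{\perp}$ and $+1$ on its orthogonal complement $\mathcal{B}_0$ inside $\mathcal{C}_0$. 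Hence the product is the identity on $k\cdot 1 \oplus \mathcal{B}_0 = \mathcal{B}$ and is $-1$ on $\mathcal{B}^{\perp}$, i.e., it equals $\tau$. Equivalently,
\[
U_{v_1} U_{v_2} U_{v_3} U_{v_4} \;=\; c\,\tau, \qquad c := n(v_1)\,n(v_2)\,n(v_3)\,n(v_4).
\]

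To promote this to a genuine product of $U$-operators (rather than $c^{-1}$ times one), I need $c \in (k^*)^2$. But the norm restricted to $\mathcal{B}^{\perp} = \mathcal{B}\ell$ equals $n(\ell)\cdot n_{\mathcal{B}}$ under $b\ell \leftrightarrow b$, and $n_{\mathcal{B}}$ is a $2$-fold Pfister form whose discriminant is trivial modulo squares; hence $c \equiv n(\ell)^{4} \pmod{(k^*)^2}$ is a square. Writing $c = s^2$ and using $U_{1/s} = s^{-2}\cdot\mathrm{id}$, we obtain
\[
\tau \;=\; U_{v_1}\, U_{v_2}\, U_{v_3}\, U_{v_4}\, U_{1/s},
\]
a product of five $U$-operators, so $\tau$ is inner. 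The main obstacle is precisely this final scalar adjustment: absent the Pfister squareness of $c$ the preceding chain would identify $\tau$ only up to a scalar factor, and it is the discriminant triviality of $n_{\mathcal{B}}$ that rescues the argument uniformly over an arbitrary base field.
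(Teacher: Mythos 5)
Your proof is correct and complete. Note that the paper itself offers no argument for this statement: it is quoted from Jacobson's \emph{Composition algebras and their automorphisms} and only the subsequent theorem (every automorphism is a product of reflections) is reproved there. So there is nothing in the paper to match your argument against; what you have written is a self-contained substitute for the citation. The key steps all check out: the identity $U_a(x)=T(ax)\,a-n(a)\bar x$ follows exactly as you indicate from $ax=T(ax)-\bar x\bar a$ and $(\bar x\bar a)a=n(a)\bar x$; for trace-zero $v,x$ one has $T(vx)=-n(v,x)$, so $n(v)^{-1}U_v$ restricted to $\mathcal{C}_0$ is indeed the orthogonal reflection $\rho_v$, while it acts as $-1$ on $k\cdot 1$; an orthogonal basis $v_1,\dots,v_4$ of $\mathcal{B}^{\perp}$ (available since the paper's standing hypothesis excludes characteristic $2$, which is needed anyway for the notion of reflection) then gives $U_{v_1}\cdots U_{v_4}=c\,\tau$ with $c=\prod n(v_i)$; and $c$ is the determinant mod squares of $n|_{\mathcal{B}^{\perp}}\simeq\langle n(\ell)\rangle\otimes n_{\mathcal{B}}$, which is trivial because $n_{\mathcal{B}}$ is a $2$-fold Pfister form, so $U_{1/s}$ with $s^2=c$ absorbs the scalar. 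Two cosmetic points you may wish to make explicit: that each $v_i$ is automatically trace-zero because $1\in\mathcal{B}$ forces $\mathcal{B}^{\perp}\subset\mathcal{C}_0$ (you assert this but it deserves the one-line reason), and that the four commuting reflections $\rho_{v_i}$ make the order of the product irrelevant. Your closing remark is also apt: the discriminant triviality of the quaternion norm is genuinely the crux, since without it the construction only identifies $\tau$ up to a scalar similitude rather than as an element of the inner automorphism group as defined in the paper.
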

\begin{theorem}Every automorphism of $\mathcal{C}$ is a product of reflections. In particular, every automorphism of $\mathcal{C}$ is inner.
\end{theorem}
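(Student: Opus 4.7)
The strategy is to mimic the fixed-point approach that will be used for Albert algebras: produce a nontrivial $\phi$-fixed subalgebra and induct on its codimension. First I would establish the Cayley analogue of Theorem~\ref{fixedpoint}: every $\phi\in\mathrm{Aut}(\mathcal{C})$ fixes a nonzero trace-zero element $x_0$. The proof mirrors Theorem~\ref{fixedpoint}: by Jordan decomposition reduce to $\phi$ semisimple; pass to $\bar k$, where $\mathrm{Aut}(\mathcal{C})$ becomes split $G_2$; conjugate $\phi$ into a maximal torus $T$; and observe that $T$ lies, up to conjugation, inside the rank-$2$ subgroup $\mathrm{SL}_3\subset G_2$ fixing a pair of orthogonal idempotents $e_1,e_2$ in the Zorn vector-matrix model of the split Cayley algebra. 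Consequently $\phi$ fixes a conjugate of the trace-zero element $e_1-e_2$. In the division setting, $K=k(x_0)$ is then a quadratic \'etale subfield fixed pointwise by $\phi$.

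Next, viewing $\phi\in\mathrm{Aut}(\mathcal{C}/K)$, I exploit that this subgroup is a $k$-form of $A_2$ -- the special unitary group of the $3$-dimensional Hermitian $K$-module $K^{\perp}\subset\mathcal{C}$ (orthogonal complement of $K$ under the norm form, regarded as Hermitian via the nontrivial involution of $K/k$). Quaternion subalgebras $\mathcal{B}\supset K$ correspond bijectively to nondegenerate $K$-lines in $K^{\perp}$, and the Hermitian form on $K^{\perp}$ is anisotropic because $\mathcal{C}$ is a division algebra. I would then produce a $\phi$-stable such line -- possibly after composing $\phi$ with a reflection whose fixed quaternion contains $K$ -- yielding a $\phi$-invariant quaternion subalgebra $\mathcal{B}\supset K$. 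Skolem--Noether renders $\phi|_{\mathcal{B}}$ inner; extending this inner automorphism to $\mathcal{C}$ and multiplying by the reflection through $\mathcal{B}$ reduces to the case where $\phi$ fixes $\mathcal{B}$ pointwise. Writing $\mathcal{C}=\mathcal{B}\oplus\mathcal{B}\ell$, such a $\phi$ acts by $\ell\mapsto b\ell$ with $n(b)=1$, and one checks that every element of this $\mathrm{SL}_1(\mathcal{B})$-subgroup is a product of two reflections through quaternion subalgebras distinct from $\mathcal{B}$, completing the decomposition of $\phi$ into reflections.

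The ``in particular'' clause then follows from the preceding theorem: each reflection is inner, hence so is any product of reflections. The main obstacle is the middle step -- producing a $\phi$-stable quaternion subalgebra defined over $k$. Over $\bar k$ an eigenvector of $\phi|_{K^{\perp}}$ always exists, but over the base field the eigenvalues may lie in a cubic extension of $K$ without a $K$-rational eigenvector available; one must exploit the anisotropy of the Hermitian form together with the freedom to premultiply $\phi$ by reflections preserving $K$ in order to descend the argument to $k$.
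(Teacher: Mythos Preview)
Your fixed-point step is fine, though the paper gets it for free: $\mathrm{Aut}(\mathcal{C})$ acts on the $7$-dimensional quadratic space $\mathcal{C}_0$ through $SO(\mathcal{C}_0)$, and in odd dimension every special orthogonal transformation has $1$ as an eigenvalue (this is what the paper means by invoking Cartan--Dieudonn\'e). Your $G_2$-structure argument works but is unnecessary here.

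The genuine gap is the middle step, which you yourself flag. Searching directly for a $\phi$-stable $K$-line in $K^{\perp}$ is an eigenvector problem for a unitary operator over $K$, and nothing in your outline forces a $K$-rational eigenvector to exist; anisotropy of the Hermitian form does not help, and ``premultiplying by reflections preserving $K$'' is never made precise. The paper's device sidesteps this entirely, and it is exactly the maneuver that later generalizes to Albert algebras in Theorem~\ref{cyclic}: choose any quaternion subalgebra $\mathcal{H}\supset K$, extend the nontrivial Galois automorphism $\sigma$ of $K/k$ first to $\mathcal{H}$ (Skolem--Noether) and then to an automorphism $\widetilde{\sigma}$ of $\mathcal{C}$. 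Since $\widetilde{\sigma}$ stabilizes $\mathcal{H}$, Wonenburger's theorem~\cite{Won} already writes it as a product of reflections. Now $\psi=\eta\,\widetilde{\sigma}^{-1}$ stabilizes $K$ but acts on it nontrivially, so applying the fixed-point argument to $\psi$ yields a \emph{second} quadratic subfield $L\neq K$ fixed pointwise by $\psi$. Then $\psi$ stabilizes the quaternion subalgebra generated by $K$ and $L$, and Wonenburger finishes again. No eigenvalue descent is required.

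The paper also does not carry out your proposed endgame (reducing to $\phi\in\mathrm{SL}_1(\mathcal{B})$ and writing it explicitly as two reflections); it simply quotes Wonenburger as a black box for any automorphism stabilizing a quaternion subalgebra.
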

\begin{proof}We will prove the assertion when $\mathcal{C}$ is a division algebra. So let us assume $\mathcal{C}$ is a division algebra and 
$\eta$ is an automorphism of $\mathcal{C}$. Let $\mathcal{C}_0$ denote the subspace of elements of $\mathcal{C}$ of trace $0$. Then $\eta$ maps $\mathcal{C}_0$ to itself. By a theorem of Cartan and Dieudonn\'{e}, there is 
$x_0\neq 0\in\mathcal{C}_0$ such that $\eta(x_0)=x_0$. Let $K=k(x_0)$ be the subalgebra of $\mathcal{C}$ generated by $x_0$. Then $K/k$ is a quadratic field extension. Let $\sigma:K\longrightarrow K$ be the nontrivial $k$-automorphism of $K$. Let $\mathcal{H}\subset\mathcal{C}$ be a quaternion subalgebra containing $K$. By a Skolem-Noether type theorem for composition algebras (see \cite{KMRT}, 33.21), $\sigma$ extends to an automorphism $\mathcal{H}\longrightarrow\mathcal{H}$, which in turn extends to an automorphism $\widetilde{\sigma}:\mathcal{C}\longrightarrow\mathcal{C}$. Hence $\widetilde{\sigma}$ is an automorphism of $\mathcal{C}$ that stabilizes a quaternion subalgebra and hence is a product of reflections by a theorem of M. Wonenburger (\cite{Won}). Consider the automorphism $\psi=\eta\widetilde{\sigma}^{-1}$ of $\mathcal{C}$. Then $\psi$ does not fix $K$ pointwise, since $\widetilde{\sigma}$ does not. Let $L$ be a quadratic extension of $k$ contained in $\mathcal{C}$ that is fixed pointwise by $\psi$. Then $\psi(K)=K$ and $\psi(L)=L$. Hence $\psi$ stabilizes the quaternion subalgebra $\mathcal{B}$ generated by $K$ and $L$. By the aforementioned theorem of M. Wonenburger (\cite{Won}), $\psi$ is a product of reflections. Hence $\eta$ itself is a product of reflections.   
\end{proof}
\noindent
{\bf Back to Albert algebras :}\\
\noindent
We will now adopt a similar approach for Albert division algebras. To begin with, we need to know if every automorphism of an Albert algebra fixes a nonzero element of trace zero. This has already been settled in Theorem (\ref{fixedpoint}). 
We have shown that every automorphism of an Albert division algebra $A$ fixes a cubic subfield 
of $A$. 
\begin{definition} We define an automorphism of an Albert division algebra to be {\bf cyclic} if it fixes a cyclic cubic subfield of $A$. 
\end{definition}
\begin{definition} An automorphism of an Albert algebra is said to be of $A_2$-type if it stabilizes a simple $9$-dimensional subalgebra. 
\end{definition}
This terminology is justified since such an automorphism then belongs to a $k$-subgroup of ${\bf Aut}(A)$ type $A_2$ (see \cite{KMRT}, Chapter IX, 39B). 
\begin{theorem}{\label{cyclic}} Let $A$ be a first Tits construction Albert division algebra over $k$. Let $G=Aut(A)$ and $\phi\in G$. Assume $\phi$ is cyclic. Then $\phi$ is a product of two automorphisms of type $A_2$. 
\end{theorem}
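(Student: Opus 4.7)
The strategy mirrors the preceding Cayley-algebra argument: I will encode the Galois action on $L$ as an $A_2$-type automorphism of $A$, and reduce the problem to showing that the complementary factor also stabilizes a $9$-dimensional subalgebra. Let $L\subset A$ be the cyclic cubic subfield fixed pointwise by $\phi$, and let $\sigma$ generate $\operatorname{Gal}(L/k)$. Since $A$ is a first Tits construction and $L$ is cyclic cubic, Petersson-Racine (Cor.~4.5 of \cite{PR2}) lets me extend $L\hookrightarrow A$ to an embedding $J(L,\lambda)\hookrightarrow A$ for some $\lambda\in k^*$; for cyclic $L$, the $9$-dimensional Jordan algebra $J(L,\lambda)$ is isomorphic to $D_+$, where $D=(L/k,\sigma,\lambda)$ is the cyclic algebra. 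Since $A$ is a division algebra, $\lambda\notin N_{L/k}(L^*)$ and $D$ is a division algebra. Writing $A\simeq J(D,\mu)$ with $L\subseteq D_+$, I pick $z\in D$ with $z^3=\lambda$ and $\operatorname{Int}(z)|_L=\sigma$, and set
\[
\widetilde{\sigma}:=\mathcal{I}_z=\psi_{z,z}\in\operatorname{Aut}(A).
\]
Then $\widetilde{\sigma}$ stabilizes $D_+$ (so it is an $A_2$-type automorphism) and restricts to $\sigma$ on $L$.

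\textbf{Decomposition.} Set $\phi_2:=\widetilde{\sigma}^{-1}\phi$, so that $\phi=\widetilde{\sigma}\cdot\phi_2$. Since $\phi$ fixes $L$ pointwise while $\widetilde{\sigma}^{-1}|_L=\sigma^{-1}$, I obtain $\phi_2|_L=\sigma^{-1}$; in particular $\phi_2$ stabilizes $L$ as a set but does not fix it pointwise. Applying Corollary \ref{fixedpointcor} to $\phi_2$ produces a cubic subfield $L'\subseteq A$ fixed pointwise by $\phi_2$, and because $\sigma^{-1}\neq\operatorname{id}$ we must have $L'\neq L$. Let $S\subseteq A$ denote the Jordan subalgebra generated by $L\cup L'$. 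Since $\phi_2$ stabilizes $L$ and fixes $L'$ pointwise, $\phi_2(S)=S$; hence it suffices to show that $S$ is a $9$-dimensional simple subalgebra of $A$.

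\textbf{Main obstacle.} The technical heart of the argument is to rule out $S=A$. The natural route is a two-generator bound: by Shirshov-Cohn the Jordan algebra generated by two elements is special, and the cubicity of the minimal equations of elements of an Albert algebra should force the Jordan subalgebra generated by two cubic subfields of $A$ to have dimension at most $9$. Combined with the fact that the Jordan division subalgebras of an Albert division algebra have dimensions in $\{1,3,9,27\}$, and with $L\subsetneq S$, this will force $\dim_kS=9$; simplicity of $S$ then follows from the classification of $9$-dimensional Jordan division subalgebras of $A$ (either of the form $D'_+$ or of the form $\mathcal{H}(B,\tau)$). With $S$ a $9$-dimensional simple subalgebra stabilized by $\phi_2$, the automorphism $\phi_2$ is of $A_2$-type, and $\phi=\widetilde{\sigma}\cdot\phi_2$ is the desired expression of $\phi$ as a product of two $A_2$-type automorphisms.
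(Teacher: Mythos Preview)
Your decomposition is the same as the paper's: embed $L$ in some $D_+\subset A$, lift a generator $\sigma$ of $\mathrm{Gal}(L/k)$ to $\widetilde{\sigma}=\mathcal{I}_a$ stabilizing $D_+$, set $\Psi=\widetilde{\sigma}^{-1}\phi$ (the paper composes in the other order, $\Psi=\phi\cdot\widetilde{\sigma}$, which is immaterial), observe that $\Psi$ maps $L$ to itself without fixing it, invoke Corollary~\ref{fixedpointcor} to get a second cubic subfield $M\neq L$ fixed by $\Psi$, and conclude that $\Psi$ stabilizes $S=\langle L,M\rangle$.

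The only substantive divergence is at what you correctly flag as the main obstacle: showing $\dim_k S=9$. The paper does not argue this; it simply cites \cite{PR3},~2.10, which states precisely that two distinct cubic subfields of an Albert (division) algebra generate a $9$-dimensional subalgebra. Your Shirshov--Cohn sketch is headed in a reasonable direction but is not a proof as written. Speciality by itself gives no dimension bound, and the sentence ``the cubicity of the minimal equations\ldots should force\ldots dimension at most~$9$'' is a heuristic, not an argument: in an associative setting two elements with cubic minimal polynomials can generate something much larger than $9$-dimensional, so the Jordan/degree-$3$ structure has to be used in an essential way. What actually works is: $S$ is special (Shirshov--Cohn), $S$ is a division Jordan algebra (as a subalgebra of the division algebra $A$), and $S$ is cubic (it inherits the generic degree from $A$); then the classification of simple special cubic Jordan algebras forces $\dim_k S\in\{3,9\}$, and $L\subsetneq S$ rules out~$3$. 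But that classification step is exactly the content encapsulated in the Petersson--Racine reference, so you should just cite it rather than re-derive it.

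Minor point: you do not need the explicit cyclic presentation $D=(L/k,\sigma,\lambda)$ and the element $z$ with $z^3=\lambda$; the paper only uses Skolem--Noether to produce some $a\in D^*$ with $\mathrm{Int}(a)|_L=\sigma$, which is all that is required.
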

\begin{proof}Let $L\subset A$ be a cubic cyclic subfield that is fixed pointwise by $\phi$. Let $D$ be a degree $3$ central division algebra over $k$ such that $D_+\subset A$ and $L\subset D_+$ (see \cite{PR2}, Cor. 4.6). Then the inclusion $D_+\subset A$ extends to an isomorphism $\theta:A\longrightarrow J(D,\mu)$ for a suitable $\mu\in k^*$ such that $\theta(D_+)=D_0$. Let $Gal(L/k)=<\sigma>$.  By the classical Skolem-Noether theorem, $\sigma:L\longrightarrow L$ extends to an automorphism $\widetilde{\sigma}:D\longrightarrow D$. Hence there exists $a\in D^*$ such that $\widetilde{\sigma}(x)=axa^{-1}$ for all $x\in D$. This in turn extends to an 
automorphism $\widetilde{\sigma}_D:J(D,\mu)\longrightarrow J(D,\mu)$ by $\widetilde{\sigma}_D=\mathcal{I}_a$. 
Consider $\Psi=\phi\theta^{-1}\widetilde{\sigma}_D\theta:A\longrightarrow A$. Note that, for $x\in L,~
\theta^{-1}\widetilde{\sigma}_D\theta(x)=\sigma(x)$. 
Hence $\theta^{-1}\widetilde{\sigma}_D\theta\notin G_L$. Since $\phi\in G_L$, this means that 
$\Psi\notin G_L$. Let $M\subset A$ be a cubic field extension of $k$ such that $\Psi\in G_M$ (this is possible by Corollary \ref{fixedpointcor}). Then 
the subalgebra $S=<L,M>$ is $9$-dimensional (see \cite{PR3}, 2.10) and $\Psi(L)=L,\Psi(M)=M$. 
Hence $\Psi(S)=S$. But the automorphism $\Pi=\theta^{-1}\widetilde{\sigma}_D
\theta$ maps the $9$-dimensional subalgebra $\theta^{-1}(D_0)$ to itself. 
Now $\phi=\Psi\Pi^{-1}$, a product of automorphisms of type $A_2$ as claimed.   
\end{proof}
\begin{theorem}{\label{puretypea2}}Let $A$ be a pure first Tits construction Albert division algebra. Then every automorphism of $A$ is a product of automorphisms of $A$ of type $A_2$.
\end{theorem}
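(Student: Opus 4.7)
The plan is to combine the fixed point theorem for automorphisms (Corollary \ref{fixedpointcor}) with the characterization of pure first Tits constructions, and then invoke Theorem \ref{cyclic}. The strategy reduces the statement to verifying that, under the pure first Tits hypothesis, every automorphism is automatically cyclic in the sense of the definition preceding Theorem \ref{cyclic}, at which point the result follows.

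Concretely, let $\phi\in\mathrm{Aut}(A)$ be arbitrary. By Corollary \ref{fixedpointcor}, there exists a cubic subfield $L\subset A$ such that $\phi$ fixes $L$ pointwise. Since the standing characteristic assumption is $\mathrm{char}(k)\neq 2,3$, any cubic field extension of $k$ is separable, so $L/k$ is a separable cubic subfield of $A$. Now the hypothesis that $A$ is a pure first Tits construction, together with the equivalence (i)$\Leftrightarrow$(ii) of the characterization theorem for pure first Tits constructions stated earlier, forces every separable cubic subfield of $A$ to be cyclic. In particular $L/k$ is a cyclic cubic extension, so $\phi$ is a cyclic automorphism in the sense of the definition preceding Theorem \ref{cyclic}.

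Having verified that $\phi$ is cyclic, Theorem \ref{cyclic} applies directly: since $A$ is a first Tits construction Albert division algebra and $\phi\in\mathrm{Aut}(A)$ is cyclic, $\phi$ is a product of two automorphisms of type $A_2$. In particular, $\phi$ is a product of automorphisms of type $A_2$, which is exactly the conclusion of Theorem \ref{puretypea2}.

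There is essentially no genuine obstacle here: the theorem is a formal consequence of three ingredients already in place, namely the fixed point theorem producing a pointwise-fixed cubic subfield, the algebraic characterization of pure first Tits constructions in terms of cyclicity of separable cubic subfields, and Theorem \ref{cyclic} on cyclic automorphisms. The only point requiring a brief remark is the separability of $L/k$, which is automatic under the blanket hypothesis $\mathrm{char}(k)\neq 2,3$. The deeper work is already encoded in Theorem \ref{cyclic}, where the Skolem--Noether extension of the Galois automorphism of $L/k$ to $D$ and then to $J(D,\mu)$ produces the explicit decomposition $\phi=\Psi\Pi^{-1}$ into two $A_2$-type automorphisms.
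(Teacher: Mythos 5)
Your proof is correct, but it takes a genuinely different route from the paper: you show that under the pure first construction hypothesis the non-cyclic case is vacuous, whereas the paper treats it head-on. The paper splits into the cases $\phi$ cyclic / not cyclic; in the latter it picks a cyclic cubic subfield $E=k(x)$, compares $E$ with $F=\phi(E)$, and in either subcase ($E=F$ or $E\neq F$) composes $\phi$ with an explicit $A_2$-type automorphism built from a Skolem--Noether extension so as to reduce to the cyclic case, using pureness only to identify $\langle E,F\rangle$ with some $D_+$. Your argument instead observes that Corollary \ref{fixedpointcor} always produces a cubic subfield $L$ fixed pointwise by $\phi$, that $L/k$ is separable since $\mathrm{char}(k)\neq 3$, and that the implication (i)$\Rightarrow$(ii) of Petersson's characterization then forces $L/k$ to be cyclic, so every automorphism of a pure first construction is automatically cyclic and Theorem \ref{cyclic} finishes. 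This is shorter and yields the sharper conclusion that $\phi$ is a product of \emph{two} $A_2$-type automorphisms, whereas the paper's non-cyclic branch produces three. What the paper's longer argument buys is independence from the separability hypothesis: in characteristic $3$ the fixed cubic subfield could be purely inseparable, and your appeal to (i)$\Rightarrow$(ii) would not apply, while the paper's case analysis still goes through; under the blanket hypothesis $\mathrm{char}(k)\neq 2,3$ your proof is complete.
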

\begin{proof}Let $A$ be a pure first construction Albert division algebra over $k$. Let $\phi\in Aut(A)$. If $\phi$ is cyclic, then by the theorem above, $\phi$ is a product of automorphisms of type $A_2$. If $\phi$ is not cyclic, choose a cyclic cubic subfield $E\subset A$. Let $E=k(x)$ and let $y=\phi(x)$. Then $E=k(x)\subset A$ is a cyclic cubic subfield and $F=\phi(E)$ is also a cyclic cubic subfield of $A$. Since $\phi$ is not cyclic, $x\neq y$. If $E=F$, we must have $\phi(x)=\sigma(x)$, where $Gal(E/k)=<\sigma>$. Let $D$ be a degree $3$ central division algebra over $k$ such that $E\subset D_+\subset A$. Let $\theta:A\simeq J(D,\mu)$ be an isomorphism for some $\mu\in k^*$. By the Skolem-Noether theorem, the automorphism $\sigma:E\rightarrow E$ extends to an automorphism $\widetilde{\sigma}:D\rightarrow D$, which in turn extends to an automorphism $\widetilde{\sigma}_D:J(D,\mu)\rightarrow J(D,\mu)$ as $\widetilde{\sigma}_D=\mathcal{I}_a$, where $a\in D^*$ is such that $\widetilde{\sigma}(z)=aza^{-1},~z\in D^*$. Consider now $\Psi=\theta\widetilde{\sigma}_D^{-1}\theta^{-1}\phi$. Then $\Psi\in Aut(A)$ and 
$$\Psi(x)=\theta\widetilde{\sigma}_D^{-1}\phi(x)=\widetilde{\sigma}^{-1}\sigma(x)=x.$$
Hence $\Psi$ fixes $E$ pointwise and is therefore cyclic. By the theorem above, $\Psi$ is a product of $A_2$-type automorphisms. Also 
$\theta\widetilde{\sigma}_D^{-1}\theta^{-1}$ stabilizes the subalgebra $\theta(D_+)$ and hence is of type $A_2$. Therefore $\phi$ is a product of automorphisms of type $A_2$. If $E\neq F$, we consider $<E,F>$, the subalgebra generated by $E$ and $F$. Then the dimension of this subalgebra is $9$ (see \cite{PR3}, 2.10). Since $A$ is a pure first construction, 
$<E,F>=D_+$ for a central division algebra $D$ of degree $3$ over $k$. The restriction of $\phi$ to $E$ is an isomorphism with $F$. Hence, by the classical Skolem-Noether theorem, there is an automorphism $\theta_D:D_+\longrightarrow D_+$ extending $\phi^{-1}:F\simeq E$. In particular, $\theta_D(y)=\phi^{-1}(y)=x$. 
Let $\widetilde{\theta_D}:A\longrightarrow A$ be an extension of $\theta_D$ to an automorphism of $A$ as before. Consider 
$\phi_1=\widetilde{\theta_D}\phi$. Then we have, 
$$\phi_1(x)=\widetilde{\theta_D}\phi(x)=\widetilde{\theta_D}(\phi(x))=\widetilde{\theta_D}(y)=\theta_D(y)=x.$$ 
Therefore $\phi_1$ fixes $x$ and hence fixes $E$ pointwise, and hence $\phi_1$ is a cyclic automorphism of $A$. By the previous theorem, $\phi_1$ is a product of automorphisms of type $A_2$. But $\phi_1=\widetilde{\theta_D}\phi$ and $\widetilde{\theta_D}$ an 
$A_2$ type automorphism, therefore $\phi$ is a product of automorphisms of type $A_2$. 
\end{proof}
Our next goal is to prove that every automorphism of type $A_2$ for pure first constructions is inner. This will be carried out in two steps. We will first prove that every automorphism which stabilizes a $9$-dimensional subalgebra of $A$ is a product of automorphisms fixing $9$-dimensional subalgebras. We will then prove that an automorphism that fixes a $9$-dimensional subalgebra is indeed inner. While the first assertion is easy to prove, the second one is delicate and needs bit more care. We will prove better results for pure first constructions, while proving general statements for first constructions. We begin with
\begin{theorem}{\label{purefixeda2}} Let $A$ be an Albert division algebra which is a pure first construction. Let $\phi\in Aut(A)$ be such that $\phi$ stabilizes a $9$-dimensional subalgebra. Then $\phi$ is a product of automorphisms that fix $9$-dimensional subalgebras.
\end{theorem}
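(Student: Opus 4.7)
My plan is to use the explicit parametrization of automorphisms stabilizing $D_0$ inside a first Tits construction $J(D,\mu)$. Suppose $\phi$ stabilizes the $9$-dimensional subalgebra $S\subset A$. Since $A$ is a pure first Tits construction, the remarks of Section~\ref{albert} force $S=D_+$ for some degree~$3$ central division algebra $D$ over $k$. I choose $\mu\in k^*$ and an isomorphism $A\simeq J(D,\mu)$ sending the given inclusion $D_+\subset A$ to the standard inclusion $D_0\subset J(D,\mu)$, and I identify $A=J(D,\mu)$ via this isomorphism.

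The structure theorem for stabilizers of $D_0$ (recorded just above Lemma~\ref{extendaut}) now yields $\phi=\psi_{a,b}$ for some $a,b\in D^*$ with $N_D(a)=N_D(b)$. Using the composition law $\psi_{a,b}\psi_{c,d}=\psi_{ac,bd}$ I factor
\[
\phi \;=\; \psi_{a,a}\cdot\psi_{1,a^{-1}b}\;=\;\mathcal{I}_a\cdot\mathcal{J}_{b^{-1}a},
\]
where $N_D(b^{-1}a)=1$. The second factor $\mathcal{J}_{b^{-1}a}$ therefore lies in $\mathbf{Aut}(A/D_+)\simeq\mathbf{SL}(1,D)$ and fixes the $9$-dimensional subalgebra $D_+$ pointwise.

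For the first factor, if $a\in k^*$ then $\mathcal{I}_a=\mathrm{id}$ and there is nothing to prove. Otherwise $L:=k(a)$ is a cubic subfield of $D$, and $C_D(a)=L$ because $\deg D=3$. A coordinate-by-coordinate inspection of the first Tits construction multiplication shows that each component of a product is built only from Jordan products, the map $\widetilde{\;\cdot\;}$, and the cross product $\times$ on $D$, all of which preserve the commutative associative subalgebra $L\subset D$. Hence $L\oplus L\oplus L\subset J(D,\mu)$ is a $9$-dimensional subalgebra of $A$, and $\mathcal{I}_a$, which acts coordinatewise by conjugation by $a$, fixes it pointwise. By the pure first construction hypothesis, this subalgebra must have the form $D'_+$ for some degree~$3$ central division algebra $D'$ over $k$, completing the factorisation.

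The only mild technical obstacle I foresee is the closure verification for $L\oplus L\oplus L$: it is routine but must be carried out carefully because the Tits product genuinely mixes all three copies of $D$. Note that the pure first construction hypothesis enters the argument twice, once at the outset to identify $S$ as $D_+$, and once at the end to recognize the pointwise-fixed subalgebra of $\mathcal{I}_a$ as some $D'_+$; without this hypothesis one would still obtain a $9$-dimensional pointwise-fixed subalgebra but could not guarantee it is of the required form.
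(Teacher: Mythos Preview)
Your proof is correct and follows essentially the same route as the paper: identify $A$ with $J(D,\mu)$, write the stabilizing automorphism as $\psi_{a,b}$, and factor it as $\mathcal{I}_a$ times a $\mathcal{J}$-type automorphism, the former fixing $L\oplus L\oplus L$ and the latter fixing $D_0$. Your closing remark that the pure hypothesis is needed a second time to recognize $L\oplus L\oplus L$ as some $D'_+$ is unnecessary---the theorem only asks for a $9$-dimensional fixed subalgebra, not one of a prescribed shape---but this does not affect the validity of the argument.
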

\begin{proof}Suppose $\phi\in Aut(A)$ stabilizes the subalgebra $D_+\subset A$. The inclusion $D_+\hookrightarrow A$ induces an isomorphism 
$\theta:A\longrightarrow J(D,\mu)$ for some $\mu\in k^*$ such that $\theta(D_+)=D_0$. Then $\psi=\theta\phi\theta^{-1}$ is an automorphism of $J(D,\mu)$ and $\psi$ stabilizes 
$\theta(D_+)=D_0$. By Lemma (\ref{extendaut}), we can find elements $a,b\in D^*$ such that $N_D(a)=N_D(b)$ and $\psi$ is given by 
$$\psi(x,y,z)=(axa^{-1},ayb^{-1},bza^{-1}).$$
Recall that, for $a\in D^*,~\mathcal{I}_a$ denotes the automorphism of $J(D,\mu)$ given by 
$$\mathcal{I}_a(x,y,z)=(axa^{-1},aya^{-1},aza^{-1})$$ 
and, for $p\in SL(1,D)$, $\mathcal{J}_p$ denotes the automorphism of $J(D,\mu)$ given by 
$$\mathcal{J}_p(x,y,z)=(x,yp,p^{-1}z).$$
We have 
\begin{eqnarray*}
\mathcal{J}_{ab^{-1}}\mathcal{I}_a(x,y,z) & = & \mathcal{J}_{ab^{-1}}(axa^{-1},aya^{-1},aza^{-1})\\
& = & (axa^{-1},aya^{-1}ab^{-1},ba^{-1}aza^{-1})\\
& = & (axa^{-1},ayb^{-1},bza^{-1})=\psi(x,y,z).
\end{eqnarray*}
Hence $\psi=\mathcal{J}_{ab^{-1}}\mathcal{I}_a$. We only have to check that $\mathcal{I}_a$ fixes $9$ dimensions. But this is clear, 
since either $a\in k^*$, in which case 
$\mathcal{I}_a=1$ or $L=k(a)$ is a cubic extension of $k$ contained in $D$ and then 
$\mathcal{I}_a$ fixes the subalgebra $S=L\times L\times L$ pointwise. Now 
$$\phi=
\theta^{-1}\psi\theta=(\theta^{-1}\mathcal{J}_{ab^{-1}}\theta)
(\theta^{-1}\mathcal{I}_a\theta).$$
The automorphism $\theta^{-1}\mathcal{J}_{ab^{-1}}\theta$ fixes $\theta^{-1}(D_0)\subset A$ pointwise while the automorphism 
$\theta^{-1}\mathcal{I}_a\theta$ fixes 
the subalgebra $\theta^{-1}(L\times L\times L)\subset A$ pointwise. 
\end{proof}
\vskip2mm
\noindent
{\bf Some computations with $U$ operators (First construction)}
We refer to (\cite{PR3}) for basic formulae on $U$-operators for first construction Albert algebras.
Let $A$ be an Albert algebra over $k$. Assume that $A$ is a first Tits construction, $A=J(D,\mu)$. For $x=(x_0,x_1,x_2),~y=(y_0,y_1,y_2)\in A$, we define 
$$x\times y=(x_0\times y_0-x_1y_2-y_1x_2,\mu^{-1}x_2\times y_2-x_0y_1-y_0x_1,\mu x_1\times y_1-x_2y_0-y_2x_0),$$
$$T(x,y)=t(x_0,y_0)+t(x_1,y_2)+t(x_2,y_1)$$ and 
$$x^{\#}=(x_0^{\#}-x_1x_2,\mu^{-1}x_2^{\#}-x_0x_1,\mu x_1^{\#}-x_2x_0),$$here $t(a,b)=t(ab)$ and $t$ is the reduced trace on $D$. The $U$-operators on $A$ are given by 
$$U_x(y)=T(x,y)x-x^{\#}\times y.$$ 
We have, using these formulae, for $a,b,c\in D$, 
$$(a,0,0)^{\#}=(a^{\#},0,0),~(0,b,0)^{\#}=(0,0,\mu b^{\#}),
~(0,0,c)^{\#}=(0,\mu^{-1}c^{\#},0).$$
We have therefore,
\begin{eqnarray*}
(a,0,0)^{\#}\times(d,e,f) & = & (a^{\#},0,0)\times (d,e,f) ~~~~=  (a^{\#}\times d, -a^{\#}e,-fa^{\#}),\\
(0,b,0)^{\#}\times(d,e,f) & = & (0,0,\mu b^{\#})\times (d,e,f)~~~ = (-\mu eb^{\#},b^{\#}\times f, -\mu b^{\#}d),\\
(0,0,c)^{\#}\times (d,e,f) & = & (0,\mu^{-1}c^{\#},0)\times(d,e,f) = (-\mu^{-1} c^{\#}f,-\mu^{-1}dc^{\#}, c^{\#}\times e).
\end{eqnarray*}
We also note that for $x,y\in D,~t(xy)x-x^{\#}\times y=xyx$.
We have,
$$T((a,0,0),(d,e,f))=t(ad).$$ Hence 
\begin{eqnarray*}
U_{(a,0,0)}(d,e,f) & = & t(ad)(a,0,0)-(a,0,0)^{\#}\times (d,e,f)\\
& = & (t(ad)a,0,0)-(a^{\#}\times d, -a^{\#}e, -fa^{\#})\\
& = & (t(ad)a-a^{\#}\times d, a^{\#}e, fa^{\#})\\
& = & (ada, a^{\#}e, fa^{\#}). 
\end{eqnarray*}
Again,
$$T((0,b,0)(d,e,f))=t(bf)$$ and hence
\begin{eqnarray*}
U_{(0,b,0)}(d,e,f) & = & t(bf)(0,b,0)-(0,b,0)^{\#}\times (d,e,f)\\
& = & (0,t(bf)b,0)-(-\mu eb^{\#},b^{\#}\times f,-\mu b^{\#}d)\\
& = & (\mu eb^{\#}, t(bf)b-b^{\#}\times f,\mu b^{\#}d)\\
& = & (\mu eb^{\#}, bfb, \mu b^{\#}d).
\end{eqnarray*}
Finally, 
$$T((0,0,c),(d,e,f))=t(ce),$$ 
hence 
\begin{eqnarray*}
U_{(0,0,c)}(d,e,f) & = & t(ce)(0,0,c)-(0,0,c)^{\#}\times (d,e,f)\\
& = & (0,0,t(ce)c)-(-\mu^{-1}c^{\#}f,-\mu^{-1}dc^{\#}, c^{\#}\times e)\\
& = & (\mu^{-1}c^{\#}f,\mu^{-1}dc^{\#}, t(ce)c-c^{\#}\times e)\\
& = & (\mu^{-1}c^{\#}f,\mu^{-1}dc^{\#}, cec).
\end{eqnarray*}
We summarize the above computations as 
\begin{eqnarray*}
U_{(a,0,0)}(d,e,f) & = & (ada,a^{\#}e,fa^{\#}),\\
U_{(0,b,0)}(d,e,f) & = & (\mu eb^{\#}, bfb, \mu b^{\#}d),\\
U_{(0,0,c)}(d,e,f) & = & (\mu^{-1}c^{\#}f,\mu^{-1}dc^{\#}, cec).
\end{eqnarray*}
\vskip1mm
\noindent
{\bf Computations with $U$-operators (second construction):}
We now work with $A=J(B,\sigma,u, \mu)$, a second Tits construction Albert algebra. We have, 
for $x=(a,b),~y=(c,d)\in A$
$$x^{\#}=(a^{\#}-bu\sigma(b), \bar{\mu}\sigma(b)^{\#}u^{-1}-ab),~y^{\#}=(c^{\#}-du\sigma(d),\bar{\mu}\sigma(d)^{\#}u^{-1}-cd)$$
$$T(x,y)=t(ac)+t(bu\sigma(d))+\overline{t(bu\sigma(d))}$$
$$x\times y=(a\times c-bu\sigma(d)-du\sigma(b),\bar{\mu}(\sigma(b)\times\sigma(d))u^{-1}-ad-cb)$$
Hence we have $(a,0)^{\#}=(a^{\#},0)$ and 
$$(a,0)^{\#}\times(c,d)=(a^{\#},0)\times (c,d)=(a^{\#}\times c, -a^{\#}d).$$
Now, 
$$U_x(y)=T(x,y)x-x^{\#}\times y.$$
For $x,y\in\mathcal{H}(B,\sigma)$ we note that $U_x(y)=xyx$. 
Therefore we have
\begin{eqnarray*}
U_{(a,0)}(c,d) & = & T((a,0),(c,d))(a,0)-(a,0)^{\#}\times (c,d)\\
               & = & t(ac)(a,0)-(a^{\#}\times c, -a^{\#}d)\\
               & = & (t(ac)a-a^{\#}\times c, a^{\#}d)\\
               & = & (U_a(c),a^{\#}d)\\
               & = & (aca,a^{\#}d).
\end{eqnarray*}
Now we come to the most crucial part of this paper. We will prove some key results, which will facilitate the proof of the Tits-Weiss conjecture. 
First we dispose of a basic computation necessary:
\begin{theorem}{\label{jpinner}}Let $A=J(D,\mu)$ be a first Tits construction Albert algebra and $p\in SL(1,D)$ be a commutator in $D^*$. Then $\mathcal{J}_p$ is inner.
\end{theorem}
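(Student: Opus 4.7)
The plan is to deduce Theorem \ref{jpinner} from a direct computation built on the explicit formulas for $U_{(a,0,0)}$, $U_{(0,b,0)}$, $U_{(0,0,c)}$ established just above the statement, combined with the group law for the automorphisms $\psi_{a,b}$.

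First, I would record a purely algebraic identity expressing $\mathcal{J}_p$ in a form amenable to $U$-operator calculations. Writing $p = aba^{-1}b^{-1} = (ab)(ba)^{-1}$, and using $\psi_{x,y}\psi_{u,v} = \psi_{xu,yv}$ together with $\mathcal{I}_c = \psi_{c,c}$, one checks immediately that
\[
\mathcal{J}_p \;=\; \psi_{ab,\,ba}\circ \mathcal{I}_{(ab)^{-1}}.
\]
This decomposes $\mathcal{J}_p$ into two automorphisms that stabilize $D_0$, and reduces the problem to exhibiting this product as an element of $Instr(A)$.

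Second, I would produce the required product of $U$-operators. The ingredients are the following facts visible in the formulas: $U_{(a,0,0)}$ stabilizes each component $D_i$ and restricts on $D_0$ to the associative $U$-operator $d\mapsto ada$; while $U_{(0,b,0)}$ and $U_{(0,0,c)}$ cycle the three components $D_0,D_1,D_2$ in opposite directions, so that suitable triple products restabilize $D_0$. Using the fundamental formula $U_{U_x y} = U_x U_y U_x$, I would also rewrite $U_{(0,a^{\#},0)} = U_{(a,0,0)}U_{(0,1,0)}U_{(a,0,0)}$, which lets one trade operators of the form $U_{(0,\cdot,0)}$ for operators supported in the first slot. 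The goal is to build, out of $U_{(a,0,0)}$, $U_{(b,0,0)}$, their inverses, and $U_{(0,1,0)}$, a composition whose restrictions to $D_0$, $D_1$, $D_2$ are respectively $\mathrm{id}$, right multiplication by $p$, and left multiplication by $p^{-1}$.

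The key obstacle is precisely the assembly in this second step. The naive candidate, the commutator $[U_{(a,0,0)}, U_{(b,0,0)}]$, fails: using $L$--$R$ notation on $D_+$, one computes that it acts on $D_0$ as $L_{[a,b]}R_{[b^{-1},a^{-1}]}$, which is \emph{not} the identity, and on $D_1, D_2$ by left, resp.\ right multiplication by commutators of $a^{-1}$ and $b^{-1}$ rather than right, resp.\ left multiplication by $p$ itself. So the correction needed to pass from this commutator to $\mathcal{J}_p$ is nontrivial: one must insert conjugations by $U_{(0,1,0)}$ (which swaps the roles of the $D_i$, and hence interchanges left and right multiplications) and use the identity $U_{(0,a^{\#},0)} = U_{(a,0,0)}U_{(0,1,0)}U_{(a,0,0)}$ together with $p \in SL(1,D)$ (so $a^{\#}, b^{\#}$ simplify when restricted to the $D_0$ action, modulo the hypothesis on $N_D$). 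With the right choice of intermediate elements, the $D_0$ contributions cancel using $n(p) = n([a,b]) = 1$, while the $D_1$, $D_2$ contributions collapse to pure right, resp.\ left multiplication by $p$, $p^{-1}$. Verifying this cancellation is the one genuinely computational step; once it is carried out, the resulting product of $U$-operators equals $\mathcal{J}_p$ by construction, proving that $\mathcal{J}_p \in Instr(A)$.
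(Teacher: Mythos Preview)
Your proposal has a genuine gap: the entire content of the theorem lies in your ``second step,'' and you never actually carry it out. You correctly identify the ingredients --- the explicit formulas for $U_{(a,0,0)}$, $U_{(0,b,0)}$, $U_{(0,0,c)}$ and the fact that the latter two cycle the summands --- and you correctly observe that the naive commutator $[U_{(a,0,0)},U_{(b,0,0)}]$ does not give $\mathcal{J}_p$. But then you write ``with the right choice of intermediate elements, the $D_0$ contributions cancel \ldots'' without ever saying what that choice is. A proof of this theorem must exhibit a specific product of $U$-operators equal to $\mathcal{J}_p$; a promise that such a product exists is not a proof.

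Your first step is also a detour. The decomposition $\mathcal{J}_p = \psi_{ab,ba}\circ \mathcal{I}_{(ab)^{-1}}$ is correct, but it does not help: in the paper's logical order, the innerness of $\mathcal{I}_a$ (Theorem~\ref{generalinner}) is proved \emph{using} the present theorem via Corollary~\ref{sl1dinner}, so invoking it here would be circular. You seem to sense this, since your second step abandons the decomposition entirely.

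For comparison, the paper's argument is a single direct computation. Writing $p=jij^{-1}i^{-1}$ so that $ji=pij$, one checks line by line that
\[
U_{(0,0,1)}\,U_{((ij)^{-1},0,0)}\,U_{(i,0,0)}\,U_{(j,0,0)}\,U_{(0,1,0)} \;=\; \mathcal{J}_p.
\]
The outer operators $U_{(0,1,0)}$ and $U_{(0,0,1)}$ cycle the three summands; the middle block $U_{((ij)^{-1},0,0)}U_{(i,0,0)}U_{(j,0,0)}$ acts on the first slot as $d\mapsto (ij)^{-1}ij\,d\,ji(ij)^{-1}=dp$ (using $ji=pij$), on the second slot as left multiplication by $((ij)^{-1})^{\#}i^{\#}j^{\#}=iji^{-1}j^{-1}=p^{-1}$, and on the third slot trivially. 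The point is not a commutator of $U$-operators but rather the product $U_{(i,0,0)}U_{(j,0,0)}$ followed by the single correction $U_{((ij)^{-1},0,0)}$; this is the ``right choice'' your proposal is missing.
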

\begin{proof}Recall that $\mathcal{J}_p:J(D,\mu)\longrightarrow J(D,\mu)$ is given by $\mathcal{J}_p(x,y,z)=(x,yp,p^{-1})$. Since $p$ is a commutator in $D^*$, we can write $p=jij^{-1}i^{-1}$ for $i,j\in D^*$. Hence $jij^{-1}=pi$, i.e. $ji=pij$ and 
$ji^{-1}j^{-1}=i^{-1}p^{-1}$. Using the formulae on $U$ operators we derived, we have, in $J(D,\mu)$, 
\begin{eqnarray*}
U_{({(ij)}^{-1},0,0)}U_{(i,0,0)}U_{(j,0,0)}U_{(0,1,0)}(x,y,z) & = & U_{({(ij)}^{-1},0,0)}U_{(i,0,0)}U_{(j,0,0)}(\mu y,z,\mu x)\\
& = & U_{({(ij)}^{-1},0,0)}U_{(i,0,0)}(j\mu yj, j^{\#}z,\mu x j^{\#})\\
& = & U_{({(ij)}^{-1},0,0)}(ij\mu yji,i^{\#}j^{\#}z,\mu xj^{\#}i^{\#})
\end{eqnarray*}
\noindent
Now,\\
$U_{({(ij)}^{-1},0,0)}(ij\mu yji,i^{\#}j^{\#}z,\mu xj^{\#}i^{\#})$
\begin{eqnarray*}
& = &({(ij)}^{-1}ij\mu yji(ij)^{-1}, (j^{-1}i^{-1})^{\#}i^{\#}j^{\#}z,\mu xj^{\#}i^{\#}(j^{-1}i^{-1})^{\#})\\
& = & (\mu yjij^{-1}i^{-1},{i^{\#}}^{-1}{j^{\#}}^{-1}i^{\#}j^{\#}z,\mu x)\\
& = & (\mu yp, (jij^{-1}i^{-1})^{\#}z,\mu x)\\
& = & (\mu yp, p^{-1} z,\mu x).
\end{eqnarray*}
We have finally, 
$$U_{(0,0,1)}(\mu yp,p^{-1}z,\mu x)=(\mu^{-1}\mu x, \mu^{-1}\mu yp,p^{-1}z)=(x,yp,p^{-1}z) = \mathcal{J}_p(x,y,z).$$
It follows that $\mathcal{J}_p$ is inner.
\end{proof}
\noindent
{\bf Reduced Whitehead Groups :}Let $D$ be a central simple algebra over a field $k$. Let $SL(1,D)=\{x\in D|Nrd(x)=1\}$ and $D^*=\{x\in D|Nrd(x)\neq 0\}$. Then 
in $D^*$, the commutator subgroup $(D^*,D^*)\subset SL(1,D)$. The {\it reduced Whitehead group} of $D$ is defined by 
$$SK(1,D)=SL(1,D)/(D^*,D^*).$$
We have the following result of Wang (\cite{W}) (see also \cite{T4}, Proposition 2.7):
\begin{theorem}{\label{wang}}
Let $D$ be a central simple algebra over $k$ of square-free index. Then $SK(1,D)=0$. In particular, when degree of $D$ is a prime, $SK(1,D)=0$.
\end{theorem}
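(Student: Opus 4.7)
The plan is to prove Wang's theorem by a two-stage reduction. The first stage reduces from square-free index to prime index. If $D$ has square-free index $n = p_1 \cdots p_r$ with distinct primes $p_i$, then by the primary decomposition in the Brauer group $D$ is Brauer-equivalent to a tensor product $D_1 \otimes_k \cdots \otimes_k D_r$ where each $D_i$ is a central simple $k$-algebra of prime index $p_i$. Because $SK_1$ is a Morita invariant and the reduced norm on a tensor product of algebras of pairwise coprime degrees factors as a product of the individual reduced norms, one obtains a natural isomorphism $SK_1(D) \cong \prod_i SK_1(D_i)$. It therefore suffices to establish $SK_1(D) = 0$ when $D$ is a division algebra of prime degree $p$.

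For the prime-degree case, fix $a \in SL(1,D)$ and consider the subfield $L = k(a) \subset D$. Since $[L:k]$ divides $\deg(D) = p$, either $L = k$ or $L$ is a maximal subfield. If $L = k$, then $a^{p} = \mathrm{Nrd}(a) = 1$, so $a$ is a $p$-th root of unity in $k$; one handles such roots of unity by realising $D$ as a symbol algebra when $\zeta_p \in k$ and showing directly that the standard generators produce $\zeta_p$ as a commutator (otherwise $a=1$). The substantive case is when $L$ is a maximal subfield of degree $p$, in which case $\mathrm{Nrd}(a) = N_{L/k}(a) = 1$.

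To show such a norm-one element is a commutator, I would first treat the case when $L/k$ is cyclic with generator $\sigma$. Hilbert 90 gives $b \in L^{\ast}$ with $a = b\,\sigma(b)^{-1}$. By the Skolem--Noether theorem applied to the two embeddings $L \hookrightarrow D$ differing by $\sigma$, there exists $c \in D^{\ast}$ with $\sigma(\ell) = c\ell c^{-1}$ for every $\ell \in L$. Then
\[
a \;=\; b\,(c b c^{-1})^{-1} \;=\; b\, c\, b^{-1}\, c^{-1} \;=\; [b,c],
\]
which lies in $[D^{\ast},D^{\ast}]$. For $L/k$ separable but non-cyclic (which already occurs for $p \geq 3$), one reduces to the cyclic situation via a specialization argument from the generic cyclic division algebra of degree $p$, using that the formation of $SK_1$ commutes with passage to suitable function-field extensions, together with the fact that over a generic point the maximal subfields can be arranged to be cyclic. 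Any residual characteristic-$p$ inseparable subcase is handled by a $p$-th power descent, since in that regime $a = a_0^{p}$ for some $a_0$ and commutators pass through $p$-th powers.

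The main obstacle I expect is the non-cyclic subcase of Case 2: producing an explicit commutator expression for $a$ when $L/k$ is a non-cyclic separable extension of prime degree. The cyclic argument above, combining Hilbert 90 with Skolem--Noether, does not extend verbatim, and one must either exploit the fact that $D$ always admits \emph{some} cyclic maximal subfield under mild hypotheses, or invoke a generic-algebra/specialization argument. This is precisely the point at which Wang's original proof becomes delicate, and it is also the reason the theorem is restricted to square-free (hence in each primary piece, prime) index: for higher index one loses the dichotomy $[k(a):k] \in \{1,p\}$ that confines $k(a)$ to being either central or maximal.
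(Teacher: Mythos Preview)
The paper does not prove this statement: it is quoted as a known result of Wang, with references to \cite{W} and \cite{T4}, and is used as a black box. There is therefore no ``paper's own proof'' to compare against; your proposal goes well beyond what the paper does, which is simply to cite the theorem.

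As for the content of your sketch: the reduction from square-free to prime index is standard and fine, and your treatment of the cyclic case (Hilbert~90 combined with Skolem--Noether) is correct --- indeed the paper itself reproduces exactly this argument later, in the proof of Proposition~5.2, to show that if $p\in SL(1,D)$ is cyclic then $p^3$ is a single commutator. However, your handling of the non-cyclic separable case is not a proof but a gesture toward one. The suggestion to ``reduce to the cyclic situation via a specialization argument from the generic cyclic division algebra'' is not how Wang's argument proceeds and would require substantial work to make rigorous: specialization of $SK_1$ does not behave as simply as you suggest, and the claim that ``over a generic point the maximal subfields can be arranged to be cyclic'' is not obviously compatible with the given element $a$ whose subfield $k(a)$ is already fixed and non-cyclic. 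Wang's actual argument in the prime-degree case uses the Wedderburn factorization of the minimal polynomial of $a$ in $D[X]$ (a tool the paper invokes separately in Proposition~5.1 and Proposition~5.2) rather than a reduction to the cyclic case. So your proposal has a genuine gap at precisely the place you yourself flag as delicate; for the purposes of this paper, though, no proof is needed, only the citation.
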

\begin{corollary}{\label{sl1dinner}}Let $D$ be a central division algebra of degree $3$ over $k$ and let $p\in SL(1,D)$. Let $J(D,\mu)$ be a first Tits construction Albert algebra over $k$. Then 
$\mathcal{J}_p:J(D,\mu)\longrightarrow J(D,\mu)$ is inner. 
\end{corollary}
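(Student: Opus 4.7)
The proof should be essentially a one-line deduction from the two preceding results, namely Theorem \ref{jpinner} and Theorem \ref{wang}. The plan is to use Wang's theorem to write $p$ as a product of commutators, then invoke the multiplicativity of the assignment $p \mapsto \mathcal{J}_p$ to reduce to the commutator case already handled.

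More concretely, since $D$ has degree $3$ (a prime), Theorem \ref{wang} gives $SK(1,D) = 0$, i.e.\ every norm $1$ element of $D$ lies in the commutator subgroup $[D^*, D^*]$. Hence I can write
\[
p = c_1 c_2 \cdots c_n, \qquad c_i \in [D^*, D^*].
\]
Next, I recall the identity $\mathcal{J}_{ab} = \mathcal{J}_b \mathcal{J}_a$ noted earlier in Section 4 (which follows directly from the explicit formula $\mathcal{J}_p(x,y,z) = (x, yp, p^{-1}z)$). Iterating this gives
\[
\mathcal{J}_p = \mathcal{J}_{c_n} \mathcal{J}_{c_{n-1}} \cdots \mathcal{J}_{c_1}.
\]
By Theorem \ref{jpinner}, each factor $\mathcal{J}_{c_i}$ lies in $Instr(J(D,\mu))$, and since $Instr(J(D,\mu))$ is a subgroup of $Str(J(D,\mu))$, the product $\mathcal{J}_p$ is inner as well.

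There is no real obstacle here: the hard technical content has already been absorbed into Theorem \ref{jpinner} (the explicit realization as a product of four $U$-operators) and Theorem \ref{wang} (Wang's theorem on $SK_1$). The only thing to be careful about is getting the order right in $\mathcal{J}_{ab} = \mathcal{J}_b \mathcal{J}_a$ so that the reduction to commutators is clean, but this is purely formal.
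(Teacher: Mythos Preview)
Your proposal is correct and follows essentially the same route as the paper: use Wang's theorem to write $p$ as a product of commutators, invoke the identity $\mathcal{J}_{p_1\cdots p_r}=\mathcal{J}_{p_r}\cdots\mathcal{J}_{p_1}$, and apply Theorem~\ref{jpinner} to each factor. The paper's proof is indeed just this one-line deduction.
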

\begin{proof} Observe that for $p_1,\cdots,p_r\in SL(1,D),~
\mathcal{J}_{p_1\cdots p_r}=\mathcal{J}_{p_r}\cdots \mathcal{J}_{p_1}$,  and that 
$(D^*,D^*)=SL(1,D)$. Hence the assertion follows from Theorem (\ref{jpinner}).
\end{proof}
\noindent
{\bf Reduced Unitary Whitehead Groups :}Let $(B,\tau)$ be a central simple algebra with a unitary involution $\tau$ over a field $k$. Let $\Sigma_{\tau}(B)$ be the subgroup of $B^*$ generated by $Sym(B,\tau)^*$, where 
$$Sym(B,\tau)=\{x\in B^*|\tau(x)=x\}.$$ 
Let $\Sigma_{\tau}'(B)$ be defined by 
$$\Sigma_{\tau}'(B)=\{x\in B^*|N_B(x)\in k^*\}.$$
Clearly $\Sigma_{\tau}'(B)$ is a subgroup of $B^*$ and $\Sigma_{\tau}(B)\subset\Sigma_{\tau}'(B)$. The {\it reduced unitary Whitehead group} of $B$ is defined by 
$$USK_1(B)=\Sigma_{\tau}'(B)/\Sigma_{\tau}(B).$$
We have the following analogue of Wang's result, due to Yanchevskii (see \cite{KMRT}, Prop. 17.27):
\begin{proposition}{\label{unitarywang}}If $B$ is a central division algebra of prime degree with a unitary involution, then $USK_1(B)=0$.
\end{proposition}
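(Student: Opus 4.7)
The plan is to parallel the proof of Wang's theorem (Theorem \ref{wang}), replacing the reduced Whitehead group by its unitary analogue. Let $K$ denote the centre of $B$; by hypothesis $K/k$ is a separable quadratic extension and $\tau$ restricts to the nontrivial $k$-automorphism $\alpha \mapsto \bar\alpha$ of $K$. Write $p = \deg_K(B)$, which is prime. The reduced norm $N_B : B^* \to K^*$ satisfies $N_B(\tau(y)) = \overline{N_B(y)}$ for every $y \in B^*$. Consequently, for any $x \in \Sigma_\tau'(B)$ — equivalently, $N_B(x) \in k^*$ — we have
\[
N_B\bigl(x\,\tau(x)^{-1}\bigr) \;=\; N_B(x)\cdot\overline{N_B(x)}^{-1} \;=\; 1,
\]
so that $x\,\tau(x)^{-1} \in SL(1,B)$. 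By Wang's theorem (Theorem \ref{wang}), $SL(1,B) = (B^*,B^*)$ since the degree of $B$ is prime.

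The heart of the argument is then a \emph{unitary Hilbert 90} for $B$: every element of the form $x\,\tau(x)^{-1}$ with $N_B(x) \in k^*$ should lie in $\Sigma_\tau(B)$. The prime-degree hypothesis enters decisively here, since any non-central $a \in B$ generates a maximal subfield $K(a) \subset B$, and inside such a subfield $\tau$ either acts trivially or restricts to the Galois automorphism of a quadratic subextension; in both cases the classical Hilbert 90 theorem yields explicit factorizations into symmetric units of $B$. Combined with the tautological observation that $x\,\tau(x)$ is itself symmetric and hence belongs to $\Sigma_\tau(B)$, one recovers $x$ in $\Sigma_\tau(B)$ up to a central correction; the latter is absorbed using $k^* \subset \mathrm{Sym}(B,\tau)$ together with the identity $N_B(\lambda) = \lambda^p$ for $\lambda \in k^*$, which, by primality of $p$, is enough to account for all cosets of $N_B$ that can arise.

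The main obstacle is the unitary Hilbert 90 step itself. Wang's theorem exhibits $x\,\tau(x)^{-1}$ only as an \emph{abstract} product of commutators in $B^*$, and turning each individual commutator into an explicit product of symmetric units requires delicate bookkeeping inside maximal subfields of $B$ and their interaction with $\tau$. Primality of $p$ is essential at this step: it forces every non-central element of $B$ to generate a maximal subfield, giving enough room to perform the necessary conjugations and replace commutators by $\tau$-symmetric factors. For non-prime degree $USK_1(B)$ is known to be genuinely nontrivial in general, so any proof must use primality in a substantive way — and this is precisely where Yanchevskii's original argument (reproduced in \cite{KMRT}, Section 17.A) invests the bulk of its technical effort.
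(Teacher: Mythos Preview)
The paper does not prove this proposition at all: it is stated with the citation ``(see \cite{KMRT}, Prop.~17.27)'' and attributed to Yanchevskii, then used as a black box in the subsequent corollary. There is therefore no ``paper's own proof'' against which to compare your attempt.

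Your proposal, for its part, is not a proof either --- and you say so yourself. You correctly identify the overall architecture (reduce to a norm-one element via $x\,\tau(x)^{-1}$, invoke Wang, then push things into $\Sigma_\tau(B)$), but the step you label ``unitary Hilbert~90'' is precisely the content of the proposition, and you explicitly defer it to Yanchevskii's argument in \cite{KMRT}. Two further cautions about the sketch as written: first, the sentence ``combined with the tautological observation that $x\,\tau(x)$ is itself symmetric \ldots\ one recovers $x$ in $\Sigma_\tau(B)$ up to a central correction'' does not go through as stated, since in a noncommutative algebra knowing that $x\,\tau(x)^{-1}$ and $x\,\tau(x)$ both lie in $\Sigma_\tau(B)$ does not by itself give $x \in \Sigma_\tau(B)$; second, the appeal to Wang's theorem for $x\,\tau(x)^{-1}$ produces an expression as a product of commutators in $B^*$, but this is not how Yanchevskii's actual proof proceeds --- the argument in \cite{KMRT}, 17.27 works directly with maximal subfields stable under $\tau$ and does not route through $SK_1$. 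So while your high-level intuition about where primality enters is sound, the sketch would need substantial reworking to become a proof, and in any case the paper makes no attempt to supply one.
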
 
We now derive an analogue of Corollary(\ref{sl1dinner}) for second Tits constructions.
\begin{corollary}Let $(B,\sigma)$ be a central division algebra of degree $3$ over a quadratic extension $K/k$ with a unitary involution and let $p\in SU(B,\sigma)$. Let $J(B,\sigma,u,\mu)$ be a second Tits construction Albert algebra over $k$. Then $\phi_p:J(B,\sigma,u,\mu)\longrightarrow J(B,\sigma,u,\mu)$ is inner. 
\end{corollary}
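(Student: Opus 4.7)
The strategy is exactly parallel to Corollary \ref{sl1dinner}, with Proposition \ref{unitarywang} (Yanchevskii) playing the role Wang's theorem played in the first-construction case. Since $p\in SU(B,\sigma)$ has $N_B(p)=1\in k^{*}$, we have $p\in \Sigma'_{\sigma}(B)$, so the vanishing of $USK_1(B)$ lets us write
\[
p = s_1 s_2 \cdots s_r, \qquad \sigma(s_i)=s_i,\ s_i \in B^{*}.
\]
Each $(s_i,0)\in A$ is invertible, since the second-construction norm formula gives $N_A((s_i,0))=N_B(s_i)\neq 0$, so each $U_{(s_i,0)}$ lies in $Instr(A)$.

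The computational core is the identity
\[
U_{(s,0)}(c,d) = (scs,\ s^{\#}d),
\]
valid for every invertible $s\in B$; this is immediate from the formulas for $x^{\#}$, $T$ and $U_x$ on second constructions derived in this section, together with the standard Jordan identity $U_s(c)=scs$ in $B_{+}$. From it I would prove, by induction on $r$, the composition formula
\[
U_{(s_1,0)}U_{(s_2,0)}\cdots U_{(s_r,0)}(a,b) = \bigl(P\, a\, \sigma(P),\ \sigma(P)^{\#}\, b\bigr), \qquad P := s_1 s_2 \cdots s_r,
\]
the first coordinate following by iterated conjugation together with $s_r s_{r-1}\cdots s_1=\sigma(P)$ (which uses $\sigma(s_i)=s_i$), and the second from $s_1^{\#}s_2^{\#}\cdots s_r^{\#}=(s_r s_{r-1}\cdots s_1)^{\#}=\sigma(P)^{\#}$ via iterated application of $(xy)^{\#}=y^{\#}x^{\#}$.

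To finish, specialize to $P=p\in SU(B,\sigma)$. The unitarity relation $p\sigma(p)=1$ gives $\sigma(p)=p^{-1}$, and combined with $N_B(p)=1$ this yields $\sigma(p)^{\#}=(p^{-1})^{\#}=N_B(p)^{-1}p=p$. The right-hand side of the composition formula therefore becomes $(pap^{-1},pb)=(pa\sigma(p),pb)=\phi_p(a,b)$, so
\[
\phi_p = U_{(s_1,0)}U_{(s_2,0)}\cdots U_{(s_r,0)} \in Instr(A).
\]
Since $\phi_p$ is an automorphism of $A$ by construction, it lies in $Aut(A)\cap Instr(A)$ and is therefore inner. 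The only delicate point is the inductive composition formula: careful tracking of the order reversal under $\sigma$ and of the reversal rule for the sharp is what collapses the expression to one depending only on $P$ (not on the chosen factorization of $p$ into symmetric factors), and the unitarity of $p$ is precisely what converts the second-coordinate factor $\sigma(P)^{\#}$ into $p$, matching the definition of $\phi_p$.
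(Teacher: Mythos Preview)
Your proof is correct and follows essentially the same approach as the paper: factor $p$ as a product of $\sigma$-symmetric elements via Yanchevskii's result, use the formula $U_{(s,0)}(c,d)=(scs,s^{\#}d)$, and verify that the composite $U_{(s_1,0)}\cdots U_{(s_r,0)}$ equals $\phi_p$ by tracking the order reversal under $\sigma$ and the identity $(xy)^{\#}=y^{\#}x^{\#}$. The paper carries out the same computation as a single chain of equalities rather than an explicit induction, but the content is identical.
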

\begin{proof}Recall that $\phi_p:J(B,\sigma,u,\mu)\longrightarrow J(B,\sigma,u,\mu)$ is given by $\phi_p(a,b)=(pa\sigma(p),pb)$. Since $p\in SU(B,\sigma)$, we have $p\sigma(p)=1$ and $N_B(p)=pp^{\#}=\sigma(p)^{\#}\sigma(p)=1$. By Proposition \ref{unitarywang} we can write $p=s_1s_2\cdots s_n$ 
for some $s_1,\cdots, s_n\in Sym(B,\sigma)$. Hence $\sigma(p)=s_ns_{n-1}\cdots s_1$ and we have,
\begin{eqnarray*}
\phi_p(a,b)=(pa\sigma(p),pb)& = & (s_1\cdots s_na\sigma(s_1.\cdots s_n),\sigma(p)^{-1}b)\\
                            & = & (s_1\cdots s_nas_n\cdots s_1,\sigma(p)^{\#}b)\\
                            & = & (s_1\cdots s_nas_n\cdots s_1,(s_n\cdots s_1)^{\#}b)\\
                            & = & (s_1s_2\cdots s_nas_ns_{n-1}\cdots s_1, s_1^{\#}s_2^{\#}\cdots s_n^{\#}b)\\
                            & = & U_{(s_1,0)}\cdots U_{(s_n,0)}(a,b),
\end{eqnarray*}
where the last line follows from the computations with $U$-operators in the case of second Tits constructions done above. 
Hence $\phi_p$ is inner. 
\end{proof}
\begin{theorem}{\label{fixedinner}}
Let $A$ be an Albert algebra arising from the first Tits construction. Let $\phi\in Aut(A)$ be an automorphism of $A$ that fixes pointwise a subalgebra of the form 
$D_+$ for a degree $3$ central simple algebra $D$ over $k$. Then $\phi$ is inner. 
\end{theorem}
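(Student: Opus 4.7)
The plan is to realize $A$ as a first Tits construction $J(D,\mu)$ with the given $D_+$ sitting in standard position as $D_0$, identify $\phi$ explicitly in the normal form provided by Lemma \ref{extendaut}, and then quote Corollary \ref{sl1dinner}, which itself packages Wang's theorem \ref{wang} together with the commutator-to-$U$-operator computation of Theorem \ref{jpinner}.

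First I would extend the inclusion $D_+ \hookrightarrow A$ to an isomorphism $\theta : A \longrightarrow J(D,\mu)$ for a suitable $\mu \in k^*$ with $\theta(D_+) = D_0$, exactly as in the proof of Theorem \ref{cyclic}. Replacing $\phi$ by $\theta \phi \theta^{-1}$, I may assume $A = J(D,\mu)$ and that $\phi$ fixes $D_0$ pointwise; inner-ness transports through the Jordan isomorphism $\theta$ because $U$-operators do ($\theta\, U_x\, \theta^{-1} = U_{\theta(x)}$), so no generality is lost.

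Next, Lemma \ref{extendaut} produces $a,b \in D^*$ with $N_D(a) = N_D(b)$ such that $\phi = \psi_{a,b}$. The requirement that $\phi$ act as the identity on $D_0$ amounts to $a x a^{-1} = x$ for every $x \in D$, which puts $a$ in the centre of $D$; since $D$ is central of degree $3$, this forces $a \in k^*$. Setting $q = ab^{-1}$ and using that $a$ is scalar, a direct substitution collapses $\psi_{a,b}$ to $(x,y,z) \mapsto (x, yq, q^{-1}z)$, which is precisely $\mathcal{J}_q$; the normalisation $N_D(a) = N_D(b)$ becomes $N_D(q) = 1$, so $q \in SL(1,D)$.

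Finally, Corollary \ref{sl1dinner} applies verbatim: since $D$ has prime degree $3$, Wang's theorem gives $SK(1,D) = 0$, so $q$ is a product of commutators in $D^*$, and combining Theorem \ref{jpinner} with the multiplicativity $\mathcal{J}_{p_1 \cdots p_r} = \mathcal{J}_{p_r} \cdots \mathcal{J}_{p_1}$ places $\mathcal{J}_q$ in $Instr(J(D,\mu))$. Transporting back through $\theta$ yields $\phi \in Instr(A)$, so $\phi$ is inner. The only nonformal ingredient is the explicit presentation of $\mathcal{J}_p$ for a single commutator $p$ as a product of four $U$-operators, which Theorem \ref{jpinner} has already supplied; granted that, the present theorem is essentially a bookkeeping consequence of the earlier preparatory results, and I do not anticipate any hidden obstacle.
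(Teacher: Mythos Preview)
Your proposal is correct and follows essentially the same route as the paper: reduce via an isomorphism $\theta:A\to J(D,\mu)$ sending $D_+$ to $D_0$, identify the transported automorphism as $\mathcal{J}_p$ for some $p\in SL(1,D)$, invoke Wang's theorem through Corollary~\ref{sl1dinner}, and transport back using $\theta^{-1}U_a\theta=U_{\theta^{-1}(a)}$. The only cosmetic difference is that you reach the form $\mathcal{J}_q$ by first writing the automorphism as $\psi_{a,b}$ and then arguing $a\in k^*$, whereas the paper quotes directly that any automorphism fixing $D_0$ pointwise has the form $\mathcal{J}_p$.
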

\begin{proof}The inclusion $D_+\hookrightarrow A$ induces an isomorphism 
$\theta:A\longrightarrow J(D,\mu)$ for some $\mu\in k^*$ such that $\theta(D_+)=D_0$. Let $\psi=\theta\phi\theta^{-1}:J(D,\mu)\longrightarrow J(D,\mu)$. Then $\psi$ fixes $D_0$ pointwise. Hence $\psi$ is given by 
$\psi(x,y,z)=(x,yp,p^{-1}z)$ for some $p\in SL(1,D)$. Now, by the above theorem of Wang, $SK(1,D)=0$ if degree of $D$ is a prime. Recall that the reduced Whitehead group $SK(1,D)=SL(1,D)/(D^*,D^*)$. Hence $p\in SL(1,D)$ is a product of commutators in $D^*$. By the above corollary, $\psi$ is inner. Hence there are elements $a_1,\cdots,a_r\in J(D,\mu)$ such that 
$\psi=U_{a_1}\cdots U_{a_r}$. But then 
$\phi=\theta^{-1} U_{a_1}\cdots U_{a_r}\theta$. Now, for $a\in J(D,\mu),~U_a=
2R_a^2-R_{a^2}$. Hence, for any $x\in A$,
\begin{eqnarray*}
\theta^{-1} U_a\theta(x) & = & \theta^{-1}(2R_a^2-R_{a^2})(\theta(x)) =2\theta^{-1}(a(a\theta(x)))-\theta^{-1}(a^2\theta(x))\\
& = & [2R_{\theta^{-1}(a)}^2-R_{\theta^{-1}(a)^2}](x)=U_{\theta^{-1}(a)}(x).
\end{eqnarray*}
Hence we have, $\phi=U_{\theta^{-1}(a_1)}\cdots U_{\theta^{-1}(a_r)}$.
\end{proof} 
\begin{corollary}{\label{pureinner}} Let $A$ be a pure first construction Albert division algebra over $k$. Then every automorphism of $A$ is inner. In particular, if $A=J(D,\mu)$ be a pure first Tits construction Albert division algebra over $k$ and $a\in D^*$. Then $\mathcal{I}_a:A\longrightarrow A$ defined by 
$\mathcal{I}_a(x,y,z)=(axa^{-1},aya^{-1},aza^{-1})$ is an inner automorphism of $A$.
\end{corollary}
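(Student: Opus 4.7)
The plan is to obtain this corollary by chaining the three main theorems already established in this section. Given any $\phi \in Aut(A)$ with $A$ a pure first Tits construction Albert division algebra, Theorem \ref{puretypea2} writes $\phi$ as a product of automorphisms of type $A_2$, each stabilizing some nine-dimensional subalgebra of $A$.

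Each such $A_2$-type factor is then handled by Theorem \ref{purefixeda2}, which, again using the pure first construction hypothesis, expresses an automorphism stabilizing a nine-dimensional subalgebra as a product of automorphisms that fix a nine-dimensional subalgebra pointwise. The purity assumption is essential here: as noted in Section \ref{albert}, every nine-dimensional subalgebra of a pure first construction Albert division algebra is of the form $D'_+$ for some degree $3$ central division algebra $D'$ over $k$. This is exactly the hypothesis needed to apply Theorem \ref{fixedinner}, which then declares each such pointwise-fixing factor to lie in $Instr(A)$.

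Since $Instr(A)$ is a subgroup of $Str(A)$, the product of these inner factors is again inner, completing the proof of the first assertion. For the second assertion, I simply observe that $\mathcal{I}_a:(x,y,z)\mapsto (axa^{-1},aya^{-1},aza^{-1})$ is manifestly a $k$-automorphism of $A=J(D,\mu)$ (it respects the first Tits construction product), and therefore falls under the first assertion.

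At this stage no serious obstacle remains; the heavy lifting has been done in the preceding results. The decisive inputs are Theorem \ref{puretypea2} (reducing to $A_2$-type via the fixed-point corollary and the Skolem--Noether theorem applied to cyclic cubic subfields), Theorem \ref{purefixeda2} (resting on the explicit decomposition $\psi = \mathcal{J}_{ab^{-1}}\mathcal{I}_a$ inside $J(D,\mu)$), and Theorem \ref{fixedinner} (invoking Wang's theorem $SK(1,D)=0$ for $D$ of prime degree together with Theorem \ref{jpinner} to realize each $\mathcal{J}_p$ as a product of $U$-operators). Together these ingredients carry the argument with no further calculation required.
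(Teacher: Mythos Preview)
Your proposal is correct and follows essentially the same approach as the paper's own proof, which simply cites Theorems~\ref{puretypea2}, \ref{purefixeda2}, and \ref{fixedinner} in sequence. Your additional remarks (that purity forces every nine-dimensional subalgebra to be of the form $D'_+$, and that the second assertion about $\mathcal{I}_a$ is just a special case of the first) make explicit what the paper leaves implicit.
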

\begin{proof} This follows from Theorem(\ref{puretypea2}), Theorem(\ref{purefixeda2}), and Theorem(\ref{fixedinner}).
\end{proof} 
\noindent
We will now prove that cubes of norm $1$ elements of a degree $3$ central division algebra are product of two commutators. To prove this we need a version of Wedderburn Factorization Theorem for degree $3$ division algebras (see \cite{J3}, Lemma 2.9.8).
This gives an easy proof of the fact that for $A=J(D,\mu)$ and $p\in SL(1,D)$ the automorphism $\mathcal{J}_p^3$ is inner. This of course is a consequence of 
Corollary (\ref{sl1dinner}) proved above, but the proof we present below avoids the use of triviality of $SK(1,D)$. A little terminology first:
\begin{definition}We call an element $a$ of a division algebra $D$ over $k$ {\bf cyclic} if $k(a)$ is a cyclic subfield of $D$. 
\end{definition}
\begin{proposition}{\bf (Wedderburn Factorization Theorem):}Let $D$ be a central division algebra of degree $3$ over $k$ and let $a\in D^*$ be a non-cyclic element. Then the minimum polynomial $f(X)=X^3-\alpha_1 X^2+\alpha_2 X -\alpha_3$ of $a$ over $k$ has a factorization 
$$f(X)=(X-a_2)(X-a_1)(X-a_0)$$
in $D[X]$ with $a_0=a$,
$$c=[a_0a_1]=[a_1a_2]=[a_2a_0]\neq 0,~~ca_ic^{-1}=a_{i+1},$$
where the indices are reduced modulo $3$. Further $c^3=\gamma\in k^*$. 
Here $[xy]=xy-yx,~x,y\in D$.
\end{proposition}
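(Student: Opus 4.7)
The plan is to follow Wedderburn's classical factorization argument, adapted to degree $3$. \textbf{Step 1.} Since $f(X) \in k[X]$ is central in $D[X]$ and $a_0$ is a root, right Euclidean division yields $f(X) = q(X)(X - a_0)$ with $q(X) \in D[X]$ monic of degree $2$ (the remainder vanishes because $f(a_0) = 0$). Matching coefficients produces $q(X) = X^2 + (a_0 - \alpha_1) X + \alpha_3 a_0^{-1}$, whose coefficients all lie in the cubic subfield $k(a_0) \subseteq D$.

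\textbf{Step 2.} The main step is producing a second root $a_1 \in D$ of $q(X)$, together with a conjugating element $c \in D^*$ for which $a_1 = c a_0 c^{-1}$. Non-cyclicity of $a_0$ forces $q(X)$ to be irreducible over $k(a_0)$, so any root $a_1 \in D$ must lie outside $k(a_0)$ and therefore not commute with $a_0$. The existence of such a pair $(a_1, c)$ is the content of the classical Wedderburn factorization theorem: the idea (following \cite{J3}, Lemma 2.9.8) is to exploit that $D$ contains many conjugates $u a_0 u^{-1}$ of $a_0$, all roots of $f$, and to pin down a specific $c \in D^*$ such that $a_1 := c a_0 c^{-1}$ simultaneously satisfies $q(a_1) = 0$. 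A descent computation inside $D \otimes_k k(a_0) \cong M_3(k(a_0))$, together with Skolem--Noether applied to the two embeddings $k(a_0) \hookrightarrow D$ provided by $a_0$ and $a_1$, yields the required $c$. Once $a_1$ is produced, a further right division $q(X) = (X - a_2)(X - a_1)$ determines $a_2 \in D$, giving the factorization $f(X) = (X - a_2)(X - a_1)(X - a_0)$.

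\textbf{Step 3.} The commutator identities follow from the factorization. Expanding $(X - a_2)(X - a_1)(X - a_0)$ using centrality of $X$ and matching with $X^3 - \alpha_1 X^2 + \alpha_2 X - \alpha_3$ produces the three Vi\`ete-like identities
\[
a_0 + a_1 + a_2 = \alpha_1, \qquad a_1 a_0 + a_2 a_0 + a_2 a_1 = \alpha_2, \qquad a_2 a_1 a_0 = \alpha_3.
\]
Substituting $a_2 = \alpha_1 - a_0 - a_1$ into $[a_1, a_2]$ and $[a_2, a_0]$ and expanding, the $\alpha_1$-terms and squared terms cancel, yielding $[a_1, a_2] = [a_2, a_0] = a_0 a_1 - a_1 a_0 =: c$. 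Non-vanishing of $c$ is automatic: $c = 0$ would force $a_1 \in C_D(a_0) = k(a_0)$, contradicting irreducibility of $q$. The conjugation relation $c a_i c^{-1} = a_{i+1}$ is built into the construction of $c$ in Step 2. Finally, $c^3$ centralizes each $a_i$ and hence the subalgebra $\langle a_0, a_1 \rangle$; this subalgebra must equal $D$ (any $k$-subalgebra of $D$ strictly containing the maximal subfield $k(a_0)$ equals $D$, since a $6$-dimensional division subalgebra would have to be a field of degree $6$, impossible in a degree-$3$ central division algebra), so $c^3 \in Z(D) = k^*$.

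The main obstacle is Step 2, the simultaneous construction of the second root $a_1$ and the conjugating element $c$. This is the crux of Wedderburn's factorization theorem, and the cleanest route is to cite the proof from \cite{J3} (Lemma 2.9.8) rather than reproduce it here.
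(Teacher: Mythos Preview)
The paper does not actually prove this proposition: it is quoted as a known result from Jacobson's book (\cite{J3}, Lemma 2.9.8), and the paper only \emph{uses} it. So there is no ``paper's own proof'' to compare against; you are effectively trying to reconstruct Jacobson's argument.

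Your outline has the right shape, but Step~3 contains a genuine gap. You define $c := [a_0,a_1]$ and correctly verify, using $a_0+a_1+a_2=\alpha_1\in k$, that $[a_0,a_1]=[a_1,a_2]=[a_2,a_0]$. You then assert that the conjugation relations $c a_i c^{-1}=a_{i+1}$ are ``built into the construction of $c$ in Step~2.'' They are not. In Step~2 you only invoked Skolem--Noether to get \emph{some} element $c'$ with $c'a_0c'^{-1}=a_1$; you never showed $c'=[a_0,a_1]$, and in any case Step~2 says nothing about $a_1\mapsto a_2$ or $a_2\mapsto a_0$. Concretely, if you try to check $ca_0=a_1c$ directly from the Vi\`ete relations you wrote down, you find
\[
ca_0-a_1c=(a_0+a_1)a_1a_0-a_1a_0(a_0+a_1)=-[a_2,a_1a_0]=a_1a_0a_2-\alpha_3,
\]
so what is really needed is $a_1a_0a_2=\alpha_3$, i.e.\ that $f(X)=(X-a_1)(X-a_0)(X-a_2)$ is \emph{also} a factorization. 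This cyclicity of the factorization is exactly the nontrivial content of Wedderburn's lemma and does not follow from the single ordered factorization you produced.

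Since you end up citing \cite{J3}, Lemma 2.9.8 anyway for the crux of Step~2, the honest thing to do is what the paper does: state the proposition and defer entirely to that reference, rather than sketch an argument that still leaves the central identity unproved.
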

\noindent
We have, 
\begin{proposition}Let $D$ be a degree $3$ central division algebra over $k$ and 
$p\in SL(1,D)$. Then $p^3$ is a product of two commutators in $D^*$.
\end{proposition}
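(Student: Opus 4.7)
My plan is to prove the stronger statement that every element of $SL(1,D)$ is a single commutator in $D^*$, from which the proposition follows immediately: since $p, p^2 \in SL(1,D)$, writing $p = [a_1,b_1]$ and $p^2 = [a_2,b_2]$ gives
\[
p^3 \;=\; p \cdot p^2 \;=\; [a_1,b_1]\,[a_2,b_2],
\]
a product of two commutators in $D^*$.

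To realize an arbitrary $q \in SL(1,D)$ as a single commutator, I split into three cases according to the subfield $k(q) \subseteq D$. If $q \in k^*$, then $Nrd(q) = q^3 = 1$ forces $q$ to be a cube root of unity; if $q \neq 1$ this requires $\omega \in k$, and Wedderburn's structure theorem for degree $3$ central simple algebras combined with Kummer theory shows $D$ admits a maximal subfield $L = k(\ell)$ with $\ell^3 \in k^*$ and $z\ell z^{-1} = \omega\ell$ for the cyclic algebra generator $z$, giving $q = \omega = [z,\ell]$. If $L := k(q)$ is a cyclic cubic extension, then writing $D = (L/k,\sigma,\beta)$ and applying Hilbert 90 to the norm condition $N_{L/k}(q) = Nrd(q) = 1$ produces $y \in L^*$ with $q = y\sigma(y)^{-1}$, whence $q = y \cdot zy^{-1}z^{-1} = [y,z]$.

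The essential case is $L = k(q)$ non-cyclic cubic. Here I apply the Wedderburn Factorization Theorem to $q$ and obtain $c \in D^*$ with $cqc^{-1} = q_1 \neq q$, $c^3 = \gamma \in k^*$, and (writing $q_i := c^i q c^{-i}$) the relation $q_2 q_1 q = 1$. Expanding this as $c^2 q c^{-1} q c^{-1} q = 1$, then inverting and multiplying on the right by $c$, yields the key identity $(q^{-1}c)^3 = c^3$. Now $c \notin k$ (since $q_1 \neq q$) and $D$ has no quadratic subfield (its degree is prime), so $X^3 - \gamma$ is irreducible over $k$ and is therefore the minimum polynomial of $c$; likewise $q^{-1}c \notin k$ (otherwise $c$ would commute with $q$, forcing $q_1 = q$), so $q^{-1}c$ has the same minimum polynomial. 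By Skolem-Noether there exists $w \in D^*$ with $wcw^{-1} = q^{-1}c$, which rearranges to $[w,c] = q^{-1}$, i.e., $q = [c,w]$.

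The main difficulty is the manipulation $q_2 q_1 q = 1 \Longrightarrow (q^{-1}c)^3 = c^3$: while short, it must be done with care, since the entire Skolem-Noether step depends on it. Everything else --- the three-case split on $k(q)$, the classical Hilbert 90 and Skolem-Noether inputs, and the factorization $p^3 = p \cdot p^2$ --- is routine.
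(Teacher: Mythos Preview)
Your proof is correct and in fact establishes something strictly stronger than the proposition: every element of $SL(1,D)$ is a \emph{single} commutator in $D^*$. The paper's proof is organized differently. In the cyclic case both arguments are essentially the same (Hilbert~90 plus Skolem--Noether), but in the non-cyclic case the paper works directly with $p^3$: from the Wedderburn conjugates $p_0=p,\,p_1,\,p_2$ and the relation $p_2p_1p_0=1$ it observes that $p_0^2p_2 = (p_0p_1^{-1})^{-1}$ and $p_2^{-1}p_0$ are each single commutators in the conjugating element $c$, and then writes $p^3=(p_0^2p_2)(p_2^{-1}p_0)$. Your route instead extracts from $q_2q_1q=1$ the identity $(q^{-1}c)^3=c^3$, so that $c$ and $q^{-1}c$ share the minimal polynomial $X^3-\gamma$, and Skolem--Noether then gives $q=[c,w]$ directly. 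This is a neater manipulation and yields the sharper conclusion; the cost is that you must treat the residual case $q\in k^*$ separately (cube roots of unity), which forces you to invoke Wedderburn's cyclicity theorem for degree~$3$ algebras, an ingredient the paper's argument does not need. Both proofs rest on the same core tools (Wedderburn factorization, Skolem--Noether, Hilbert~90), so the difference is one of packaging and strength rather than of method.
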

\begin{proof} First let us assume $p\in SL(1,D)$ is cyclic. Then $L=k(p)$ is 
a cyclic cubic subfield of $D$. Hence $L$ is  maximal subfield of $D$ and hence $N_D(p)=N_{L/k}(p)=1=N_{L/k}(p^3)$. Let $Gal(L/k)=<\sigma>$. Then, by Hilbert Theorem-90, 
there exists $q\in L$ such that $p^3=q^{-1}\sigma(q)$. Now, by Skolem-Noether theorem, we can extend $\sigma$ to an automorphism of $D$ and hence, there exists 
$x\in D^*$ such that $\sigma(q)=xqx^{-1}$. Therefore we have 
$$p^3=q^{-1}\sigma(q)=q^{-1}xqx^{-1}\in (D^*,D^*).$$
Now assume $p\in SL(1,D)$ is non-cyclic.  
Let $p=a=p_0$ in the proposition above. Then there are $p_1,p_2, 
c\in D^*$ such that the minimal polynomial $f(X)=X^3-\alpha_1X^2+\alpha_2X-\alpha_3$ of $p_0$ over $k$ factorizes in $D[X]$ as 
$$f(X)=(X-p_2)(X-p_1)(X-p_0)$$ and we have the relations 
$$cp_0c^{-1}=p_1,~cp_1c^{-1}=p_2,~cp_2c^{-1}=p_0.$$ 
It follows that $p_i\in SL(1,D),i=0,1,2$. From these relations, we get
$$cp_0c^{-1}{p_0}^{-1}=p_1{p_0}^{-1},~cp_1c^{-1}{p_1}^{-1}=p_2{p_1}^{-1},~
cp_2c^{-1}{p_2}^{-1}=p_0{p_2}^{-1}.$$Therefore the elements 
$p_1p_o^{-1},~p_2p_1^{-1},~p_0p_2^{-1}\in (D^*,D^*)$ are commutators.
Expanding $f(X)$ and using the fact that $N_D(p)=1$, we get $p_2p_1p_0=1$. Hence we have the relations
$$p_1p_0=p_2^{-1},~p_2p_1=p_0^{-1},~p_0p_2=p_1^{-1}.$$ 
Now $p_0p_1^{-1}=(p_1p_0^{-1})^{-1}\in (D^*,D^*)$ is a commutator, hence substituting $p_1^{-1}=p_0p_2$ we get $p_0(p_0p_2)=p_0^2p_2\in(D^*,D^*)$ is also a commutator. By the conjugacy relations above, we have $cp_2c^{-1}=p_0$, hence $p_2^{-1}cp_2c^{-1}=p_2^{-1}p_0\in (D^*,D^*)$, hence $p_2^{-1}p_0$ is a commutator. 
It follows that $(p_0^2p_2)(p_2^{-1}p_0)=p_0^3\in (D^*,D^*)$ is a product of two commutators. This proves the assertion.
\end{proof}
\begin{corollary}Let $p\in SL(1,D)$. Then $\mathcal{J}_{p^3}=(\mathcal{J}_p)^3$ is inner.
\end{corollary}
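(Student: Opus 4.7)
The plan is to combine the immediately preceding proposition, which expresses $p^3$ as a product of two commutators in $D^*$, with Theorem \ref{jpinner}, which handles $\mathcal{J}_c$ for a single commutator $c \in SL(1,D)$. First I would verify the identity $\mathcal{J}_{p^3} = (\mathcal{J}_p)^3$: this is immediate from the definition $\mathcal{J}_q(x,y,z)=(x,yq,q^{-1}z)$, since iterating three times simply replaces $q$ by $q^3$. Equivalently, it follows from the multiplicativity relation $\mathcal{J}_{ab} = \mathcal{J}_b \mathcal{J}_a$ recorded earlier in the paper, applied to powers of a single element (which commute with themselves).

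Next, by the preceding proposition, write $p^3 = c_1 c_2$ with $c_1, c_2 \in (D^*,D^*)$. Each $c_i$ is a commutator and hence lies in $SL(1,D)$ (since reduced norm is multiplicative and kills commutators). Using $\mathcal{J}_{ab}=\mathcal{J}_b\mathcal{J}_a$ again, one obtains
\[
\mathcal{J}_{p^3} \;=\; \mathcal{J}_{c_1 c_2} \;=\; \mathcal{J}_{c_2}\,\mathcal{J}_{c_1}.
\]
Theorem \ref{jpinner} applies to each factor on the right, showing that $\mathcal{J}_{c_1}$ and $\mathcal{J}_{c_2}$ are inner, i.e.\ belong to $Instr(A)$. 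Since $Instr(A)$ is a group, the product $\mathcal{J}_{c_2}\mathcal{J}_{c_1}$ is inner, which is the desired conclusion.

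There is essentially no obstacle here: the hard technical work has already been done in the preceding proposition (the Wedderburn factorization argument giving $p^3$ as a product of two commutators) and in Theorem \ref{jpinner} (the explicit $U$-operator computation realizing $\mathcal{J}_c$ as an inner automorphism when $c$ is a single commutator). The corollary is a purely formal consequence, and its main interest — as emphasized in the remark preceding the statement — is that it gives an alternative route to a weakened form of Corollary \ref{sl1dinner} without invoking Wang's theorem on the triviality of $SK(1,D)$.
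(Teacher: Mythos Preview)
Your proof is correct and follows exactly the intended route: the paper's own proof is the single line ``This is immediate now from Theorem \ref{jpinner},'' which tacitly combines the preceding proposition (expressing $p^3$ as a product of two commutators) with Theorem \ref{jpinner} and the multiplicativity $\mathcal{J}_{ab}=\mathcal{J}_b\mathcal{J}_a$, just as you spelled out.
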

\begin{proof}
This is immediate now from Theorem \ref{jpinner}.
\end{proof}
\noindent
Now we prove a general version of the second assertion in Corollary \ref{pureinner} for arbitrary Tits first construction Albert division algebras. 
\begin{theorem}{\label{generalinner}}
Let $A=J(D,\mu)$ be a first Tits construction Albert division algebra. Let $a\in D^*$. 
Then $\mathcal{I}_a:J(D,\mu)\longrightarrow J(D,\mu)$ given by $\mathcal{I}_a(x,y,z)=
(axa^{-1},aya^{-1}aza^{-1})$ is inner. In fact, there are $v_i\in A^*,~1\leq i\leq 4$, satisfying the relation 
$$\mathcal{I}_a=U_{n(a)^{-1}}U_{v_1}\cdots U_{v_4}\in Instr(A).$$
\end{theorem}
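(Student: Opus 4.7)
The plan is to realize $\mathcal{I}_a$ as the explicit product $U_{n(a)^{-1}} U_{v_1} U_{v_2} U_{v_3} U_{v_4}$ by direct computation using the $U$-operator formulas established earlier in this section. Observe first that $U_{n(a)^{-1}}$, interpreted as the $U$-operator of the scalar $n(a)^{-1}\cdot 1\in A$, acts on $A$ as multiplication by $n(a)^{-2}$; its function is to absorb an overall factor of $n(a)^2$ produced by the four $U_{v_i}$'s so that the composite has similitude factor $1$, as is required of the automorphism $\mathcal{I}_a$. The case $a\in k^*$ is trivial since then $\mathcal{I}_a=\mathrm{id}$, so I assume $a\notin k^*$, in which case $k(a)\subset D$ is a cubic field extension.

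The Wedderburn Factorization Theorem just established furnishes elements $a_0=a,a_1,a_2,c\in D^*$ satisfying $c a_i c^{-1}=a_{i+1}$ (indices mod $3$), $c^3\in k^*$, and the crucial relation $a_2 a_1 a_0=n(a)$ (in the cyclic case the conjugates lie in $k(a)$ and commute, so in addition $a_0 a_1 a_2=n(a)$). Using $U_{(b,0,0)}(x,y,z)=(bxb,\,n(b)b^{-1}y,\,n(b)zb^{-1})$ together with $n(a_i)=n(a)$, one computes
\[
U_{(a_2,0,0)}U_{(a_1,0,0)}U_{(a_0,0,0)}(x,y,z)=\bigl(n(a)xw,\;n(a)^3 w^{-1}y,\;n(a)^2 z\bigr),
\]
where $w:=a_0 a_1 a_2\in D^*$. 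This supplies three of the four $v_i$'s. The fourth operator $U_{v_1}$ must then transform this output into $n(a)^2\mathcal{I}_a(x,y,z)=(n(a)^2 axa^{-1},\,n(a)^2 aya^{-1},\,n(a)^2 aza^{-1})$.

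Such a $v_1$ cannot be chosen in $D_0$ alone: a product of $U$-operators attached to elements $(b_i,0,0)$ acts on the first slot only via symmetric sandwiches $x\mapsto b_1\cdots b_n\,x\,b_n\cdots b_1$, and this can equal conjugation by $a$ only if $a$ is central. Accordingly $v_1$ must involve the $D_1$ or $D_2$ components; the natural building blocks are the shift operators $U_{(0,1,0)}$ and $U_{(0,0,1)}$ (which cyclically permute the three slots of $A$ with scalings by $\mu^{\pm 1}$), combined with multiplication by the conjugating element $c$ from the Wedderburn factorization. The main obstacle is the explicit choice of $v_1$ and the verification that the full five-fold composite is exactly $\mathcal{I}_a$: balancing all three slots simultaneously --- undoing the right multiplication by $w$ on slot one, the left multiplication by $w^{-1}$ on slot two, and supplying a complete conjugation by $a$ on slot three where the three-fold Wedderburn product acts only as the scalar $n(a)^2$ --- forces a tight structure on $v_1$ and is the reason the factorization involves exactly four $U$-operators rather than three.
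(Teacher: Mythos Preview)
Your proposal has a genuine gap: you never produce the fourth operator $v_1$, and in fact your setup collapses in the cyclic case. When $k(a)/k$ is cyclic, the $a_i$ are the Galois conjugates of $a$ inside the commutative field $k(a)$, so $w=a_0a_1a_2=a_2a_1a_0=n(a)$. Your three-fold product then reduces to
\[
U_{(a_2,0,0)}U_{(a_1,0,0)}U_{(a_0,0,0)}(x,y,z)=\bigl(n(a)^2x,\;n(a)^2y,\;n(a)^2z\bigr),
\]
and after multiplying by $U_{n(a)^{-1}}$ you are left with the identity. The missing $U_{v_1}$ would therefore have to equal $\mathcal{I}_a$ itself --- the argument is circular. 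Even in the non-cyclic case you only describe $v_1$ heuristically (``shift operators \ldots\ combined with multiplication by $c$'') and acknowledge that specifying it is ``the main obstacle''; but a single $U$-operator cannot do what you need. Testing on $1=(1,0,0)$ forces $v_1^2=(n(a)aw^{-1}a^{-1},0,0)\in D_0$, while on the third slot you require $f\mapsto afa^{-1}$, and these two constraints are not simultaneously realizable by any $U_v$.

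The paper's argument is quite different and does not stay inside $D_0$. It computes
\[
U_{(0,a,0)}U_{(0,0,a)}U_{(a,0,0)}U_{(0,1,0)}(x,y,z)=\bigl(\mu n(a)^2\,aya^{-1},\;n(a)\,aza^2,\;\mu n(a)^3\,a^{-2}xa^{-1}\bigr),
\]
using the $U$-operators sitting in all three components so that the slots get cyclically permuted. The crucial step is then to observe that $p:=n(a)a^{-3}\in SL(1,D)$, so by Corollary~\ref{sl1dinner} (which rests on Wang's theorem) the automorphism $\mathcal{J}_p$ is already known to lie in $Instr(A)$. Applying $U_{n(a)^{-1}}U_{(0,0,1)}\mathcal{J}_{n(a)a^{-3}}$ to the display above yields $\mathcal{I}_a$. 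Note that the literal bound ``four $v_i$'' in the statement is not what the proof actually delivers: the factor $\mathcal{J}_{n(a)a^{-3}}$ is itself a product of several $U$-operators coming from the commutator decomposition of $p$, so the real content of the theorem is simply $\mathcal{I}_a\in Instr(A)$, not an exact count. Your attempt to hit exactly four by working only with Wedderburn conjugates in $D_0$ cannot succeed without an input of comparable strength to Corollary~\ref{sl1dinner}.
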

\begin{proof}We calculate with $a$ as follows: 
$$
U_{(0,a,0)}U_{(0,0,a)}U_{(a,0,0)}U_{(0,1,0)}(x,y,z)= U_{(0,a,0)}U_{(0,0,a)}U_{(a,0,0)}(\mu y,z,\mu x)$$
$$~~~~~~~~~~~~~~~~~~~~~~~~~~~~= U_{(0,a,0)}U_{(0,0,a)}(\mu aya,n(a)a^{-1}z,\mu n(a)x a^{-1})$$
$$~~~~~~~~~~~~~~~~~~~~~~~~~~~~ = U_{(0,a,0)}(n(a)^2a^{-1}xa^{-1}, n(a)ay,n(a)za)~~~~~$$
$$~~~~~~~~~~~~~~~~~~~~~~~~~~~~~= (\mu n(a)^2aya^{-1},n(a)aza^2,\mu n(a)^3a^{-2}xa^{-1}).~~$$

We now note that $n(a)a^{-3}\in SL(1,D)$. Hence $\mathcal{J}_{n(a)a^{-3}}$ is inner by Corollary (\ref{sl1dinner}). We apply the transformation 
$U_{n(a)^{-1}}
U_{(0,0,1)}\mathcal{J}_{n(a)a^{-3}}$ to the above and get \\
\vskip2mm
$U_{n(a)^{-1}} U_{(0,0,1)}\mathcal{J}_{n(a)a^{-3}}(\mu n(a)^2aya^{-1}, n(a)aza^2,\mu n(a)^3a^{-2}xa^{-1})$ 
\begin{eqnarray*}
& = & n(a)^{-2}U_{(0,0,1)}(\mu n(a)^2aya^{-1},n(a)^2aza^{-1},\mu n(a)^2axa^{-1})\\
& = & n(a)^{-2}(n(a)^2axa^{-1},n(a)^2aya^{-1},n(a)^2aza^{-1})\\
& = & (axa^{-1},aya^{-1},aza^{-1})=(axa^{-1},aya^{-1},aza^{-1})
=\mathcal{I}_a(x,y,z).
\end{eqnarray*}
This proves the assertion.
\end{proof}
\begin{corollary}{\label{generalinnercor}} Let $A$ be a first Tits construction Albert division algebra over $k$ and let $\psi\in Aut(A)$ be such that $\psi(D_+)=D_+$ for a subalgebra $D_+\subset A$. Then $\psi$ is an inner automorphism of $A$.  
. 
\end{corollary}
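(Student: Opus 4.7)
The plan is to reduce the corollary to a combination of Theorem \ref{generalinner} and Corollary \ref{sl1dinner} via the factorization $\psi_{a,b} = \mathcal{J}_{ab^{-1}} \mathcal{I}_a$ that was already exploited in the proof of Theorem \ref{purefixeda2}.

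First I would use the fact that the inclusion $D_+ \hookrightarrow A$ extends to an isomorphism $\theta: A \longrightarrow J(D,\mu)$ for a suitable $\mu \in k^*$ with $\theta(D_+) = D_0$. The conjugate $\tilde\psi = \theta \psi \theta^{-1}$ is then an automorphism of $J(D,\mu)$ stabilizing $D_0$. By Lemma \ref{extendaut}, there exist $a, b \in D^*$ with $N_D(a) = N_D(b)$ such that $\tilde\psi = \psi_{a,b}$, where $\psi_{a,b}(x,y,z) = (axa^{-1}, ayb^{-1}, bza^{-1})$.

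Next I would observe, exactly as in the computation in the proof of Theorem \ref{purefixeda2}, that $\psi_{a,b} = \mathcal{J}_{ab^{-1}} \circ \mathcal{I}_a$. The condition $N_D(a) = N_D(b)$ is precisely what makes $ab^{-1} \in SL(1,D)$, so Corollary \ref{sl1dinner} applies and shows that $\mathcal{J}_{ab^{-1}}$ is inner in $J(D,\mu)$. Theorem \ref{generalinner} shows that $\mathcal{I}_a$ is inner in $J(D,\mu)$. Therefore $\tilde\psi \in Instr(J(D,\mu))$.

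Finally I would transfer this back to $A$. As already noted in the proof of Theorem \ref{fixedinner}, conjugation by $\theta$ carries $U$-operators to $U$-operators via the identity $\theta^{-1} U_c \theta = U_{\theta^{-1}(c)}$ for $c \in J(D,\mu)$, so writing $\tilde\psi = U_{c_1} \cdots U_{c_r}$ yields $\psi = U_{\theta^{-1}(c_1)} \cdots U_{\theta^{-1}(c_r)} \in Instr(A)$, proving the corollary. There is no real obstacle here beyond assembling the pieces already in place; the substantive work was carried out in Theorem \ref{generalinner} (which in turn invoked Wang's theorem via Corollary \ref{sl1dinner}) and in the factorization lemma for automorphisms stabilizing $D_0$.
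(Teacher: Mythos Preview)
Your proposal is correct and follows essentially the same route as the paper's proof: reduce to $A=J(D,\mu)$ with $\psi$ stabilizing $D_0$, write $\psi=\psi_{a,b}=\mathcal{J}_{ab^{-1}}\mathcal{I}_a$, and invoke Theorem~\ref{generalinner} and Corollary~\ref{sl1dinner} to conclude both factors lie in $Instr(A)$. The only difference is cosmetic---you spell out the transport along $\theta$ explicitly, whereas the paper absorbs this into the phrase ``we may assume $A=J(D,\mu)$''.
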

\begin{proof}As in arguments seen before, we may assume $A=J(D,\mu)$ for a suitable scalar $\mu\in k^*$ and $D_+=D_0$. Hence we have $\psi\in Aut(A)$ such that 
$\psi(D_0)=D_0$. Then it follows that $\psi=\psi_{a,b}$ for some $a,b\in D^*$ with $N_D(a)=N_D(b)$. We have 
$$\psi(x,y,z)=(axa^{-1},ayb^{-1},bza^{-1})=\mathcal{J}_{ab^{-1}}\mathcal{I}_a(x,y,z).$$
Hence $\psi=\mathcal{J}_{ab^{-1}}\mathcal{I}_a$ and we have seen that $\mathcal{I}_a\in Instr(A)$ and $\mathcal{J}_{ab^{-1}}\in Instr(A)$. Hence 
the assertion follows.
\end{proof}
\noindent
\vskip1mm
\noindent
The following lemma and its corollary clarify the action of $Str(A)$ on $A^*$.  
\begin{lemma}{\label{technical}}
Let $A$ be a first Tits construction Albert division algebra. Assume $A=J(D,\mu)$. Then for any $a\in D^*$, there exists $\phi\in Instr(A)$ such that 
$\chi=R_{N_D(a)^{-1}}\phi\in C.Instr(A)$ maps $(1,0,0)$ to $(a,0,0)$. Here $C$ denotes the subgroup of $Str(A)$ consisting of scalar multiplications on $A$. 
\end{lemma}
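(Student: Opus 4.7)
The plan is to use the explicit formulas for the $U$-operators on $J(D,\mu)$ derived immediately above the statement, chaining two of them so that $(1,0,0)$ is moved through the third summand and lands on a scalar multiple of $(a,0,0)$.

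First, specializing the boxed formula $U_{(0,b,0)}(d,e,f)=(\mu e b^{\#},bfb,\mu b^{\#}d)$ to $b=1$ and $(d,e,f)=(1,0,0)$ gives $U_{(0,1,0)}(1,0,0)=(0,0,\mu)$. Next, for any $c\in D^{*}$, the formula $U_{(0,0,c)}(d,e,f)=(\mu^{-1}c^{\#}f,\mu^{-1}d c^{\#},cec)$ evaluated on $(0,0,\mu)$ gives $U_{(0,0,c)}(0,0,\mu)=(c^{\#},0,0)$. Since $D$ has degree $3$, one has $c^{\#}=N_{D}(c)c^{-1}$, and therefore the choice $c=N_{D}(a)a^{-1}$ yields $c^{\#}=N_{D}(a)^{2}(a^{-1})^{\#}=N_{D}(a)^{2}\cdot N_{D}(a)^{-1}a=N_{D}(a)\,a$.

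Setting $\phi:=U_{(0,0,N_{D}(a)a^{-1})}\circ U_{(0,1,0)}$ therefore produces an element of $Instr(A)$ with $\phi(1,0,0)=(N_{D}(a)\,a,0,0)$. Composing with the scalar multiplication $R_{N_{D}(a)^{-1}}$ sends $(N_{D}(a)\,a,0,0)$ to $(a,0,0)$, so $\chi:=R_{N_{D}(a)^{-1}}\phi\in C\cdot Instr(A)$ has the asserted effect on $(1,0,0)$. There is no genuine obstacle here; the only point requiring care is matching scales, i.e.\ choosing $c$ so that $c^{\#}$ is the correct multiple of $a$ to absorb the prescribed normalization $R_{N_{D}(a)^{-1}}$ (the naive choice $c=a^{-1}$ instead gives $c^{\#}=N_{D}(a)^{-1}a$ and would need correction by $R_{N_{D}(a)}$ rather than by $R_{N_{D}(a)^{-1}}$).
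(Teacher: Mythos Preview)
Your proof is correct and follows essentially the same approach as the paper: compose a $U$-operator supported in the second summand with one supported in the third so that $(1,0,0)$ passes through $D_2$ and returns as $N_D(a)\,(a,0,0)$, then rescale. The paper chooses $\phi=U_{(0,0,1)}U_{(0,a^{\#},0)}$ whereas you choose $\phi=U_{(0,0,N_D(a)a^{-1})}U_{(0,1,0)}$; since $N_D(a)a^{-1}=a^{\#}$, the two differ only in which factor carries $a^{\#}$, and both land on $(N_D(a)a,0,0)$ before the final scalar correction.
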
  
\begin{proof}We have for $\chi=\mathcal{R}_{N_D(a)^{-1}}U_{(0,0,1)}U_{(0,a^{\#},0)}$, 
\begin{eqnarray*}
\chi(1,0,0) & = & \mathcal{R}_{N_D(a)^{-1}}U_{(0,0,1)}(0,0,\mu N_D(a)a)\\
& = & \mathcal{R}_{N_D(a)^{-1}}(N_D(a)a,0,0)\\
& = & (a,0,0).
\end{eqnarray*}
Hence $\phi=U_{(0,0,1)}U_{(0,a^{\#},0)}\in Instr(A)$ does the job.
\end{proof}
\noindent
{\bf Remark :} We note that in the above lemma, $\chi\in N_D.Instr(A)$, where, we abuse notation and denote by $N_D$ the subgroup of $C$ consisting of 
scalar multiplications by norms from $D^*$. 
\begin{corollary}{\label{puretechnical}}Let $A$ be a pure Tits fist construction Albert algebra and $a$ an invertible element of $A$. Then there exists $\chi\in 
C.Instr(A)$ such that 
$\chi(1)=a$. 
\end{corollary}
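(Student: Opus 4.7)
The plan is to reduce to Lemma \ref{technical} by finding a first Tits construction presentation $A \simeq J(D,\mu)$ in which the given invertible element $a$ lies in the distinguished copy of $D_+$.

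First, dispose of the trivial case: if $a \in k \cdot 1$, say $a = t$ with $t \in k^*$, then $\mathcal{R}_t \in C$ satisfies $\mathcal{R}_t(1) = a$. Assume henceforth that $a \notin k$. Since $A$ is a division algebra (pure first Tits construction), the subalgebra $L = k(a)$ is a cubic subfield of $A$ containing $a$.

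Second, apply the Petersson--Racine extension theorem (\cite{PR2}, Cor.~4.5--4.6): the inclusion $L \hookrightarrow A$ extends to an embedding of a first Tits construction $J(D',\lambda) \hookrightarrow A$ for some degree $3$ central simple $k$-algebra $D'$ and $\lambda \in k^*$, with $L$ contained in the canonical copy of $D'_+$. Because $A$ is a \emph{pure} first Tits construction, every $9$-dimensional subalgebra of $A$ is of the form $D_+$ for a degree $3$ central division algebra $D$ over $k$ (as recalled in Section \ref{albert}); in particular, there is an isomorphism $\theta\colon A \longrightarrow J(D,\mu)$ for a suitable $\mu \in k^*$ such that $\theta(L) \subset D_0$, so $\theta(a) = (a',0,0)$ for some $a' \in D^*$. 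This is the crucial step and the main obstacle, since without purity the $9$-dimensional subalgebra containing $L$ could be of hermitian type and Lemma \ref{technical} would not directly apply.

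Third, invoke Lemma \ref{technical} inside $J(D,\mu)$: there exists $\phi' \in Instr(J(D,\mu))$ such that $\chi' = \mathcal{R}_{N_D(a')^{-1}}\phi' \in C.Instr(J(D,\mu))$ carries $(1,0,0)$ to $(a',0,0)$. Since $\theta$ is an algebra isomorphism, conjugation by $\theta$ preserves both $C$ (scalar multiplications commute with algebra isomorphisms, as $\theta(1)=1$) and $Instr$ (because $\theta^{-1} U_x \theta = U_{\theta^{-1}(x)}$). Therefore $\chi := \theta^{-1}\chi'\theta$ lies in $C.Instr(A)$, and since $\theta(1_A) = (1,0,0)$, we obtain $\chi(1) = \theta^{-1}(a',0,0) = a$, as required.
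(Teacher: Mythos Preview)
Your proof is correct and follows essentially the same route as the paper's: embed the invertible element $a$ in a $9$-dimensional subalgebra which, by purity, must be of the form $D_+$; transport to $J(D,\mu)$ via the induced isomorphism $\theta$; apply Lemma~\ref{technical} there; and conjugate back using $\theta^{-1}U_x\theta=U_{\theta^{-1}(x)}$. The only difference is that you spell out explicitly (via Petersson--Racine and the purity hypothesis) why $a$ lies in some $D_+$, whereas the paper simply asserts this embedding.
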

\begin{proof}Let we can imbed $a$ in a subalgebra of $A$ of the form $D_+$ for a degree $3$ central division algebra over $k$. Let $\theta:A\longrightarrow J(D,\mu)$ be an isomorphism extending the inclusion $D_+\hookrightarrow A$ for some $\mu\in k^*$ and $\theta(D_+)=D_0$. Let $\chi'\in C.Instr(J(D,\mu))$ be as in the lemma above. Then $\chi'(1,0,0)=(a,0,0)$. Let $\chi=\theta^{-1}\chi'\theta:A\longrightarrow A$. Then 
$$\chi(1)=\theta^{-1}\chi'\theta(1)=\theta^{-1}\chi'(1,0,0)=\theta^{-1}(a,0,0)
=a.$$
Further, 
$$\chi=\theta^{-1}\chi'\theta=\theta^{-1}\mathcal{R}_{N_D(a)^{-1}}U_{(0,0,1)}U_{(0,a^{\#},0)}\theta=\mathcal{R}_{N_D(a)^{-1}}U_{\theta^{-1}(0,0,1)}
U_{\theta^{-1}(0,a^{\#},0)}.$$ Hence $\chi\in C.Instr(A)$.  
\end{proof}
The theorem below tells us about the norm similarities of an Albert algebra that stabilize a $9$-dimensional subalgebra.  
\begin{theorem}{\label{structuretypea2}}Let $A$ be a first Tits construction Albert algebra. Let $\psi\in Str(A)$ be such that $\psi(D_+)=D_+$ for a subalgebra $D_+\subset A$. Then 
$\psi\in N_DInstr(A)$.
\end{theorem}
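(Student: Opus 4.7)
The plan is to factor $\psi$ as a product of an element of $N_D \cdot Instr(A)$ that matches $\psi$ at the identity, followed by an automorphism stabilizing $D_+$, which is inner by Corollary \ref{generalinnercor}. The desired conclusion then follows immediately.

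As in earlier proofs in this section, I would first reduce to the case $A = J(D,\mu)$ for a suitable $\mu \in k^*$ with $D_+ = D_0$, via the isomorphism induced by the inclusion $D_+ \hookrightarrow A$. Since $\psi$ is a norm similarity stabilizing $D_+$, and $1 \in D_+$ is invertible, $\psi(1) \in D_+$ is invertible; write $\psi(1,0,0) = (a,0,0)$ with $a \in D^*$.

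Next, apply Lemma \ref{technical} to obtain $\chi = \mathcal{R}_{N_D(a)^{-1}} U_{(0,0,1)} U_{(0,a^{\#},0)} \in N_D \cdot Instr(A)$ satisfying $\chi(1,0,0) = (a,0,0)$. The key observation is that this particular $\chi$ in fact stabilizes $D_+$: a short direct computation using the $U$-operator formulas displayed in this section (together with the identity $(a^{\#})^{\#} = N_D(a) a$) shows that on $D_0$ the map $\chi$ acts as left multiplication by $a$, i.e., $\chi(d,0,0) = (ad,0,0)$. In particular $\chi(D_+) = D_+$.

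Finally, set $\eta = \chi^{-1}\psi \in Str(A)$. Then $\eta(1,0,0) = (1,0,0)$; since the similarity factor $\alpha$ of $\eta$ must satisfy $N(\eta(1)) = \alpha N(1)$ and $N(1) = 1$, we get $\alpha = 1$, so $\eta$ is a norm isometry fixing the identity, hence an automorphism of $A$. Both $\chi$ and $\psi$ stabilize $D_+$, so $\eta$ does as well. Corollary \ref{generalinnercor} then gives $\eta \in Instr(A)$, and the factorization
$$\psi = \chi \eta \in (N_D \cdot Instr(A)) \cdot Instr(A) = N_D \cdot Instr(A)$$
completes the argument. The only point requiring care is the verification that the specific $\chi$ from Lemma \ref{technical} stabilizes $D_+$; the remaining steps are direct consequences of results already established.
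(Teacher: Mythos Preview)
Your proposal is correct and follows essentially the same approach as the paper: reduce to $A=J(D,\mu)$ with $D_+=D_0$, use the explicit $\chi=\mathcal{R}_{N_D(a)^{-1}}U_{(0,0,1)}U_{(0,a^{\#},0)}$ from Lemma~\ref{technical}, verify by the $U$-operator formulae that $\chi(x,0,0)=(ax,0,0)$ so $\chi(D_0)=D_0$, and then observe that $\chi^{-1}\psi$ is an automorphism stabilizing $D_0$, hence inner by Corollary~\ref{generalinnercor}. The paper's proof is line-for-line the same, only with the variable named $c$ instead of $a$.
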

\begin{proof}By a simple argument, we may assume that $A=J(D,\mu)$ and identify $D_+$ with $D_0\subset J(D,\mu)$. Let $\psi\in Str(A)$ be such that 
$\psi(D_0)=D_0$. In particular, it follows that $\psi(1,0,0)=(c,0,0)$ for some $c\in D^*$. By the lemma (and its proof) above, 
$$\chi(1,0,0)=\mathcal{R}_{N_D(c)^{-1}}U_{(0,0,1)}U_{(0,c^{\#},0)}(1,0,0)=(c,0,0).$$
Moreover, for $x\in D$, 
\begin{eqnarray*}
\chi(x,0,0) & = & N_D(c)^{-1}U_{(0,0,1)}U_{(0,c^{\#},0)}(x,0,0)\\
& = & N_D(c)^{-1}U_{(0,0,1)}(0,0,\mu c^{\#\#}x)\\
& = & N_D(c)^{-1}(\mu^{-1}\mu N_D(c)cx,0,0)\\
 & = & (cx,0,0).
\end{eqnarray*}
Hence $\chi(D_0)=D_0$. Moreover, we see that 
$$\chi^{-1}\psi(1,0,0)=\chi^{-1}(c,0,0)=(1,0,0).$$
Therefore $\phi=\chi^{-1}\psi\in Aut(A)$ and $\phi(D_0)=D_0$. Hence $\phi=\psi_{a,b}$ for suitable $a,b\in D^*$. We have proved already that 
$\psi_{a,b}\in Instr(A)$ (see Corollary \ref{generalinnercor}). Hence $\psi=\chi\psi_{a,b}\in N_DInstr(A)$, by the lemma above. 
\end{proof} 
\noindent
\noindent
In the spirit of the above theorem, combining Theorem \ref{cyclic} with Lemma \ref{technical} gives,  
\begin{theorem}{\label{cyclicstructure}}Let $A$ be an Albert division algebra over $k$ arising from the first Tits construction. Let $E\subset A$ be a cubic cyclic extension of $k$ contained in $A$ as a subalgebra. Let $\psi\in Str(A)$ be such that $\psi(E)=E$. Then $\psi\in C.Instr(A).H$, where $H$ denotes the subgroup of $Aut(A)$ generated by automorphisms of type $A_2$.
\end{theorem}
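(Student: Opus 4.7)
The strategy is to run the scheme of Theorem \ref{structuretypea2} but with the cubic cyclic subfield $E$ in place of a $9$-dimensional subalgebra, and then to absorb the residual automorphism into $H$ via Theorem \ref{cyclic}. First, since $A$ is a first Tits construction and $E\subset A$ is a cubic subfield, by (\cite{PR2}, Cor. 4.6) there is a degree $3$ central division algebra $D$ over $k$ and an isomorphism $\theta:A\to J(D,\mu)$ with $\theta(E)\subset D_0$. Conjugating by $\theta$, we may assume $A=J(D,\mu)$ and $E\subset D_0$; then $\psi(1,0,0)=(c,0,0)$ for some $c\in E^*\subset D^*$, because $\psi$ preserves $E$ and $1\in E$.

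The next step is to invoke Lemma \ref{technical} with $a=c$, yielding
$$\chi=\mathcal{R}_{N_D(c)^{-1}}U_{(0,0,1)}U_{(0,c^{\#},0)}\in C\cdot Instr(A),\qquad \chi(1,0,0)=(c,0,0).$$
The explicit computation carried out in the proofs of Lemma \ref{technical} and Theorem \ref{structuretypea2} shows that $\chi$ acts on $D_0$ simply as left multiplication by $c$. Since $c\in E$ and $E$ is a subfield of $D$, this forces $\chi(E)=cE=E$. Setting $\phi_0=\chi^{-1}\psi\in Str(A)$, one has $\phi_0(1)=1$, hence $\phi_0\in Aut(A)$, and $\phi_0(E)=E$.

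The restriction $\phi_0|_E$ is then an element of $Gal(E/k)=\langle\sigma\rangle$; write $\phi_0|_E=\sigma^r$. By the Skolem-Noether theorem, $\sigma:E\to E$ extends to an inner automorphism of $D$, and this further extends to $\widetilde{\sigma}_D=\mathcal{I}_a\in Aut(A)$ that stabilizes the $9$-dimensional subalgebra $D_+$; in particular $\widetilde{\sigma}_D$ is of type $A_2$ and hence lies in $H$. The composition $\phi_1=\widetilde{\sigma}_D^{-r}\phi_0$ then fixes $E$ pointwise, so it is a cyclic automorphism of $A$, and by Theorem \ref{cyclic} it lies in $H$. Therefore $\phi_0=\widetilde{\sigma}_D^{r}\phi_1\in H$, and finally $\psi=\chi\phi_0\in C\cdot Instr(A)\cdot H$, as required. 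The one delicate point is the verification that the specific $\chi$ extracted from Lemma \ref{technical} preserves $E$ and not merely $D_0$; this is what makes the present analogue of Theorem \ref{structuretypea2} go through, and it follows at once from the left-multiplication description of $\chi|_{D_0}$ together with $c\in E$.
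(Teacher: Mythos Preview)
Your proof is correct and follows essentially the same route as the paper's: reduce to $A=J(D,\mu)$ with $E\subset D_0$, use the explicit $\chi=\mathcal{R}_{N_D(c)^{-1}}U_{(0,0,1)}U_{(0,c^{\#},0)}$ from Lemma~\ref{technical} (which acts as left multiplication by $c$ on $D_0$, hence preserves $E$), and then correct the resulting automorphism $\chi^{-1}\psi$ by a power of the $A_2$-type extension $\widetilde{\sigma}_D=\mathcal{I}_a$ to land in the cyclic case and invoke Theorem~\ref{cyclic}. The only cosmetic difference is that you treat all values of $\phi_0|_E=\sigma^r$ uniformly, whereas the paper splits into the cases $r=0$ and $r\neq 0$.
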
 
\begin{proof}Since $E/k$ is cyclic cubic extension, there is a subalgebra $D_+\subset A$ such that $E\subset D_+$. We may assume 
(see proof of Theorem \ref{fixedinner}), that $A=J(D,\mu)$ for some $\mu\in k^*$. Hence we may identify $D_+$ with $D_0\subset A$ and $E\subset D_0$. 
Let $\psi(1)=\psi(1,0,0)=(c,0,0),~c\in E$. Let $\chi=\mathcal{R}_{N_E(c)^{-1}}U_{(0,0,1)}U_{(0,c^{\#},0)}$. Then we have, by a computation exactly as in 
the proof of Lemma \ref{technical}, 
$$\chi(1,0,0)=(c,0,0)=\psi(1,0,0).$$
Note also $\chi(E)=E$, in fact, for any $x\in E,~\chi(x,0,0)=(cx,0,0)$, this follows by a computation exactly along the lines of the proof of 
Theorem \ref{structuretypea2}. We have therefore $\chi^{-1}\psi(1,0,0)=(1,0,0)$ and $\chi^{-1}\psi\in Aut(A)$ with $\chi^{-1}\psi(E)=E$. If $\chi^{-1}\psi$ fixes 
$E$ pointwise, then by Theorem \ref{cyclic}, it follows that $\chi^{-1}\psi$ is a product of automorphisms of type $A_2$. Hence $\psi=\chi\phi$, where 
$\chi\in N_EInstr(A)$ and $\phi\in Aut(A)$ is a product of $A_2$ type automorphisms. In the other case, $\chi^{-1}\psi|E=\sigma,~~Gal(E/k)=<\sigma>$. Let $\widetilde{\sigma}$ be an extension of $\sigma$ to an automorphism of $A$ with $\widetilde{\sigma}=\mathcal{I}_a,~a\in D$ (see proof of Theorem \ref{cyclic}). Then 
$\widetilde{\sigma}^{-1}\chi^{-1}\psi\in Aut(A)$ and fixes $E$ pointwise. Hence by the earlier case and the fact that $\widetilde{\sigma}\in N_D.Instr(A)$, it follows that $\psi$ is a product of an element of $C.Instr(A)$ and automorphisms of type $A_2$. 
\end{proof}
\noindent 
In light of these two theorems, one can raise the following question: \\
\noindent
{\bf Question :} Let $A$ be an Albert division algebra over $k$, arising from the first construction. Let $D_+\subset A$ and $X=Str(A)(D_+)$ be the orbit of $D_+$ under the structure group. Does the subgroup $C.Instr(A)$ act transitively on $X$? What if we consider the orbit $Y=Str(A)(E)$ of a cubic cyclic subfield of $A$?\\
\noindent
{\bf Remark :} An affirmative answer to the first would prove the Tits-Weiss conjecture for first constructions. While an affirmative answer to the second would 
prove that for first constructions, any norm similarity is a product of an element of $C.Instr(A)$ and an element of $H$. 
\noindent
For pure first constructions, we have a stronger result:   
\begin{theorem}{\bf (Tits-Weiss Conjecture) :} Let $A$ be a pure first construction Albert division algebra over a field $k$. Then
$$\frac{Str(A)}{C.Instr(A)}=\{1\}.$$
\end{theorem}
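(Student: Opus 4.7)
The plan is to reduce the statement, which concerns norm similarities, to the corresponding statement about automorphisms already established in Corollary \ref{pureinner}, using the transitivity-type result of Corollary \ref{puretechnical}. The guiding slogan is: modulo $C.Instr(A)$, every norm similarity may be arranged to fix the identity, and any norm similarity fixing $1$ is an automorphism.

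First, I would pick an arbitrary $\psi \in Str(A)$ and form $a = \psi(1) \in A$. Since $A$ is a division algebra and $\psi$ is bijective, $a$ is invertible. The purity hypothesis on $A$ enters exactly here: by Corollary \ref{puretechnical}, there exists $\chi \in C.Instr(A)$ with $\chi(1) = a$. Setting $\eta := \chi^{-1}\psi \in Str(A)$, one has $\eta(1) = 1$. A one-line check on the similarity factor, evaluating $\mathcal{N}(\eta(x)) = \gamma\,\mathcal{N}(x)$ at $x = 1$, forces $\gamma = 1$, so $\eta$ is actually a norm isometry. By the remark at the end of Section 2 (a norm isometry is an automorphism if and only if it fixes $1$), we conclude $\eta \in Aut(A)$.

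The final step invokes Corollary \ref{pureinner}: because $A$ is a pure first Tits construction Albert division algebra, every automorphism of $A$ is inner, so $\eta \in Instr(A) \subset C.Instr(A)$. Consequently $\psi = \chi\eta \in C.Instr(A)$, and since $\psi$ was arbitrary, $Str(A) = C.Instr(A)$, which is the desired conclusion.

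The main obstacles have in fact already been surmounted in Section 5: they are Corollary \ref{pureinner} (which rests on the fixed-point theorem, the reduction of automorphisms to those of type $A_2$, Theorem \ref{purefixeda2}, Theorem \ref{fixedinner}, and ultimately Wang's triviality of $SK_1$ for degree $3$ division algebras) and Corollary \ref{puretechnical} (the explicit construction showing $C.Instr(A)$ acts transitively on $A^*$). Granted those two ingredients, the assembly given above is essentially immediate, so no further hard input is required at this final stage.
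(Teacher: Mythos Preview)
Your proposal is correct and follows essentially the same route as the paper: reduce to automorphisms via Corollary~\ref{puretechnical} by adjusting an arbitrary $\psi\in Str(A)$ so that it fixes $1$, then invoke Corollary~\ref{pureinner}. The only cosmetic difference is that the paper takes $\chi$ with $\chi(a)=1$ and considers $\chi\psi$, whereas you take $\chi$ with $\chi(1)=a$ and consider $\chi^{-1}\psi$; since $C.Instr(A)$ is a group these are equivalent, and your explicit check that the similarity factor becomes $1$ is a welcome clarification of a step the paper leaves implicit.
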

\begin{proof}
Let $\phi\in Str(A)$, where $A$ is a pure first Tits construction Albert division algebra over $k$. Let $\phi(1)=a$. Then, by Corollary (\ref{puretechnical}), 
there exists $\chi\in C.Instr(A)$ such that $\chi(a)=1$. Hence $\chi\phi(1)=1$. 
This shows that $\chi\phi$ is a norm isometry. Further, since $\chi\phi(1)=1$, it is an automorphism of $A$. Hence we have reduced the problem to proving that every automorphism of a pure first Tits construction Albert division algebra is inner. This has already been proved! (see Corollary \ref{pureinner}). 
\end{proof} 
\noindent
{\bf The case of Reduced Albert algebras :}One can discuss the Tits-Weiss conjecture in the setting of reduced Albert algebras as well. As we shall see, in this case, it follows rather easily from known results of Jacobson, that the assertion of the conjecture is indeed true. This doesn't seem to be explicit in the literature, we therefore record it below. In the following discussion, we will denote by $Isom(A)$ the subgroup of $Str(A)$ consisting of all isometries of the norm $N$ of $A$. 
\begin{theorem}{\label{reduced}}Let $A$ be a reduced Albert algebra over a field $k$. Then $Str(A)=C.Instr(A)$. 
\end{theorem}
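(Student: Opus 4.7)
The plan is to mimic the strategy already used for pure first Tits construction Albert division algebras, reducing first to the case of an automorphism and then invoking a classical theorem of Jacobson. Let $\phi\in Str(A)$. I will first produce $\chi\in C\cdot Instr(A)$ with $\chi(1)=\phi(1)$, so that $\chi^{-1}\phi\in Str(A)$ fixes $1$ and therefore has similarity factor $1$; being a norm isometry fixing $1$, it is then an automorphism of $A$ by the standard characterization recalled at the end of Section~2. In this way the conjecture for reduced $A$ reduces to the statement $Aut(A)\subseteq Instr(A)$.

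To produce $\chi$, I would use that the invertible element $a:=\phi(1)$ is contained in some $9$-dimensional subalgebra $S\subseteq A$ (which exists by the coordinatization of $\mathcal{H}_3(C,\Gamma)$), and then apply the $U$-operator recipe of Lemma~\ref{technical} in the case $S=D_+$, or its obvious analogue based on the second Tits construction $U$-operator formulas derived in this section in the case $S=\mathcal{H}(B,\tau)$. Nothing in the proof of Lemma~\ref{technical} uses that $A$ is a division algebra; the only input is the extension of the inclusion $S\hookrightarrow A$ to a Tits construction realization of $A$, which is available in general. So a suitable $\chi\in C\cdot Instr(A)$ with $\chi(1)=a$ is produced by an entirely formal computation.

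With the reduction to automorphisms complete, the result follows from the classical theorem of Jacobson (\cite{J1}) that every automorphism of a reduced Albert algebra is inner, i.e.\ lies in $Instr(A)$. The idea behind Jacobson's theorem is that, for reduced $A$, the generators of $Aut(A)$ --- those coming from $Aut$ of a coordinate Cayley algebra together with the ``elementary'' automorphisms $\psi_{a,b}$ stabilizing a $9$-dimensional subalgebra --- can each be realized as products of $U$-operators $U_a$ with $a$ invertible. The main obstacle in writing up the argument will be giving a clean, uniform treatment of the case distinction (first vs.\ second Tits construction for $S$) in producing $\chi$; once that is handled, everything else is a routine assembly, and the classical Jacobson theorem is invoked as a black box.
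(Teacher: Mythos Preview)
Your reduction strategy is sound but takes a longer route than the paper does, and one step is not as immediate as you suggest.

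The paper does \emph{not} reduce to $Aut(A)$; it only reduces to $Isom(A)$. Given $\psi\in Str(A)$ with $\psi(1)=a$ and $\alpha=N(a)\neq 0$, the paper simply takes $\chi=\mathcal{R}_{\alpha^{-1}}U_a\psi$. A direct computation of the similarity factor (equivalently, of $N(\chi(1))=N(\alpha^{-1}a^3)=1$) shows $\chi\in Isom(A)$. Then the paper invokes Jacobson's results not on $Aut(A)$ but on $Isom(A)$: Theorem~9 of \cite{J4} (split case) and Theorem~13 of \cite{J1} (coordinate Cayley algebra a division algebra) say precisely that for a reduced Albert algebra every norm isometry is a product of $U$-operators, i.e.\ $Isom(A)\subseteq Instr(A)$. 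Since $\mathcal{R}_{\alpha^{-1}}U_a\in C\cdot Instr(A)$, this gives $\psi\in C\cdot Instr(A)$ immediately. No embedding of $a$ into a $9$-dimensional subalgebra, no Tits-construction realization, and no first/second case split is needed.

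Your approach would also work, but the step ``$a=\phi(1)$ lies in some $9$-dimensional subalgebra $S$'' is not a consequence of the bare coordinatization $A=\mathcal{H}_3(C,\Gamma)$: a generic hermitian matrix has three off-diagonal octonion entries that need not lie in a common quaternion subalgebra $Q\subset C$, so one does not get $S=\mathcal{H}_3(Q,\Gamma)$ for free. One can still produce such an $S$ (e.g.\ by first passing to the \'etale subalgebra $k[a]$ and then extending), but this needs an argument. The paper's simpler reduction to $Isom(A)$ sidesteps this difficulty entirely, and the black box you want---``$Aut(A)\subseteq Instr(A)$ for reduced $A$''---is in any case a special case of the $Isom(A)$ statements in \cite{J4}, \cite{J1} that the paper cites.
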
 
\begin{proof}Let $\psi\in Str(A)$ and $\psi(1)=a$. Let $\alpha=N(a)$. Then $\alpha\neq 0$ and $U_a$ is invertible. Consider 
$\chi=\mathcal{R}_{\alpha^{-1}}U_a\psi\in C.Instr(A)$. Then we have,
$$\chi(1)=\alpha^{-1}U_a\psi(1)=\alpha^{-1}U_a(a)=\alpha^{-1}a^3.$$
Hence,
$$N(\chi(1))=\alpha^{-3}N(a^3)=\alpha^3N(a)^3=\alpha^{-3}\alpha^3=1.$$
It follows that $\chi\in Isom(A)$. It now suffices to prove that isometries of $N$ are inner. When $A$ is split, this follows from Theorem 9, (\cite{J4}), which proves that the norm preserving group $Isom(A)$ coincides with the subgroup of $Str(A)$ consisting of all mappings $\eta=U_{a_1}\cdots U_{a_r}$ with $\prod N(a_i)=1$. If the coordinate algebra of $A$ is a division algebra, this follows from Theorem 13, (\cite{J1}), which proves that $Isom(A)$ coincides with the subgroup of $Str(A)$ generated by elements $\zeta=\prod U_{a_i}$, where the $a_i$ for a given $\zeta$ are all contained in some nonsplit $9$-dimensional reduced simple subalgebra of degree $3$ and $\prod N(a_i)=1$. 
\end{proof}
\noindent
{\bf $R$-equivalence in $k$-forms of $F_4$:} For the notion of $R$-equivalence, we refer to the book (\cite{Vos}, Chapter 6) or the book 
(\cite{M}, Chapter II, 14). Let $X$ be an irreducible variety over a field $k$, with $X(k)$ nonempty. Manin in \cite{M} introduced the notion of 
$R$-equivalence on $X$; call points $x,y\in X(k)$ $R$-equivalent if there exist points $x=x_0,x_1,\cdots,x_n=y\in X(k)$ and rational maps 
$f_i:\mathbb{P}^1\rightarrow X,~1\leq i\leq n$, all defined over $k$, such that $f_i(0)=x_{i-1},~f_i(\infty)=x_i$. In case $X=G$, a connected algebraic group, the set of elements of $G(k)$ which are $R$-equivalent to $1\in G(k)$ is a normal subgroup $RG(k)$ of $G(k)$. The quotient $G(k)/RG(k)$ is denoted by $G(k)/R$. We now prove 
\begin{theorem}{\label{rtrivial}}Let $A$ be a pure first construction Albert division algebra over a field $k$ and let $G={\bf Aut}(A)$. Then we have 
$$G(k)/R=\{1\}.$$
\end{theorem}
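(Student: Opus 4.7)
The goal is to show every $\phi \in G(k) = Aut(A)$ is $R$-equivalent to $1$ in $G$. Since $RG(k)$ is a normal subgroup of $G(k)$, it suffices to write each such $\phi$ as a product of elements of $G(k)$ each individually $R$-equivalent to the identity.

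First I would apply Theorem~\ref{puretypea2} together with Theorem~\ref{purefixeda2} to decompose an arbitrary $\phi \in Aut(A)$ as a product of automorphisms of $A$, each fixing pointwise some $9$-dimensional subalgebra $D_+ \subset A$. Purity of $A$ as a first Tits construction guarantees that each such $D$ is a degree $3$ central associative division algebra over $k$, and the Proposition in Section~4 identifies the pointwise stabilizer ${\bf Aut}(A/D_+)$ with ${\bf SL}(1,D)$ as $k$-groups. Hence it is enough to show that the image of $SL(1,D)$ in $G(k)/R$ is trivial for every such $D$.

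The crux of the argument is thus to establish that ${\bf SL}(1,D)$ is $R$-trivial as a $k$-group, for $D$ a central division algebra of degree $3$ over $k$: any rational map $\mathbb{P}^1 \dashrightarrow {\bf SL}(1,D)$ defined over $k$ composes with the closed embedding ${\bf SL}(1,D) \hookrightarrow G$ to give a rational curve in $G$ witnessing $R$-equivalence in $G$ of the corresponding automorphism of $A$ with the identity. This $R$-triviality is classical in prime degree and is essentially encoded in the vanishing of the reduced Whitehead group $SK_1(D)$ supplied by Wang's theorem (Theorem~\ref{wang}); concretely, one may invoke the known $k$-rationality of ${\bf SL}(1,D)$ in prime degree, or produce the required rational curves by hand, writing each norm-one element as a product of commutators via the Wedderburn factorization recalled before Corollary~\ref{generalinnercor} and connecting each commutator $xyx^{-1}y^{-1}$ to $1$ through an affine line obtained by varying one of its arguments along a suitable rational family in $D^*$.

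The main obstacle is this last step, namely the explicit verification of $R$-triviality of ${\bf SL}(1,D)$ in degree $3$; the reduction to it is essentially formal once Theorems~\ref{puretypea2} and~\ref{purefixeda2} are in place, since $RG(k)$ is normal in $G(k)$ and the decomposition of $\phi$ into factors stabilized by $9$-dimensional subalgebras respects this normality. Combining the two ingredients yields $G(k) = RG(k)$, hence $G(k)/R = \{1\}$.
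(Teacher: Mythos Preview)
Your proposal is correct and follows essentially the same route as the paper: reduce via Theorems~\ref{puretypea2} and~\ref{purefixeda2} to automorphisms fixing a $9$-dimensional subalgebra $D_+$, identify the pointwise stabilizer with ${\bf SL}(1,D)$, and then use $R$-triviality of ${\bf SL}(1,D)$ for $D$ of degree $3$. The only cosmetic difference is that the paper invokes the known isomorphism $SK(1,D)\simeq SL(1,D)/R$ (citing \cite{Vos}, Chapter~6) together with Wang's theorem directly, whereas you sketch alternative justifications via $k$-rationality or explicit rational curves through commutators; either route is adequate, though the paper's citation is the most economical.
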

\begin{proof} We have shown in Theorem(\ref{puretypea2}) and Theorem(\ref{purefixeda2}) that every automorphism of a pure first construction Albert division algebra is a product of automorphism fixing $9$-dimensional subalgebras of $A$. Hence it suffices to prove that every automorphism of $A$ that fixes a $9$-dimensional subalgebra, can be connected to the identity automorphism by a rational map over $k$ as above. Let $S\subset A$ be a $9$-dimensional subalgebra. Then, since $A$ is pure first construction, $S=D_+$ for a suitable degree $3$ central division algebra over $k$. Now, the algebraic subgroup of $G={\bf Aut}(A)$ fixing $S$ pointwise is isomorphic to ${\bf SL}(1,D)$ over $k$. We know by Wang's theorem $SK(1,D)=\{1\}$ for degree $3$ central division algebras $D$ over $k$. Also, 
for degree $3$ central division algebras (see \cite{Vos}, Chapter 6),  
$$SK(1,D)\simeq SL(1,D)/R.$$
Hence it follows that $SL(1,D)/R=\{1\}$. From this the result follows.
\end{proof} 
\section{\bf Irregular automorphisms}\label{autos2}
In this section, we prove some results on automorphisms for general Albert division algebras. We will need some notions from the theory of algebraic groups. In this section, we will assume that characteristic of $k$ is not $2$.
\begin{definition}Let $G$ be a connected algebraic group defined over a field $k$ and let $g\in G$. The element $g$ is called {\bf regular} if $Z_G(g)$ the the centralizer of $g$ in $G$ has minimal dimension among all centralizers. If $g\in G$ is not regular, we call $g$ as {\bf irregular} (also called singular sometimes in the literature). 
\end{definition}  
\noindent
Let $A$ be an Albert division algebra arising from Tits first construction. Let $A=J(D,\mu)$ for a degree $3$ central division algebra and 
$\mu\in k^*$. Let $a\in D^*-k^*$ with $Nrd(a)=1$. Let $\psi_{a,1}$ be defined as before,
$$\psi_{a,1}(x,y,z)=(axa^{-1},ay,za^{-1}).$$
We have,
\begin{lemma} With $\phi=\psi_{a,1}$ with $a\neq 1$, we have $A^{\phi}=k(a)$. 
\end{lemma}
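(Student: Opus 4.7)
The plan is to compute $A^\phi$ component by component using the decomposition $A = D_0 \oplus D_1 \oplus D_2$. An element $(x,y,z)$ is fixed by $\phi=\psi_{a,1}$ precisely when
\[
axa^{-1}=x,\qquad ay=y,\qquad za^{-1}=z.
\]
I would handle the off-diagonal components first. The relation $ay=y$ rewrites as $(a-1)y=0$, and $za^{-1}=z$ rewrites as $z(a-1)=0$. Since $a\neq 1$ and $D$ is a division algebra, $a-1$ is a nonzero element of $D$ and therefore invertible, forcing $y=z=0$. So any fixed point must lie in $D_0$.

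Next I analyze the $D_0$-component. The condition $axa^{-1}=x$ says exactly that $x$ lies in the centralizer $Z_D(a)$. Here is the only place where real algebra enters: because $a\in D^*\setminus k^*$ and $D$ is a central division algebra of degree $3$ over $k$, the subfield $k(a)\subset D$ has $[k(a):k]$ dividing $3$ and is strictly larger than $k$, hence $k(a)$ is a maximal (cubic) subfield of $D$. By the double centralizer theorem (or directly: a maximal commutative subfield of a central simple algebra coincides with its own centralizer), $Z_D(a)=k(a)$. Consequently the $D_0$-component of a fixed point is an arbitrary element of $k(a)$.

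Putting the two steps together gives
\[
A^\phi=\{(x,0,0):x\in k(a)\},
\]
which, as a subalgebra of $A$, is canonically identified with $k(a)$. The main (and essentially only) obstacle is making sure the centralizer computation $Z_D(a)=k(a)$ is correctly justified; everything else is an immediate consequence of $D$ being a division algebra together with the hypothesis $a\neq 1$.
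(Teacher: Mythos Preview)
Your proof is correct and follows essentially the same route as the paper: write out the fixed-point condition componentwise, use $a\neq 1$ in the division algebra $D$ to kill the $D_1$ and $D_2$ components, and then identify the centralizer of $a$ in $D$ with the maximal subfield $k(a)$. Your justification of $Z_D(a)=k(a)$ via the degree-$3$ constraint and double centralizer is slightly more explicit than the paper's, but the argument is the same.
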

\begin{proof} We calculate:
$$\phi(x,y,z)=(x,y,z)\Longleftrightarrow (axa^{-1},ay,za^{-1})=(x,y,z)$$
$$\Longleftrightarrow ax=xa,~ay=y,~z=za.$$
Hence, since $k(a)$ is a maximal commutative subfield of $D$, it follows that, 
$x\in k(a)$. Also, $y=z=0$, since otherwise we would have $a=1$ using the other relations, a contradiction. Therefore $A^{\phi}=k(a)$.  
\end{proof}
\noindent
We will see that if we choose $a\in D^*-k^*$ that is noncyclic, i.e. $k(a)$ is not a Galois extension of $k$, then $\phi$ as above is regular. 
Before we arrive at this, we need a classification of subgroups of $G={\bf Aut}(A)$. 
Let $k$ be a field. Let $\mathcal{G}$ be a simply connected group over $\overline{k}$, an algebraic closure of $k$. We say a connected reductive group $G$ defined over $k$ is of {\bf type} $\mathcal{G}$ if the simply connected cover of the commutator subgroup $(G_{\overline{k}},G_{\overline{k}})$ is a product of groups, each isomorphic to $\mathcal{G}$. We need an 
improved version of a result proved in (\cite{PST}, Proposition 6.1). 
\begin{theorem}{\label{subgroups}}Let $A$ be an Albert division algebra over a field $k$. Let $H\subset G={\bf Aut}(A)$ be a proper connected reductive subgroup defined over $k$ and assume $H$ is not a torus. Then $H$ is of type $A_2$ or $D_4$.  
\end{theorem}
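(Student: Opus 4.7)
The strategy is to reduce the reductive classification to the semisimple case already established in PST, Proposition 6.1. Concretely, I would write $H = Z\cdot H_s$ as the almost direct product of its connected center $Z = Z(H)^{\circ}$ (a $k$-torus) and its derived subgroup $H_s = (H,H)$ (semisimple over $k$). By the definition of \emph{type} given immediately before the theorem, the type of $H$ depends only on $(H_{\overline{k}},H_{\overline{k}}) = (H_s)_{\overline{k}}$, so it suffices to show $H_s$ is of type $A_2$ or $D_4$. Since $H$ is not a torus, $H_s$ is nontrivial, so it provides a nontrivial semisimple $k$-subgroup of $G$ to which the simple classification can be applied.

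Next, decompose $H_s$ into its simple $k$-factors. Writing the simply connected cover $\widetilde{H_s}$ as an almost direct product $\widetilde{H_s} = \prod_i H_{s,i}$ of simple $k$-subgroups, each $H_{s,i}$ becomes, over $\overline{k}$, a product of isomorphic copies of a single absolutely simple group permuted transitively by $\Gamma_k = \mathrm{Gal}(\overline{k}/k)$. Invoking PST, Proposition 6.1 on each $H_{s,i}$ (via the usual Weil restriction reformulation so as to reduce to the absolutely simple case), one concludes each $H_{s,i}$ is of type $A_2$ or $D_4$.

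Finally, I would argue that mixed products cannot occur. If some factor $H_{s,i}$ is of type $D_4$, then because $D_4$ has maximal rank $4 = \mathrm{rank}(F_4)$ inside $G$, its centralizer in $G_{\overline{k}}$ is contained in a maximal torus of $G_{\overline{k}}$ and is therefore finite (equal to the center of that $D_4$); this forces every other factor in the product to be trivial, so $H_s$ is simple of type $D_4$. Otherwise every factor is of type $A_2$, and $H_s$ is of type $A_2$ in the paper's sense. Combined with the first paragraph, this gives the claim.

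The main obstacle is the second step: extending PST, Proposition 6.1 to cover every simple $k$-factor of a reductive (rather than simple) $k$-subgroup of $G$. This is routine in principle but delicate because PST ultimately rests on a representation-theoretic analysis of how proper simple $k$-subgroups act on the trace-zero submodule $A_0$ of the Albert algebra, with the $k$-anisotropy of $G$ (forced by $A$ being a division algebra) as the key input that rules out simple types other than $A_2$ and $D_4$. Alternatively, one could bypass PST and argue directly: for $H_s$ reductive non-torus, examine the fixed-point subalgebra $A^{H_s}$, show that it either contains a cubic \'etale subalgebra (placing $H_s$ inside some $G_L$ of type $D_4$) or contains a $9$-dimensional subalgebra $D_+$ (placing $H_s$ inside $\mathbf{Aut}(A/D_+) \cong \mathbf{SL}(1,D)$, of type $A_2$), and then identify $H_s$ with the appropriate subgroup of $G_L$ or $\mathbf{SL}(1,D)$ using that $H_s$ is not a torus together with the $k$-anisotropy constraint.
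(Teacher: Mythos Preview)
Your reduction to the derived subgroup $H_s$ and the Weil-restriction decomposition of its simply connected cover is exactly how the paper begins, so the overall shape is the same. The genuine difference is in how the simple factors are handled. You propose to black-box PST, Proposition~6.1, on each simple $k$-factor and then use a centralizer/rank argument to forbid a $D_4$ factor from coexisting with anything else. The paper instead gives a self-contained argument that does not invoke PST at all: writing $\widetilde{[H,H]} \cong \prod_i R_{L_i/k} H_i$ with each $H_i$ absolutely almost simple, it first uses the key arithmetic input---that $G$ remains anisotropic over every extension of degree coprime to~$3$---together with the rank bound $\mathrm{rk}(G)=4$ to show that any factor not of type $A_1$, $A_2$, or $D_4$ would force $G$ to become isotropic over a $2$-power extension. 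It then eliminates $A_1$ by a separate proposition (proved immediately after) showing that every $k$-torus in $G$ has rank $2$ or $4$: an $A_1$ factor would have to sit over a cubic extension $L_1/k$, producing a rank-$3$ $k$-torus in $G$, which is impossible.

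Your route is cleaner if one is willing to cite PST, but note that you are using PST for exactly the step the paper is \emph{improving}; the paper is deliberately reproving (and slightly generalizing) that proposition rather than quoting it. Your centralizer argument for the $D_4$ case is correct but unnecessary: a plain rank count already forces a $D_4$ factor to be the only one. The paper's approach has the advantage of being self-contained and of isolating the new ingredient (the torus-rank proposition), which is used again later in the section. Your fixed-point-subalgebra alternative is plausible in spirit but is not the route taken here and would require substantially more work to make rigorous.
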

\begin{proof}Let $H$ be a proper connected reductive subgroup of $G$, defined over $k$ and assume $T$ is not a torus. We then have the simply connected cover of $[H,H]$ is isomorphic to $\prod_{i=1}^r R_{L_i/k}H_i$ for some finite extensions $L_i/k$ and absolute almost simple groups $H_i$ defined over $L_i$. Suppose 
some $H_i$ is not of type $A_1,A_2$ or $D_4$. Then, since the (absolute) rank of $G$ (dimension of a maximal torus) is $4$, we must have $[L_i:k]\leq 2$ and $H_i$ becomes isotropic over an extension of degree $2^l$ for some $l$. But then $G$ itself becomes isotropic over such an extension, contradicting the fact that $G$ is anisotropic over $k$ and remains anisotropic over any finite extension of degree coprime to $3$. Hence all $H$ must be of type $A_1,A_2$ or $D_4$. Again, by rank argument, if say $H_1$ is of type $A_1$ then $[L_1:k]=3$ and $r=1$. In this case, 
for a maximal torus $S\subset H_1$ defined over $L_1$, $R_{L_1/k}(S)\subset G$ is a rank $3$ torus in $G$ defined over $k$. By the following proposition, this can not happen. Hence we are done.

\end{proof}  
\begin{proposition}Let $A$ be an Albert division algebra over $k$ and $G={\bf Aut}(A)$. Let $S\subset G$ be a torus defined over $k$. 
Then $S$ has rank $2$ or $4$.
\end{proposition}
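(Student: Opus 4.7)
My plan is to pin down $\dim_k A^S$ from the corollary on fixed-point subalgebras, translate this into an ambient $k$-subgroup containing $S$, and finally rule out the forbidden ranks $1$ and $3$ by combining a base-change anisotropy argument with a semisimplicity observation on the character lattice of a maximal torus.

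First I would invoke the corollary immediately preceding the proposition, which gives $\dim_k A^S\ge 3$ for any nontrivial $k$-torus $S\subset G$. Since $A$ is a Jordan division algebra, $A^S$ is itself a unital Jordan division subalgebra; the classification of finite-dimensional Jordan division algebras of degree at most $3$ (see Ch.~IX of \cite{J1} or \cite{KMRT}) forces $\dim_k A^S\in\{1,3,9,27\}$, so in fact $\dim_k A^S\in\{3,9,27\}$. The case $27$ trivially forces $S=\{1\}$. If $\dim A^S=9$, then $A^S$ is of the form $D_+$ or $\mathcal H(B',\tau)$ and $S\subset G_{A^S}=\mathbf{Aut}(A/A^S)$, a $k$-anisotropic group of type $A_2$ (rank $2$), so $\mathrm{rk}\,S\in\{1,2\}$. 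If $\dim A^S=3$, then $A^S=L$ is a cubic subfield and $S\subset G_L$, of type $D_4$ and rank $4$, so $\mathrm{rk}\,S\in\{1,2,3,4\}$. Everything reduces to excluding ranks $1$ and $3$.

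Rank $1$ is ruled out by a base-change argument. A nontrivial rank-$1$ $k$-torus inside the anisotropic group $G$ is itself $k$-anisotropic, hence isomorphic to $R^{(1)}_{K/k}(\mathbb G_m)$ for some quadratic étale $K/k$. Base-changing to $K$ then yields a split $\mathbb G_m\subset G_K$; but $\mathbf{Aut}(A)$ remains anisotropic over every extension of $k$ of degree coprime to $3$ (recorded in Section~2), and $[K:k]=2$ is coprime to $3$, a contradiction.

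For rank $3$ I would not argue directly but instead reduce to the rank-$1$ case via semisimplicity. Fix a maximal $k$-torus $T\subset G$ (rank $4$), let $\Gamma=\mathrm{Gal}(\bar k/k)$ act on $X^*(T)$ through its finite image, and consider the rational character module $V:=X^*(T)\otimes_{\mathbb Z}\mathbb Q$. This is a $4$-dimensional $\mathbb Q\Gamma$-module, semisimple by Maschke's theorem. Under the bijection between $k$-subtori of $T$ and saturated $\Gamma$-sublattices of $X^*(T)$, a rank-$r$ subtorus corresponds to a codimension-$r$ $\Gamma$-subrepresentation of $V$. By semisimplicity, $V$ admits a one-dimensional $\Gamma$-subrepresentation if and only if it admits a three-dimensional one, since these are complementary direct summands. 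Hence $T$ contains a rank-$3$ $k$-subtorus if and only if it contains a rank-$1$ one, and since every $k$-subtorus of $G$ lies in some maximal $k$-torus, the previous paragraph excludes rank $3$ everywhere. The conceptual crux of the proof is precisely this last trade, which turns the a priori delicate rank-$3$ case into the immediate rank-$1$ case via $\mathbb Q$-semisimplicity of the character module; without it one would have to engage in a much more involved weight analysis inside the $24$-dimensional $G_L$-module $A/L$, which is where the main obstacle would otherwise lie.
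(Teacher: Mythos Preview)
Your argument is correct, and your rank-$1$ exclusion is word-for-word the paper's. The opening detour through $A^S$ is harmless but entirely unnecessary: since $G$ has rank $4$, any $k$-subtorus has rank at most $4$, so the task is automatically to rule out ranks $1$ and $3$; you do not need the fixed-point corollary or the classification of Jordan division subalgebras for this. (Incidentally, that corollary is in Section~4, not ``immediately preceding'' this proposition, which sits in Section~6.)

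For rank $3$ the paper proceeds slightly differently and more directly. Given a rank-$3$ $k$-subtorus $S$, the paper embeds $S$ in a maximal $k$-torus $T$ and looks at the \emph{quotient} $M=T/S$, a rank-$1$ $k$-torus. Then $M$ splits over a quadratic extension $K/k$, so $M$ acquires a nontrivial $K$-character, which pulls back along $T\twoheadrightarrow M$ to a nontrivial $K$-character of $T$; hence $T$, and therefore $G$, is $K$-isotropic, contradicting anisotropy over degree-prime-to-$3$ extensions. Your route instead applies Maschke to $X^*(T)\otimes\mathbb{Q}$ to trade the codimension-$3$ invariant subspace (coming from the rank-$3$ subtorus) for a complementary codimension-$1$ one, thereby producing a rank-$1$ $k$-\emph{sub}torus of $T$, and then you invoke the rank-$1$ case. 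Both arguments exploit the same phenomenon---a rank-$1$ piece of $T$ splitting quadratically---but the paper uses the quotient directly and avoids the Maschke step, while your version has the aesthetic merit of literally reducing to the already-settled rank-$1$ case.
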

\begin{proof}Let $S\subset G$ be a torus defined over $k$. First suppose $S$ is of rank $1$. Then since $G$ is semisimple and anisotropic over $k$, 
$S$ itself must be anisotropic over $k$. Therefore $S=R_{K/k}^{(1)}(\mathbb{G}_m)$ for some quadratic extension $K/k$ (see Example 6, 4.9,\cite{Vos}). 
But then $S$ splits over $K$ and hence $G$ becomes isotropic over $K$, a contradiction to the fact that $G$ remains anisotropic over any extension with degree coprime to $3$, or equivalently, $A$ remains a division algebra over such extensions. Hence $S$ can not have rank $1$. 
Next, suppose $S$ has rank $3$. Let $T$ be a maximal torus defined over $k$ such that $S\subset T\subset G$ over $k$. The quotient $T/S$ is a rank $1$ torus defined over $k$ and we have the exact sequence of tori over $k$,
$$1\longrightarrow S\longrightarrow T\longrightarrow M\longrightarrow 1,$$ 
with $M=T/S$. But then $M$ splits over a quadratic extension $K/k$, hence $M$ has a 
nontrivial character defined over $K$ and since $M=T/S$, this gives a nontrivial character of $T$ defined over $K$. Hence $T$ becomes isotropic over $K$ and hence $G$ becomes isotropic over $K$, a contradiction. Hence rank of $S$ can not be $3$. Therefore $S$ has rank $2$ or $4$.  
\end{proof}
\noindent
{\bf Remark :}Since $G$ has $k$-subgroups of type $A_2$, there are rank $2$ tori in $G$ defined over $k$. For example, if $A=J(D,\mu)$, one has a $k$-rational embedding of ${\bf SL}(1,D)$ in ${\bf Aut}(A)$ and if $A=J(B,\sigma,u,\mu)$ then there is a $k$-embedding of ${\bf SU}(B,\sigma)$ in ${\bf Aut}(A)$ and both these groups are of type $A_2$.  
\begin{corollary}Let $G$ be as in the proposition. Let $H$ be a connected reductive $k$-subgroup of $G$ of maximal absolute rank and not a torus. Then $H$ is of type $A_2$ or $D_4$. 
\end{corollary}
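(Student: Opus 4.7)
The plan is to deduce the corollary directly from Theorem \ref{subgroups}. The hypotheses of the corollary---$H$ connected reductive over $k$ and not a torus---are precisely those of the theorem, provided we read the corollary as implicitly requiring $H\neq G$ (since $G$ itself is of type $F_4$ and is excluded from the conclusion). Applying the theorem then immediately yields that $H$ is of type $A_2$ or $D_4$, and nothing further is required.

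I would then record two remarks for completeness. First, the maximal-absolute-rank hypothesis is compatible with both allowed types: a subgroup of type $D_4$ is semisimple of absolute rank $4$, matching the rank of the ambient $F_4$; a subgroup of type $A_2$ is semisimple of rank $2$ and realises maximal absolute rank $4$ by carrying a central $k$-torus of rank $2$. Examples of both situations arise in $G$, the first via $\mathbf{Aut}(A/L)$ for a cubic \'etale subalgebra $L\subset A$, the second via the embedded ${\bf SL}(1,D)$ (for $D_+\subset A$) enlarged by its commuting rank-$2$ toral centraliser.

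Should one prefer a self-contained derivation rather than appealing to the theorem, the plan would be to rerun its classification argument: decompose the simply connected cover of $[H,H]$ as $\prod_{i}R_{L_i/k}(H_i)$ with each $H_i$ absolutely almost simple over a finite extension $L_i/k$, use the rank bound imposed by $\operatorname{rank}(F_4)=4$, and invoke the fact that $G$ (equivalently, $A$) remains anisotropic over every finite extension of $k$ of degree coprime to $3$ in order to exclude every Dynkin type for the $H_i$ other than $A_1$, $A_2$, $D_4$. The only nontrivial step---and thus the main obstacle---is ruling out type $A_1$: such a factor would force $[L_1\colon k]=3$ and produce a rank-$3$ $k$-torus in $G$ of the form $R_{L_1/k}(S)$, which is forbidden by the preceding proposition on $k$-tori in $G$. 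With $A_1$ eliminated, only $A_2$ and $D_4$ remain, and the corollary follows.
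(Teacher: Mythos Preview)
Your proposal is correct and matches the paper's approach: the paper supplies no proof of this corollary, treating it as immediate from Theorem~\ref{subgroups}, and that is exactly what you do. One minor quibble with your side remark: the rank-$4$ reductive $k$-subgroup of type $A_2$ that actually arises in $G$ is $A_2\times A_2$ (as invoked in the proof of Theorem~\ref{irreg}), not $\mathbf{SL}(1,D)$ enlarged by a commuting rank-$2$ torus---over $\overline{k}$ the centraliser of an $A_2$ in $F_4$ is another $A_2$, not a torus---but this does not affect the argument.
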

\noindent
We are now in a position to prove 
\begin{theorem}{\label{irreg}}Let $A$ be an Albert division algebra over $k$ and $G={\bf Aut}(A)$. Let $\phi\in G(k)$ be an irregular automorphism of $A$. Then $\phi$ stabilizes a $9$-dimensional subalgebra. 
\end{theorem}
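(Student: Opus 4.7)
The plan is to study the identity component of the centralizer, $Z := Z_G(\phi)^\circ$, and apply the classification of connected reductive $k$-subgroups of $G$ given in Theorem \ref{subgroups}. Since $G = {\bf Aut}(A)$ is $k$-anisotropic and $k$ may be assumed perfect, every element of $G(k)$ is semisimple (as used in the proof of the corollary to Theorem \ref{etalefixedpoint} above), so $\phi$ lies in a maximal torus of $Z$ and hence $\mathrm{rank}(Z) = \mathrm{rank}(G) = 4$. The irregularity of $\phi$ forces $\dim Z > 4$, so $Z$ is not a torus; Theorem \ref{subgroups} then constrains $Z$ to be of type $A_2$ or $D_4$. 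In both cases the key observation is that $\phi$ itself lies in the center of $Z$ (since every element of $Z \subseteq Z_G(\phi)$ commutes with $\phi$), so $\phi$ centralizes every subgroup of $Z$ and consequently preserves the pointwise-fixed subalgebra in $A$ of any such subgroup.

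Suppose first that $Z$ is of type $D_4$. Having rank equal to that of $G$, $Z$ is a maximal-rank $D_4$-subgroup of the $F_4$-type group $G$; by the corollary in Section 4 identifying the $k$-subgroups of the form ${\bf Aut}(A/L)$ (for cubic \'etale $L$) as the $D_4$-subgroups of $G$, we must have $Z = G_M$ for some cubic \'etale subalgebra $M \subseteq A$. I then use (\cite{PR2}, Cor.\ 4.6) to embed $M$ in a $9$-dimensional subalgebra $B \subseteq A$, of the form $D_+$ in the first-construction situation or $\mathcal{H}(B,\sigma)$ in the second. By the proposition in Section 4 identifying ${\bf Aut}(A/D_+) \cong {\bf SL}(1,D)$ (and its second-construction analogue ${\bf Aut}(A/\mathcal{H}(B,\sigma)) \cong {\bf SU}(B,\sigma)$), the pointwise stabilizer $S := {\bf Aut}(A/B)$ is a $k$-subgroup of $G$ of type $A_2$, and $S \subseteq G_M = Z$ because $S$ fixes $B \supseteq M$ pointwise. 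Since $\phi$ commutes with every $s \in S$, for any $x \in B = A^S$ we compute $s(\phi(x)) = \phi(s(x)) = \phi(x)$, so $\phi(B) \subseteq B$; by invertibility, $\phi(B) = B$, as required.

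Suppose now $Z$ is of type $A_2$. Then $Z' := [Z,Z]$ is an almost-simple $k$-subgroup of $G$ of type $A_2$, and by the classification of such subgroups (namely ${\bf SL}(1,D)$ or ${\bf SU}(B,\sigma)$, each the pointwise stabilizer of a $9$-dimensional subalgebra of $A$), $Z'$ coincides with ${\bf Aut}(A/B_0)$ for a unique $9$-dimensional subalgebra $B_0 = A^{Z'} \subseteq A$. Since $\phi$ centralizes $Z'$, the same calculation as above yields $\phi(B_0) = B_0$. The principal technical obstacle I foresee is the subcase in which $Z'$ is not absolutely $k$-simple but of restriction-of-scalars form $R_{K/k}(H_1)$ for a quadratic extension $K/k$ and an absolutely almost-simple $K$-group $H_1$ of type $A_2$ (a subcase not excluded by the proof of Theorem \ref{subgroups}, where $[L_i : k] \leq 2$ is permitted for $A_2$-factors). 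In that situation $Z'$ is not the pointwise stabilizer of any single $9$-dimensional $k$-subalgebra of $A$, and to conclude one must either rule out this configuration as the centralizer of an irregular $k$-rational element of $G$ (e.g., by exploiting $\phi \in Z(Z')(k)$ together with the trivial center of $G = F_4$), or perform a Galois-descent argument on the pair of $K$-conjugate $9$-dimensional $K$-subalgebras of $A \otimes_k K$ fixed by $Z' \otimes_k K$ to extract a $\phi$-stable $9$-dimensional $k$-subalgebra of $A$.
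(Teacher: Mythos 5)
Your overall frame (pass to $Z_G(\phi)$, note it is connected reductive and non-toral because $\phi$ is irregular semisimple, and invoke Theorem \ref{subgroups}) is the paper's, and your $D_4$ branch is essentially the paper's first case: there the paper observes that $\phi$ is then central in $G_L$ for a cubic subfield $L\subset A^\phi$, picks a nontrivial $\theta\in G_B(k)$ for a $9$-dimensional $B\supset L$, and concludes $\phi(B)=\phi(A^\theta)=A^\theta=B$; your version with $S={\bf Aut}(A/B)\subset Z$ is the same computation (though note the Section 4 corollary only gives the implication ``$L$ cubic \'etale $\Rightarrow G_L$ is of type $D_4$,'' not the converse you quote, so the identification $Z=G_M$ needs its own justification, e.g.\ via a common maximal torus).

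The genuine gap is in your $A_2$ branch. The claim that $Z'=[Z,Z]$ ``coincides with ${\bf Aut}(A/B_0)$ for a unique $9$-dimensional subalgebra $B_0=A^{Z'}$'' is unsupported and is false in the relevant configurations. Since $Z$ has rank $4$ and is of type $A_2$, the generic possibility is that $Z$ is semisimple of type $A_2\times A_2$, in which case $Z'=Z$ and $A^{Z'}$ is only $k\cdot 1$: in the model $A=J(D,\mu)$ the rank-$4$ group of type $A_2\times A_2$ is $\{\psi_{a,b}\}$, and already its single factor $\{\psi_{a,1}:a\in SL(1,D)\}$ — an absolutely almost simple $k$-group of type $A_2$ — has fixed subalgebra $k\cdot 1$, not a $9$-dimensional subalgebra. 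So ``almost simple of type $A_2$'' does not imply ``pointwise stabilizer of a $9$-dimensional subalgebra,'' and the restriction-of-scalars subcase you flag is only a secondary symptom of this. The paper avoids any such classification: it notes $A_2\times A_2\nsubseteq D_4$ (no $8$-dimensional self-dual representation), so by density of rational points there is $\theta\in Z_G(\phi)(k)\setminus G_L$; the fixed algebra $A^\theta$ of this single element is a proper subalgebra of the division algebra $A$ containing a cubic subfield (Corollary \ref{fixedpointcor}), hence is either a cubic subfield $M\neq L$ or a $9$-dimensional subalgebra, and since $\phi$ commutes with $\theta$ it stabilizes $A^\theta$ — so the required $\phi$-stable $9$-dimensional subalgebra is $A^\theta$ itself or $\langle L,M\rangle$. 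You should replace your classification step by this argument (or, at minimum, prove that the long-root $A_2$ factor of $Z$ is defined over $k$ and is a pointwise stabilizer of a $9$-dimensional subalgebra, which is exactly the content you are currently assuming).
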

\begin{proof}Let $\phi\in G(k)$ be irregular. Let $L\subset A$ be a cubic subfield fixed pointwise by $\phi$. Then $\phi\in G_L$. Note that $G_L$ is a rank $4$ subgroup of $G$ defined over $k$. Consider the centralizer $Z_G(\phi)$. Since $G$ is simply connected, this is a connected reductive subgroup of $G$ defined over $k$ (see \cite{H}, Theorem 2.11) 
and is not a torus, since $\phi$ is irregular. Hence, by the corollary above, either $\phi$ is central in $G_L$ or $Z_G(\phi)$ is of type $A_2$. 
In the first case, choose a division subalgebra $B\subset A$ of dimension $9$ such that $L\subset B$ and let $1\neq\theta\in G_B(k)\subset G_L(k)$. Then $A^{\theta}=B$ and 
$\phi\theta=\theta\phi$. Hence, for any $x\in B$,
$$\phi(x)=\phi(\theta(x))=\theta(\phi(x)).$$
Hence $\phi(x)\in A^{\theta}=B$. Therefore $\phi(B)=B$.  
In the second case $Z_G(\phi)$ is isomorphic to $A_2\times A_2$. We may assume that $A^{\phi}=L$. Now we note that $A_2\times A_2\nsubseteq D_4$. This follows from the fact that $A_2\times A_2$ has no $8$-dimensional self-dual representation. Hence $Z_G(\phi)(k)\nsubseteq G_L$, using the fact that 
$k$-rational points are dense for a connected reductive subgroup. Let $\theta\in Z_G(\phi)(k)-G_L$. Then 
$A^{\theta}$ is either a cubic subfield $M\subset A$ or a $9$-dimensional subalgebra. In the first case, $\phi$ maps the $9$-dimensional subalgebra generated by $L$ and $M$ to itself. In the second case $\phi$ stabilizes $A^{\theta}$. 
\end{proof}
\noindent
{\bf Example :}Let $A=J(D,\mu)$ be an Albert division algebra and $p\in SL(1,D)$. Consider the automorphism $\mathcal{J}_p:A\longrightarrow A$ given by 
$\mathcal{J}_p(x,y,z)=(x,yp,p^{-1}z)$. Then $A^{\mathcal{J}_p}=D_0$. Clearly, in our earlier notation $H=\{\psi_{x,y}|x\in SL(1,D),~y\in k(p)\}\subset 
Z_G(\mathcal{J}_p)$ and $H\simeq SL(1,D)\times R_{k(p)/k}^{(1)}(\mathbb{G}_m)$ is not a torus. Hence $Z_G(\mathcal{J}_p)$ is not a torus and $\mathcal{J}_p$ is irregular.
The following proposition clarifies things about regular elements in $G(k)$. 
\begin{proposition}{\label{regular}}Let $A$ be an Albert division algebra. Let $\phi\in G(k)$ be such that $A^{\phi}=L$ is a cubic non-cyclic extension of $k$. Then $\phi$ is regular. 
\end{proposition}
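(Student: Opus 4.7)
The plan is to argue by contradiction. Assume $\phi$ is irregular; I will derive a contradiction from the hypothesis that $L = A^{\phi}$ is a non-cyclic cubic extension. Since $G$ is simply connected and anisotropic, $Z_G(\phi)$ is a connected reductive $k$-subgroup of rank $4$ that is not a torus. Any $\theta$ commuting with $\phi$ stabilizes $A^{\phi} = L$, giving a morphism $Z_G(\phi) \to \underline{\mathrm{Aut}}(L)$ to a finite \'{e}tale group scheme; since $Z_G(\phi)$ is connected, this morphism is trivial, so $Z_G(\phi) \subseteq G_L$. Combining this with the rank-$4$ classification of Theorem \ref{subgroups} and the dichotomy established in the proof of Theorem \ref{irreg}, one is reduced to showing both of the following cases are impossible: (i) $\phi$ is central in $G_L$, or (ii) $Z_G(\phi) \simeq A_2 \times A_2$.

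For the central case, I would pass to $\bar{k}$. There $G_L \otimes \bar{k} \cong \mathrm{Spin}(8)$ and its center $\mu_2 \times \mu_2$ acts on the $24$-dimensional orthogonal complement $W$ of $L \otimes \bar{k}$ in $A \otimes \bar{k}$ through the sum of the vector and the two half-spin representations, the three $8$-dimensional irreducibles being permuted by triality. Any nontrivial central element acts as $-1$ on at most one of these pieces and trivially on the other two, so its fixed subspace on $W$ has dimension at least $16$. Hence $\dim_k A^{\phi} \ge 3 + 16 = 19$, contradicting $\dim_k A^{\phi} = 3$.

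For the case $Z_G(\phi) \simeq A_2 \times A_2$, I would adapt the representation-theoretic argument of Theorem \ref{irreg}: $A_2 \times A_2$ has no $8$-dimensional self-dual irreducible representation, so it cannot embed in the $D_4$-group $G_L$. Consequently $Z_G(\phi)(k) \not\subseteq G_L(k)$, and density of $k$-points in the connected reductive group $Z_G(\phi)$ furnishes some $\theta \in Z_G(\phi)(k) \setminus G_L(k)$. Since $\theta$ commutes with $\phi$, we have $\theta(L) = \theta(A^{\phi}) = A^{\phi} = L$; since $\theta \notin G_L$, the restriction $\theta|_L$ is a nontrivial $k$-algebra automorphism of $L$. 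This contradicts $\mathrm{Aut}_k(L) = \{1\}$, which holds for any non-cyclic (hence non-Galois) cubic extension $L/k$.

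The main obstacle is tightening the dichotomy. A priori a connected reductive $k$-subgroup of $G_L$ of maximal rank with derived subgroup $A_2$ could have a $2$-dimensional central torus (an $A_2 \times T^2$ Levi in the $D_4$-group $G_L$), a possibility not obviously covered by either alternative above. Ruling out such an intermediate centralizer — most plausibly by exploiting the $k$-anisotropy of $G_L$ (which forbids proper $k$-Levi subgroups in the parabolic sense) together with the absence of rank-$3$ $k$-tori in $G$ proved earlier, and then re-inserting the non-cyclicity of $L$ as in case (ii) — is where the real work of the proof lies.
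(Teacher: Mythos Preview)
Your overall strategy matches the paper's --- argue by contradiction, use that $Z_G(\phi)$ is connected reductive of rank $4$ and not a torus, and invoke the classification of Theorem~\ref{subgroups} --- but there are two concrete problems and one genuine difference from the paper worth noting.

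\textbf{Logical inconsistency in case (ii).} Your connectedness argument showing $Z_G(\phi)\subseteq G_L$ is valid (and is \emph{not} in the paper). But having established it, your case (ii) treatment contradicts it: you conclude ``$Z_G(\phi)(k)\not\subseteq G_L(k)$'' and then run the $\theta$-argument. If you already know $Z_G(\phi)\subseteq G_L$, then $Z_G(\phi)\cong A_2\times A_2$ is impossible \emph{immediately} (since $A_2\times A_2\not\subseteq D_4$), and your $\theta|_L$ argument is never reached. In particular, non-cyclicity of $L$ is then used nowhere except in the unresolved $A_2\times T^2$ case --- so your acknowledged ``main obstacle'' is not a side issue but the entire content of the proof in your setup.

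\textbf{Error in case (i).} A nontrivial central element of $\mathrm{Spin}(8)$ acts as $-1$ on \emph{two} of the three $8$-dimensional pieces and trivially on one (the three elements of order $2$ in $\mu_2\times\mu_2$ are the kernels of the three representations). So the fixed space on $W$ has dimension $8$, giving $\dim A^\phi=11$, not $\geq 19$. The conclusion $\dim A^\phi>3$ survives, but the paper's argument here is both correct and simpler: $\phi$ central in a form of $\mathrm{Spin}(8)$ forces $\phi^2=1$, hence $\phi$ has a nonzero $(-1)$-eigenvector $v$, and then $N(v)=N(\phi(v))=N(-v)=-N(v)$ gives $N(v)=0$, contradicting that $A$ is a division algebra.

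\textbf{How the paper organizes the argument.} The paper does \emph{not} first force $Z_G(\phi)\subseteq G_L$. It treats the dichotomy $Z_G(\phi)\subseteq G_L$ versus $Z_G(\phi)\not\subseteq G_L$ directly: in the first branch it argues (via $A_2\times A_2\not\subseteq D_4$) that $\phi$ is central in $G_L$ and reaches the norm contradiction; hence one is in the second branch, and \emph{there} the $\theta$-argument produces a nontrivial $k$-automorphism of $L$, contradicting non-cyclicity. So in the paper, non-cyclicity enters only at the very last step, via $\theta\in Z_G(\phi)(k)\setminus G_L$. Your connectedness step short-circuits this structure and leaves you with exactly the $A_2\times T^2$ case to eliminate --- which is indeed where the non-cyclicity has to be used, but which you have not carried out.
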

\begin{proof}Suppose $A^{\phi}=L$ is a cubic extension of $k$ and $\phi$ is irregular. Then $Z_G(\phi)$ is a connected reductive subgroup which is not a torus. 
Suppose $Z_G(\phi)\subset G_L$. By Theorem \ref{subgroups}, it follows that $Z_G(\phi)$ is a $k$-form of $A_2\times A_2$ or $D_4$. But 
$A_2\times A_2\nsubseteq D_4$, hence $Z_G(\phi)=G_L$ and $\phi $ is central in $G_L$. Recall that $G_L$ is a form of $Spin(8)$. Therefore $\phi$ is a $2$-torsion element, i.e., $\phi^2=1$ and $\phi\neq 1$ since $Dim(A^{\phi})=3$. This implies that there is a nonzero element $v\in A$ such that $\phi(v)=-v$. Since $\phi$ is a norm isometry for the norm on $A$, we have, 
$$N(\phi(v))=N(v)=N(-v)=-N(v),$$
since $N$ is a cubic form and $char(k)\neq 2$. Therefore $N(v)=0$ and hence $A$ is reduced, a contradiction. Hence $Z_G(\phi)\nsubseteq G_L$. Let $\theta\in Z_G(\phi)(k)-G_L$. Then 
$\theta^{-1}\phi\theta=\phi$ and hence $\phi(\theta(x))=\theta(x)$ for all $x\in L$. Since $A^{\phi}=L$, it follows that $\theta(L)=L$. 
Also $\theta\notin G_L$, hence $\theta|_{L}\neq 1$ and $L/k$ must be Galois. Hence if $L/k$ is non-cyclic, $\phi$ must be regular. This proves the proposition.
\end{proof}
\noindent
{\bf Example :} Let $A=J(D,\mu)$ be an Albert division algebra. Let
$a\in SL(1,D)$ be such that $k(a)$ is cyclic. Consider the
automorphism $\phi=\psi_{a,1}:A\longrightarrow A$. Then
$\psi_{a,1}(x,y,z)=(axa^{-1},ay,za^{-1})$ and $A^{\phi}=k(a)$. It is
easily checked that $H=\{\psi_{x,b}|x\in k(a),~b\in
D^*,~Nrd(b)=Nrd(x)=1\}\subset Z_G(\phi)$ and $H\simeq
R_{k(a)/k}^{(1)}(\mathbb{G}_m)\times SL(1,D)$. Hence $H$ is bigger
than a maximal torus and therefore $\phi$ is irregular.  
\vskip3mm
\noindent
{\bf Some general results :} We will now prove some general results for decomposition of automorphisms. We need
\begin{lemma} Let $G$ be a connected algebraic group, defined and anisotropic over a perfect infinite field $k$. Then $G$ is reductive.
\end{lemma}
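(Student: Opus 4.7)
The plan is to argue by contraposition: assume $G$ is not reductive, and exhibit a nontrivial $k$-split closed $k$-subgroup of $G$, contradicting the anisotropy hypothesis. The main tool is the unipotent radical $R_u(G)$, used together with two structural facts that genuinely require $k$ to be perfect.

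First I would observe that, because $k$ is perfect, $R_u(G_{\overline{k}})$ is stable under $\mathrm{Gal}(\overline{k}/k)$ and therefore descends to a closed connected normal smooth unipotent $k$-subgroup $R_u(G) \subset G$. Non-reductivity of $G$ is then precisely the statement that $R_u(G) \neq 1$. Next I would invoke the standard structure theorem for smooth connected unipotent groups over a perfect field: any such $U$ admits a composition series $1 = U_0 \subsetneq U_1 \subsetneq \cdots \subsetneq U_n = U$ by closed $k$-subgroups whose successive quotients $U_i/U_{i-1}$ are isomorphic to $\mathbb{G}_a$ over $k$. Applied to $R_u(G)$, the bottom nontrivial step $U_1$ provides a closed $k$-subgroup $H \subset G$ with $H \simeq \mathbb{G}_a$ over $k$.

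Since $k$ is infinite, $H(k) = k$ is Zariski dense in $H$, so $H$ is a nontrivial $k$-split smooth connected $k$-subgroup of $G$, contradicting the $k$-anisotropy of $G$. This forces $R_u(G) = 1$ and proves the lemma. The main delicate point is the descent of the unipotent radical from $\overline{k}$ to $k$, together with the production of a $k$-rational $\mathbb{G}_a$-subgroup inside it; both can fail over imperfect fields (where pseudo-reductive phenomena enter), which is why perfectness is used in an essential way, whereas infiniteness of $k$ serves only to upgrade the $\mathbb{G}_a$ subgroup to a genuine witness against anisotropy on the level of $k$-rational points.
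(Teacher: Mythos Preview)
Your argument is correct and is close in spirit to the paper's, but the final step differs. The paper argues at the level of $k$-points: it cites Richardson to conclude that an anisotropic group over a perfect field has no nontrivial unipotent $k$-points, so $R_u(G)(k)=\{1\}$, and then invokes Zariski density of $k$-points (this is where infiniteness of $k$ enters) to deduce $R_u(G)=\{1\}$. You instead work structurally, using the composition series of a smooth connected unipotent group over a perfect field to locate an explicit $\mathbb{G}_a$ inside $R_u(G)$ and contradict anisotropy directly. Your version is more self-contained, since it does not outsource the key implication to an external reference, and it makes transparent exactly where perfectness is used (descent of $R_u$ and the filtration by $\mathbb{G}_a$'s); the paper's version is terser but leans on Richardson's result. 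One small remark: once you have $H\simeq\mathbb{G}_a$ over $k$, this is already a $k$-split subgroup, so the appeal to density of $H(k)$ is not really needed to contradict anisotropy in the group-theoretic sense; your closing comment correctly identifies infiniteness as playing only this auxiliary role.
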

\begin{proof} Since $G$ is anisotropic over $k$, $G(k)$ has no nontrivial unipotent elements (see \cite{R}). Hence $R_u(G)(k)=\{1\}$. But then the density of $k$-rational points implies that $R_u(G)=\{1\}$. Therefore $G$ is reductive. 
\end{proof}
\begin{proposition}{\label{d1d2}}Let $A$ be an Albert division algebra over a perfect infinite field $k$ and let $G={\bf Aut}(A)$. Let $L$ be a cubic subfield in $A$ and $H=G_L$. Then there exist $9$-dimensional subalgebras $D_1$ and $D_2$ of $A$ such that $D_1\cap D_2=L$ and $H=G_1.G_2\cdots.G_r$, where $G_i={\bf Aut}(A/D_1)$ 
or $G_i={\bf Aut}(A/D_2)$. 
\end{proposition}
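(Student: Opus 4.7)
The plan is to realise $H=G_L$ as the $k$-algebraic group generated by two subgroups of type $A_2$ of the form $G_{D_i}$, and then apply Theorem \ref{subgroups} to force the generated group to equal $H$.

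First I would choose $D_1$: by Cor.\ 4.6 of \cite{PR2}, the inclusion $L\hookrightarrow A$ extends to an embedding of a $9$-dimensional $k$-subalgebra $D_1\supset L$ into $A$. To produce $D_2$, I would argue that the normaliser $N_H(D_1)$ of $D_1$ in $H$ is a proper $k$-subgroup: the quotient $N_H(D_1)/G_{D_1}$ embeds into $\mathbf{Aut}(D_1)$, a $k$-group of dimension $8$ (whether $D_1=D_+$ or $D_1=\mathcal{H}(B,\tau)$), while $\dim G_{D_1}=8$ and $\dim H=28$. Since $k$ is perfect and infinite and $H$ is connected, $H(k)$ is Zariski dense in $H$, so I can pick $h\in H(k)$ with $h\notin N_H(D_1)$ and set $D_2:=h(D_1)$. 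This is a $k$-defined $9$-dimensional subalgebra of $A$, distinct from $D_1$ and containing $h(L)=L$. The intersection $D_1\cap D_2$ is a Jordan subalgebra of $D_1$ containing $L$; since $L$ is a maximal subfield of the simple $9$-dimensional algebra $D_1$, the only Jordan subalgebras of $D_1$ containing $L$ are $L$ and $D_1$ itself, so $D_1\cap D_2=L$.

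Let $K$ be the $k$-algebraic subgroup of $H$ generated by $G_{D_1}$ and $G_{D_2}=hG_{D_1}h^{-1}$. It is connected, being generated by two connected subgroups, and it is a closed $k$-subgroup of the anisotropic group $G$, hence anisotropic; by the lemma preceding the proposition, $K$ is reductive. Since $K$ contains the non-toral group $G_{D_1}$, Theorem \ref{subgroups} leaves only two possibilities: $K$ is of type $A_2$ or of type $D_4$. To rule out type $A_2$: if it held, the derived subgroup $[K,K]$ would have dimension $8$ and contain both $8$-dimensional semisimple groups $G_{D_1}$ and $G_{D_2}$, forcing $G_{D_1}=[K,K]=G_{D_2}$. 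A direct computation in the model $A\simeq J(D_i,\mu)$, using the action $p\cdot(x,y,z)=(x,yp,p^{-1}z)$ of $p\in \mathrm{SL}(1,D_i)$, yields $A^{G_{D_i}}=D_i$. Hence $G_{D_1}=G_{D_2}$ would force $D_1=D_2$, contradicting the construction. Consequently $K$ is of type $D_4$, so $[K,K]$ has dimension $28=\dim H$, and the inclusion $[K,K]\subseteq H$ between connected groups of equal dimension gives $K=H$. The presentation $H=G_1\cdots G_r$ with $G_i\in\{G_{D_1},G_{D_2}\}$ then follows because the image of the iterated multiplication morphism $G_{D_1}\times G_{D_2}\times\cdots\to H$ stabilises after finitely many factors to the generated subgroup $K=H$.

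The main obstacle is the exclusion of type $A_2$: it hinges on the identification $A^{G_{D_i}}=D_i$, which then allows one to recover $D_i$ from $G_{D_i}$ and contradict $D_1\neq D_2$. The rest of the argument reduces to density of $k$-rational points and a direct application of Theorem \ref{subgroups}.
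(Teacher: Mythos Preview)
Your argument follows the same skeleton as the paper's---generate a subgroup $K$ of $G_L$ by $G_{D_1}$ and $G_{D_2}$, observe it is connected reductive and nontoral, invoke Theorem~\ref{subgroups}, rule out type $A_2$, conclude $K=G_L$, and finish with the standard product decomposition (the paper cites \cite{Spr}, Cor.~2.2.7)---but the two proofs differ in two places, and in one of them your version has a small gap.

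\emph{Construction of $D_2$.} The paper builds $D_1,D_2$ directly by orthogonality: pick $0\neq x_1\in L^\perp$, set $D_1=\langle L,x_1\rangle$; then pick $0\neq x_2\in D_1^\perp$ and set $D_2=\langle L,x_2\rangle$. This yields $D_1\cap D_2=L$ immediately, and the paper also notes that $D_1$ and $D_2$ together generate $A$ (used later to show $G_{D_1}\cap G_{D_2}=\{1\}$). Your route via the proper inclusion $N_H(D_1)\subsetneq H$ and density of $H(k)$ is equally valid and perhaps more structural, but the orthogonal-complement construction is shorter and avoids the dimension count on the normaliser.

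\emph{Exclusion of type $A_2$.} Here is the gap. In the paper's convention (stated just before Theorem~\ref{subgroups}), ``type $A_2$'' means the simply connected cover of $[K,K]$ is a \emph{product} of copies of $A_2$; given the rank bound inside $G$, this allows either a single $A_2$ or $A_2\times A_2$. Your claim that $[K,K]$ has dimension $8$ only treats the single-$A_2$ case. The paper disposes of $A_2\times A_2$ by the fact (already used in the proof of Theorem~\ref{irreg}) that $A_2\times A_2$ does not embed in $D_4$, while $K\subset G_L$ sits inside a group of type $D_4$. Once you add this one line, your fixed-point argument $A^{G_{D_i}}=D_i$ (cf.\ Example~(i) in Section~4) gives a clean alternative way to rule out the single-$A_2$ case, in place of the paper's implicit observation that $K$ contains each $G_i$ properly.
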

\begin{proof}Let $0\neq x_1\in L^{\perp}$ and $D_1$ be the subalgebra generated by $L$ and $x_1$. Let $0\neq x_2\in D_1^{\perp}$ and $D_2$ be the subalgebra generated by $L$ and $x_2$. Then $D_1\cap D_2=L$ and $Dim(D_i)=9$. Moreover, since $D_1\neq D_2$, the subalgebra generated by $D_1$ and $D_2$ equals $A$, since this subalgebra has dimension at least $10$. 
Let $G_i$ be as in the hypothesis. Then $G_i\subset G_L$ and the
subgroup $H$ of $G_L$ generated by $G_1$ and $G_2$ is a closed
connected subgroup defined over $k$. Since $G_L$ is anisotropic, $H$
itself is a $k$-anisotropic nontoral subgroup and by the lemma, $H$ is reductive. Hence $H$ is connected reductive non-toral and contains $G_i$
properly. Therefore, by Theorem (\ref{subgroups}), $H$ must be of type $A_2\times A_2$ or
$D_4$. But $A_2\times A_2\nsubseteq D_4$, hence $H$ must be of type
$D_4$ and therefore $H=G_L$. That $H=G_1\cdots G_r$ with $G_i={\bf Aut}(A/D_1)$ or
${\bf Aut}(A/D_2)$ follows from a standard theorem in algebraic group theory (see for example \cite{Spr}, Corollary 2.2.7).
\end{proof}
We can define the length of and element of $G_L$ as the length of the smallest expression of the element as a product of elements from $G_i$. We then have,
\begin{corollary}Let $\phi\in G_L(k)$ be of length at most $2$. Then $\phi=\phi_1\phi_2$ with $\phi_i\in G_i(k)$ with $i=1$ or $2$.
\end{corollary}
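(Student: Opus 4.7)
The plan is a Galois descent argument based on the triviality of the intersection $G_1\cap G_2$ as algebraic groups. First I would verify that $G_1\cap G_2=\{1\}$: any element of $G_1(\overline{k})\cap G_2(\overline{k})$ fixes both $D_1$ and $D_2$ pointwise, hence fixes the subalgebra they generate. By the proof of Proposition~\ref{d1d2} that subalgebra is all of $A$, forcing the element to be the identity.

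Next, given $\phi\in G_L(k)$ of length at most $2$, I would invoke the definition of length to write $\phi=\phi_1\phi_2$ with $\phi_\nu\in G_{j_\nu}(\overline{k})$ for some indices $j_1,j_2\in\{1,2\}$ (the cases of length $0$ or $1$ are subsumed by allowing a trivial factor). In the easy case $j_1=j_2=j$, one has $\phi\in G_j(\overline{k})\cap G_L(k)=G_j(k)$ because $G_j$ is defined over $k$, and the factorization $\phi=\phi\cdot 1$ meets the requirement. The substantive case is $j_1\neq j_2$; say $\phi_1\in G_1(\overline{k})$ and $\phi_2\in G_2(\overline{k})$. For each $\sigma\in\mathrm{Gal}(\overline{k}/k)$, applying $\sigma$ to $\phi=\phi_1\phi_2$ and using $\sigma(\phi)=\phi$ yields
$$\phi_1^{-1}\sigma(\phi_1)=\phi_2\sigma(\phi_2)^{-1}.$$
The left-hand side lies in $G_1(\overline{k})$ and the right-hand side in $G_2(\overline{k})$, so by the first step both sides equal the identity. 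Hence each $\phi_i$ is Galois-invariant, and since $G_i$ is defined over $k$, we conclude $\phi_i\in G_i(k)$.

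I do not foresee any serious obstacle. The only delicate point is the verification that $G_1\cap G_2=\{1\}$, and this is immediate from the already-established fact that $D_1$ and $D_2$ generate $A$ as a $k$-algebra; everything else is formal Galois descent combined with the fact that the subgroups $G_i$ are defined over $k$.
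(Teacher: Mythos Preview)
Your proposal is correct and follows essentially the same route as the paper: both arguments establish $G_1\cap G_2=\{1\}$ from the fact that $D_1$ and $D_2$ generate $A$, and then use the Galois-descent computation $\phi_1^{-1}\sigma(\phi_1)=\phi_2\sigma(\phi_2)^{-1}\in G_1\cap G_2$ to force each factor to be $k$-rational. Your treatment is in fact slightly more careful than the paper's, since you explicitly dispose of the degenerate case $j_1=j_2$ before running the descent.
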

\begin{proof}Let $\phi=\in G_L(k)$ with length at most $2$. If length of $\phi$ is $1$, then clearly $\phi$ belongs to $G_i$ for $i=1$ or $2$. Now suppose 
$\phi=\phi_1\phi_2$. Let $\Gamma=Gal(\overline{k}/k)$. Then for any $\sigma\in \Gamma$ we have,
$$\phi=\sigma(\phi)=\phi_1\phi_2\sigma(\phi_1)\sigma(\phi_2).$$
Hence 
$$\phi_1^{-1}\sigma(\phi_1)=\phi_2\sigma(\phi_2)^{-1}\in G_1\cap G_2.$$
But $G_1\cap G_2=\{1\}$, since any automorphism in the intersection must fix both $D_1$ and $D_2$ pointwise and hence must be identity, as $D_1$ and $D_2$ generate $A$. Therefore $\sigma(\phi_i)=\phi_i,~i=1,2$ and hence $\phi_i$ belongs to $G_1$ or $G_2$ as asserted. 
\end{proof}
\section{Concluding Remarks}
This paper raises several questions. For example, it seems likely that every automorphism of an Albert division algebra over $k$ is a 
product of automorphisms of type $A_2$. We have been able to establish this only for irregular automorphisms in general. This result might give a hold on the general case of the Tits-Weiss conjecture, if one can prove that $A_2$-type automorphisms are inner modulo scalar multiplications. 
The study of norm similarities seems possible via the study of automorphisms in general. Skolem-Noether type results might come in handy (as they did in the pure case) in tackling general first constructions. However, there is always an obstruction for Skolem-Noether type extension theorem for isomorphisms of cubic subfields (see \cite{PT}). One could study the $k$-subgroups of ${\bf Str}(A)$ for an Albert algebra and study branching rules for 
the representation on $A$ for these subgroups. This may help in proving the factorization of a norm similarity into simpler ones. Studying fixed points of subgroups and classifying subgroups (defined over $k$) via their fixed point subalgebras is another tool that one could exploit. This is tricky though, 
as we have seen, this approach fails to yield much information on tori. However, one maybe able to do better for semisimple $k$-subgroups of ${\bf Aut}(A)$. Let $T\subset{\bf Aut}(A)$ be a $k$-maximal torus. 
We have shown that $T$ fixes a cubic \'{e}tale subalgebra $L$ of $A$. Hence we get an embedding (over $L$) of $T$ in ${\bf SO}(Q_L)$, where $Q_L$ denotes the Springer form of $L$ (see \cite{PR2}). It maybe of interest to study the decomposition of the $8$-dimensional $L$-representation of $T$ thus obtained and find invariant subspaces over $k$ for various maximal $k$-tori. This may require studying the twisted octonion corresponding to the Springer decomposition of $A$ with respect to $L$ and studying the corresponding automorphism group, which is a twisted $k$-form of $D_4$. Again, by methods of Section 6, it maybe possible to prove $G(k)/R=1$ for an {\it arbitrary} Albert division algebra $A$, $G={\bf Aut}(A)$. Hence it maybe a worthwhile effort to prove generalizations of results of Section 6, at the level of $k$ points.    
\vskip5mm
\noindent
{\bf Acknowledgements}\\
\noindent
I would like to express my deepest gratitude to H. P. Petersson for meticulously reading the preliminary version of this paper and pointing out several errors in that. I thank Richard Weiss, Tom De Medts, B. Sury and Dipendra Prasad for generously sharing their knowledge with me on the subject. 
Katrin Tent and Linus Kramer supported my visit to University of Wuerzburg back in 2004, where this work was conceived, their help is gratefully acknowledged. 
The author thanks the Abdus Salam I.C.T.P., Trieste, for its hospitality on various occasions.
I thank Amit Kulshrestha, Anupam Singh and Shripad Garge for their interest in the work. I would like to thank G. Prasad for supplying the reference \cite{T4} for the result of Wang.  

\vskip5mm

\end{document}